\newcommand{\mz}{\ensuremath{\mathbb Z}}
\newcommand{\mr}{\ensuremath{\mathbb R}}
\newcommand{\mh}{\ensuremath{\mathbb H}}
\newcommand{\mc}{\ensuremath{\mathbb C}}
\newcommand{\shortmod}{\ensuremath{\negthickspace \negthickspace \negthickspace \pmod}}
\newcommand{\half}{\ensuremath{ \frac{1}{2}}}
\newcommand{\intR}{\int_{-\infty}^{\infty}}
\newcommand{\sumstar}{\sideset{}{^*}\sum}
\newcommand{\SL}[2]{SL_{#1}(\mathbb{#2})}
\newcommand{\Sp}[2]{Sp_{#1}(\mathbb{#2})}
\DeclareMathOperator{\vol}{vol}
\theoremstyle{plain}		
	\newtheorem{mytheo}{Theorem} [section]
	\newtheorem{myprop}[mytheo]{Proposition}
	\newtheorem{mycoro}[mytheo]{Corollary}
     \newtheorem{mylemma}[mytheo]{Lemma}
	\newtheorem{myconj}[mytheo]{Conjecture}
\theoremstyle{remark}
\numberwithin{equation}{section}
\begin{document}

\author{Sheng-Chi Liu} 
\address{Department of Mathematics \\
	  Texas A\&M University \\
	  College Station \\
	  TX 77843-3368 \\
		U.S.A.}
\email{scliu@math.tamu.edu}

\author{Matthew P. Young} 
\address{Department of Mathematics \\
	  Texas A\&M University \\
	  College Station \\
	  TX 77843-3368 \\
		U.S.A.}
\email{myoung@math.tamu.edu}
\thanks{This material is based upon work supported by the National Science Foundation under agreement No. DMS-0758235.  Any opinions, findings and conclusions or recommendations expressed in this material are those of the authors and do not necessarily reflect the views of the National Science Foundation.}

\begin{abstract}
We study the analytic behavior of the restriction of a Siegel modular form to $\mh \times \mh$ in the case that the Siegel form is a Saito-Kurokawa lift.  A formula of Ichino links this behavior to a family of $GL_3 \times GL_2$ $L$-functions.
\end{abstract}
\title[Growth and nonvanishing of restricted Siegel modular forms]{Growth and nonvanishing of restricted Siegel modular forms arising as Saito-Kurokawa lifts}
\maketitle
\section{Introduction}
\subsection{The restriction of a Siegel modular form}
Suppose that $F: \mathcal{H}_2 \rightarrow \mc$ is a Siegel modular form of weight $k$ for $Sp_4(\mz)$, where Siegel's upper half space $\mathcal{H}_2$ is defined by
\begin{equation}
 \mathcal{H}_2 = \{ Z \in Mat_{2 \times 2}(\mc): Z = Z^{t} \text{ and Im}(Z) \text{ is positive definite} \}. 
\end{equation}
In coordinates, if $Z = \begin{pmatrix} \tau & z \\ z & \tau' \end{pmatrix}$ then $Z \in \mathcal{H}_2$ means $\tau, \tau' \in \mh$ and $\text{Im}(z)^2 < \text{Im}(\tau) \text{Im}(\tau')$.  Recall that $Sp_4(\mz)$ acts on $\mathcal{H}_2$ via $\gamma = \begin{pmatrix} A & B \\ C & D \end{pmatrix}$ with $\gamma(Z) = (AZ+B)(CZ+D)^{-1}$.  Then $F$ is a Siegel modular form of weight $k$ if $F(\gamma(Z)) = \det(CZ+D)^k F(Z)$ for all $\gamma \in Sp_4(\mz)$ and $Z \in \mathcal{H}_2$ and if $F$ is holomorphic in $\tau, z, \tau'$.  The Koecher principle means that such an $F$ automatically is bounded on a fundamental domain for $Sp_4(\mz) \backslash \mathcal{H}_2$.

In this paper we study the behavior of Siegel modular forms when restricted to $z=0$.  One can easily check that $SL_2(\mz) \times SL_2(\mz)$ embeds into $Sp_4(\mz)$ as follows.  Say $\alpha, \alpha' \in SL_2(\mz)$ with $\alpha = \begin{pmatrix} a & b \\ c & d \end{pmatrix}$, $\alpha' = \begin{pmatrix} a' & b' \\ c' & d '\end{pmatrix} $, then
\begin{equation}
 \gamma = \begin{pmatrix} a & & b & \\ & a' & & b' \\ c & & d & \\ & c' & & d'  \end{pmatrix} \in Sp_4(\mz);
\end{equation}
furthermore, for $Z = \begin{pmatrix} \tau & 0 \\ 0 & \tau' \end{pmatrix}$, 
we have $\gamma(Z) = \begin{pmatrix} \frac{a \tau + b}{c \tau + d} & 0 \\ 0 & \frac{a' \tau' + b'}{c' \tau' + d'} \end{pmatrix}$, and hence $F\begin{pmatrix} \tau & 0 \\ 0 & \tau' \end{pmatrix}$ is a modular form in $\tau$ and in $\tau'$.  It is an interesting question to understand how this restricted function $F|_{z=0}$ behaves.  In particular, we wish to understand when $F|_{z=0}$ vanishes identically, and also to compare the Petersson $L^2$ norm of $F|_{z=0}$ on $SL_2(\mz) \backslash \mh \times SL_2(\mz) \backslash \mh$ to the Petersson norm of $F$ on $Sp_4(\mz) \backslash \mathcal{H}_2$.  This measures in some sense how concentrated $F$ is along $F|_{z=0}$.

By linear algebra, one can write $F\begin{pmatrix} \tau & 0 \\ 0 & \tau' \end{pmatrix} = \sum_{g_1, g_2} c_{g_1, g_2} g_1(\tau) g_2(\tau')$, where $g_1$ and $g_2$ range over an orthonormal basis of weight $k$ modular forms and $c_{g_1, g_2}$ is the projection of $F$ onto $g_1 \times g_2$, with respect to the Petersson inner product.  The size and nonvanishing properties of $F|_{z=0}$ are then encoded by the behavior of these coefficients.  
A beautiful result of Ichino \cite{Ic}
gives a formula for $|c_{g, g}|^2$ as a central value of a $GL_3 \times GL_2$ $L$-function in the case that $F$ arises as a Saito-Kurokawa lift of a Hecke eigenform, and when $g$ is a Hecke eigenform.  Under these assumptions, it is not hard to show that $c_{g_1, g_2} = 0$ unless $g_1 = g_2$ (see \cite{Ic}, Lemma 1.1).  Ichino's work is what drew us to this problem.

\subsection{Saito-Kurokawa lifts}
\label{section:SK}
In order to state Ichino's formula we need to 
review some theory of Saito-Kurokawa lifts. We refer to \cite{EZ} for a comprehensive treatment. In brief summary, there is a chain of isomorphisms between various spaces of modular forms which when combined allows one to construct a Siegel cusp form $F_f$ of even weight $\ell$ from a given weight $2\ell-2$ cusp form $f$ for $SL_2(\mz)$.  The Shimura correspondence as progressed by Kohnen \cite{Kohnen} constructs from $f$ a weight $\ell-\half$ form $h_f$ for $\Gamma_0(4)$ lying in the Kohnen + space. 
If $h_f(\tau) = \sum_{n \geq 3} c(n) e(n \tau)$, then 
the Fourier expansion of $F_f$ can be given explicitly in terms of the $c(n)$'s.  Below we give a more elaborate summary of this correspondence and set notation; we also give the correspondence in the reverse order of above.

Let $\ell$ be an even positive integer.
Let $M_{\ell}(\Sp{4}{ \mz})$ (resp. $S_{\ell}(\Sp{4}{\mz})$) denote the space of Siegel modular forms (resp. cusp forms) of weight $\ell$. 
  Each $F \in M_{\ell}(\Sp{4}{ \mz})$ has the Fourier expansion
$$ F(Z)= \sum_{N \geq 0} A(N) e(\text{tr} NZ),
$$
where the summation runs over positive semi-definite half-integral $2 \times 2$ matrices.
If we write $Z=\begin{pmatrix} \tau & z \\z &\tau' \end{pmatrix}$ 
and
$N= \begin{pmatrix} n& r/2 \\ r/2 & m \end{pmatrix}$ 
with $n, r, m \in \mz, n \geq 0, m \geq 0, r^2 \le 4nm$, then the Fourier expansion of $F$ can be rewritten as
\begin{align}
\label{eq:Ffourier}
 F(\tau, z, \tau') &= \sum_{\substack{n \geq 0, m \geq 0, r \in \mz \\  4nm-r^2 \geq 0}}
                    A(n, r, m) e(n \tau + rz + m \tau') \\
                  &=: \sum_{m=0}^{\infty} \phi_m(\tau, z) e(m \tau'). \label{FJ}
\end{align}
Here $\phi_m$ is a Jacobi form of weight $\ell$ and index $m$ (see \cite{EZ} for these definitions). The expansion (\ref{FJ}) is called the Fourier-Jacobi expansion.

Let $J_{\ell, m}$ denote the space of Jacobi forms of weight $\ell$ and index $m$. Let $M_{\ell}^*(\Sp{4}{ \mz})$ denote the Maass space which is the subspace of $M_{\ell}(\Sp{4}{ \mz})$ with Fourier coefficients satisfying
\begin{align}
 A(n,r,m)= \sum_{d|(n, r, m)}d^{\ell-1} A\left(\frac{nm}{d^2}, \frac{r}{d},1 \right) && (\forall n \geq 0, m \geq 0, r \in \mz).
\end{align}
\begin{mytheo} [\cite{EZ} Theorems 6.1, 6.2, 6.3] \label{Ma}
 The map $F \to \phi_1$ gives a Hecke isomorphism between $M_{\ell}^*(\Sp{4} {\mz})$ and $J_{\ell, 1}$.
Hence $F \in M_{\ell}^*(\Sp{4}{ \mz})$ is completely determined by its first Fourier-Jacobi coefficient $\phi_1.$
\end{mytheo}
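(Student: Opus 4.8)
The plan is to exhibit $F \mapsto \phi_1$ as a Hecke-equivariant linear isomorphism by constructing its inverse and checking compatibilities; I would split the argument into well-definedness, injectivity, surjectivity, and Hecke-equivariance. That $F \mapsto \phi_1$ is well defined with image in $J_{\ell,1}$ is part of the general theory of the Fourier--Jacobi expansion \eqref{FJ}, so nothing new is needed there. For injectivity on the Maass space, write $\phi_1(\tau,z) = \sum_{n,r} c(n,r)\, e(n\tau + rz)$; then $A(N',r',1) = c(N',r')$, and the defining relation of $M_\ell^*(\Sp{4}{\mz})$ exhibits \emph{every} Fourier coefficient $A(n,r,m)$ of $F$ as a finite $\mz$-linear combination of the $c(n',r')$. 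Hence $\phi_1 = 0$ forces $F = 0$.

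The substance is surjectivity, which I would prove by writing down the inverse map. Given $\phi \in J_{\ell,1}$ with coefficients $c(n,r)$, introduce the index-raising operators $V_m \colon J_{\ell,1} \to J_{\ell,m}$ of Eichler--Zagier, defined as an explicit finite sum over integral matrices of determinant $m$ and having the effect
\[
(\phi|V_m)(\tau,z) = \sum_{n,r} \Big( \sum_{d | (n,r,m)} d^{\ell-1}\, c\big( \tfrac{nm}{d^2}, \tfrac{r}{d} \big) \Big)\, e(n\tau + rz)
\]
on Fourier coefficients. One sets $\phi_m := \phi|V_m$ (with $\phi_0$ determined by $\phi$ as in \cite{EZ}) and assembles $F_\phi(\tau,z,\tau') := \sum_{m \geq 0} \phi_m(\tau,z)\, e(m\tau')$. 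Three things must be checked: (i) $\phi_m$ really lies in $J_{\ell,m}$, a transformation-law computation for the sum defining $V_m$ together with the identification of its Fourier coefficients; (ii) the series defining $F_\phi$ converges to a holomorphic function on $\mathcal{H}_2$, which follows from standard bounds on Jacobi-form coefficients; (iii) $F_\phi$ is invariant under $\Sp{4}{\mz}$. For (iii) one uses that $\Sp{4}{\mz}$ is generated by the embedded Jacobi group $SL_2(\mz) \ltimes \mz^2$, the translation $\tau' \mapsto \tau'+1$, and the involution $P$ exchanging $\tau$ and $\tau'$: invariance under the first two is \emph{equivalent} to each $\phi_m$ being a Jacobi form of weight $\ell$ and index $m$ (matching $e(m\tau')$-coefficients turns the $\Sp{4}{\mz}$ automorphy relation into the index-$m$ Jacobi transformation law for $\phi_m$), which is (i); invariance under $P$ amounts, using that $\ell$ is even, to the symmetry $A(n,r,m) = A(m,r,n)$ of the Fourier coefficients, which is manifest from the displayed divisor sum since $d | (n,r,m)$ and $nm/d^2$ are symmetric in $n \leftrightarrow m$. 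Once (i)--(iii) hold, $F_\phi \in M_\ell^*(\Sp{4}{\mz})$ and its first Fourier--Jacobi coefficient is $\phi|V_1 = \phi$, so $F \mapsto \phi_1$ is surjective, hence bijective.

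Finally, for Hecke-equivariance I would compare the explicit action on Fourier coefficients of the Hecke operators $T(p)$ on $M_\ell^*(\Sp{4}{\mz})$, inherited from $M_\ell(\Sp{4}{\mz})$, with the action of the Jacobi Hecke operators on $J_{\ell,1}$, and verify they agree under $F \mapsto \phi_1$; equivalently, one passes through the theta decomposition $\phi = \sum_{\mu \bmod 2} h_\mu\, \theta_{1,\mu}$, which identifies $J_{\ell,1}$ with Kohnen's plus space $M^+_{\ell - 1/2}(\Gamma_0(4))$, and transports the Hecke structure using the transformation law of the index-$1$ Jacobi theta series under $SL_2(\mz)$. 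The main obstacle is the combination of point (i) with this Hecke matching: once one knows that $V_m$ maps $J_{\ell,1}$ into $J_{\ell,m}$ with the stated Fourier coefficients and that the two Hecke actions correspond, the remaining verifications---injectivity, $P$-invariance, holomorphy---are routine. Since all of this is carried out in \cite{EZ}, in the write-up I would simply recall these ingredients and refer there for the details.
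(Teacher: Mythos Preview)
The paper does not supply its own proof of this theorem: it is stated with attribution to \cite[Theorems 6.1, 6.2, 6.3]{EZ} and used as a black box. Your sketch is a faithful outline of the Eichler--Zagier argument and is consistent with the one ingredient the paper does recall later (in the proof of Theorem~\ref{NV1}), namely that the inverse map is $\phi \mapsto \sum_{m \geq 0} (\phi|_{\ell,1}V_m)(\tau,z)\,e(m\tau')$ via the operators $V_m$ of \eqref{Vl}. So there is nothing to compare against beyond that, and your proposal matches both the cited source and the paper's usage.
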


On the other hand, there is an isomorphism between Jacobi forms and modular forms of half-integral weight.
Let $M_{\ell-1/2}^+(\Gamma_0(4))$ denote the Kohnen ``$+$'' space which is the subspace of $M_{\ell-1/2}(\Gamma_0(4))$ consisting of modular forms whose $n$-th Fourier coefficients vanish unless $(-1)^{\ell-1}n \equiv 0,1 \pmod {4}$.
\begin{mytheo} [\cite{EZ} Theorem 5.4] \label{Ja}
 The map
\begin{align} \label{Ja1}
 h(\tau):= \sum_{\substack{n \geq 0 \\ n \equiv 0, 3 \shortmod{4}}} c(n) e(n\tau) \to 
\phi(\tau, z):= \sum_{r^2 \le 4n} c(4n-r^2) e(n\tau + rz)
\end{align}
gives a Hecke isomorphism between  $M_{\ell-1/2}^+(\Gamma_0(4))$ and $J_{\ell, 1}$.
\end{mytheo}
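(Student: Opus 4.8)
The plan is to route through the theta decomposition of index-$1$ Jacobi forms and then match the resulting vector-valued object with the Kohnen plus space by a slash-averaging argument over $\Gamma_0(4)$; this is essentially the Eichler--Zagier argument, which I sketch here.

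First I would write the Fourier expansion $\phi(\tau,z) = \sum_{n,r} C(n,r) e(n\tau + rz)$ of an arbitrary $\phi \in J_{\ell,1}$ and feed in the elliptic transformation law $\phi(\tau, z + \lambda\tau + \mu) = e(-\lambda^2\tau - 2\lambda z)\phi(\tau,z)$ for $\lambda,\mu \in \mz$. Comparing Fourier coefficients yields $C(n,r) = C(n + r\lambda + \lambda^2, r + 2\lambda)$, so $C(n,r)$ depends only on the discriminant $D := 4n - r^2$; note $D \equiv -r^2 \equiv 0, 3 \pmod 4$ and that the parity of $r$ is forced by $D \bmod 4$, so there is genuinely one coefficient per admissible $D$. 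The condition that $\phi$ is a holomorphic (not merely weak) Jacobi form forces $C(n,r) = 0$ unless $D \ge 0$. Setting $c(D) := C(n,r)$ for any representative produces exactly the series $h(\tau) = \sum_{D \ge 0,\ D \equiv 0, 3\, (4)} c(D) e(D\tau)$ appearing in the theorem, with its support condition automatic, and shows immediately that the two assignments in (\ref{Ja1}) are formal inverses of one another.

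The substantive step is the modular transformation law. Here I would use the theta decomposition $\phi(\tau,z) = \sum_{\mu\, (2)} h_\mu(\tau)\theta_\mu(\tau,z)$, where $\theta_\mu(\tau,z) = \sum_{r \equiv \mu\, (2)} e\big(\tfrac{r^2}{4}\tau + rz\big)$ is a theta function of weight $\half$ and $h_\mu(\tau) = \sum_{N \equiv -\mu^2\,(4)} c(N) e\big(\tfrac{N}{4}\tau\big)$. Applying the classical transformation of the $\theta_\mu$ under the generators $\tau \mapsto \tau+1$ and $\tau \mapsto -1/\tau$ of $SL_2(\mz)$ (the second by Poisson summation) together with the transformation law of $\phi$ under the Jacobi group, one deduces that $(h_0,h_1)$ is a vector-valued modular form of weight $\ell - \half$ for the attached Weil representation of $SL_2(\mz)$. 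A standard computation then shows that $h(\tau) = h_0(4\tau) + h_1(4\tau)$ is a scalar-valued modular form of weight $\ell - \half$ on $\Gamma_0(4)$ with the theta multiplier, and the fact that only residues $\mu \bmod 2$ occur is precisely the constraint that places $h$ in the plus space. Reading the computation backwards shows that (\ref{Ja1}) sends a plus-space form to a genuine Jacobi form: the elliptic law is immediate from the shape of the series, and the modular law is the same theta identity in reverse.

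For the Hecke-equivariance I would compare Fourier coefficients: the index-$1$ Jacobi Hecke operator $T(p)$ acts on the $C(n,r)$, hence on the $c(D)$, by a formula that matches term by term Kohnen's formula for the half-integral weight operator $T(p^2)$ on the plus space, and since a form in either space is determined by its Fourier coefficients the bijection intertwines the two Hecke algebras. I expect the main obstacle to be the third step: carrying out the $\theta_\mu$ transformation cleanly under $\tau \mapsto -1/\tau$ and pinning down the exact multiplier system, so that the vector-valued form for the Weil representation corresponds precisely to a scalar form in the plus space and in no larger space --- this is where care is needed with the square root of $\tau$, the eighth roots of unity, and the congruence $(-1)^{\ell-1}D \equiv 0, 1 \pmod 4$.
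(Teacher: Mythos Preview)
The paper does not prove this statement; it simply quotes it as Theorem 5.4 of \cite{EZ}. Your sketch is precisely the Eichler--Zagier argument being cited: the theta decomposition of index-$1$ Jacobi forms, the identification of $(h_0,h_1)$ as a vector-valued form for the Weil representation, and the passage to the scalar plus-space form via $h_0(4\tau)+h_1(4\tau)$. So there is no discrepancy to discuss --- your proposal and the paper's ``proof'' (a citation) point to the same argument, and your outline is correct, including your identification of the delicate point (tracking the multiplier under $\tau\mapsto -1/\tau$ to land exactly in $M_{\ell-1/2}^+(\Gamma_0(4))$ rather than a larger space).
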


Given $f \in M_{2\ell-2}(SL_2(\mz))$ a holomorphic modular form of weight $2\ell-2$, then using the Shimura correspondence, Theorem \ref{Ja}, and Theorem \ref{Ma}, one associates to it a Siegel modular form $F_f \in M_{\ell}^*(Sp(4, \mz))$.  Moreover if $f$ is a cusp form, then $F_f$ is also a cusp form.
We call $F_f$  the Saito-Kurokawa lift of $f$.

\subsection{Explicit $L^2$ norm formula}
\label{section:L2}
In this section, we will state Ichino's formula as well as some consequences of it.

Let $k$ be an odd positive integer. Let $f \in S_{2k}(\SL{2}{\mz})$ be a normalized (first Fourier coefficient equals 1) Hecke eigenform and $h \in S_{k + 1/2}^+ (\Gamma_0(4))$ a Hecke eigenform associated to $f$ by the Shimura correspondence. Let $F_f \in S_{k+1}(\Sp{4} {\mz})$ be the Saito-Kurokawa lift of $f$. 

Let $B_{k+1}$ denote a Hecke basis of $S_{k+1}(\SL{2}{ \mz})$, normalized so that the first Fourier coefficient equals $1$.
For each $g \in B_{k+1}$, the period integral $\left\langle F_f|_{z=0 }, g \times g  \right\rangle$ is given by 
$$\left\langle F_f|_{z=0 }, g \times g  \right\rangle =
\int_{\SL{2}{\mz}\backslash \mh} \int_{\SL{2}{\mz}\backslash \mh}
         F \left( \begin{pmatrix} \tau & \\ & \tau' \end{pmatrix}\right)  
     \overline{g(\tau) g(\tau')} \text{Im}(\tau)^{k+1} \text{Im}(\tau')^{k+1} d \mu(\tau) d\mu(\tau'),
$$
where $d \mu(z) = y^{-2} dx dy$ if $z = x + iy$.
We recall that the Petersson norms of $f, g, h$ are defined by
\begin{gather*}
 \left\langle f,f  \right\rangle  = \int_{\SL{2}{\mz}\backslash \mh} |f(z)|^2 y^{2k} d\mu(z),  
 \qquad
  \left\langle g,g  \right\rangle  = \int_{\SL{2}{\mz}\backslash \mh} |g(z)|^2 y^{k+1}d\mu(z), \\ 
  \left\langle h,h  \right\rangle  = \frac{1}{6}\int_{\Gamma_0(4)\backslash \mh} |h(z)|^2 y^{k+1/2} d\mu(z).
\end{gather*}
We also recall
\begin{equation}
 \langle F, F \rangle = \int_{Sp_4(\mz) \backslash \mathcal{H}_2} |F(Z)|^2 \det(Y)^{k+1} d\mu(Z),
\end{equation}
where $Y = \text{Im}(Z)$ and $d\mu \begin{pmatrix} u + iv & x + iy \\ x+iy & u'+iv' \end{pmatrix} = \frac{du dx du' dv dy dv'}{(vv'-y^2)^3}$ is an invariant measure for $Sp_4(\mz)$.  It is convenient to recall that with the above measures that
\begin{equation}
\label{eq:volumes}
 v_1:=\vol(SL_2(\mz) \backslash \mh) = 2 \pi^{-1} \zeta(2) = \frac{\pi}{3}, \qquad v_2:=\vol(Sp_4(\mz) \backslash \mathcal{H}_2) = 2 \pi^{-3} \zeta(2) \zeta(4) = \frac{\pi^3}{270}.
\end{equation}

Let $\lambda_f(n)$ (resp. $\lambda_g(n)$) denote the $n$-th Hecke eigenvalue of $f$ (resp. $g$), scaled so Deligne's bound gives $|\lambda_f(n)| \le d(n)$. The $L$-function associated to $f$ is given by
$$ L(s,f)= \sum_{n=1}^{\infty} \frac{\lambda_f(n)}{n^s},
$$
and its completed $L$-function satisfies a functional equation under $s \to 1-s$. 
We will normalize  all $L$-functions to have functional equations under $s \to 1-s$ in this paper.

Let $G$ be the cusp form on $GL_3$ which is the symmetric-square lift (Gelbart-Jacquet lift) of $g$ \cite{GJ}, with Fourier coefficients 
$A_{G}(m_1,m_2)$ satisfying
\begin{equation} \label{FSym}
 A_{G}(m_1, m_2)= \sum_{d| (m_1, m_2)} \mu (d) A_{G} \left(\frac{m_1}{d}, 1 \right) 
  A_{G} \left(\frac{m_2}{d}, 1 \right)
\end{equation}
and
\begin{equation}
\label{eq:FSym2}
A_{G}(r,1)= \sum_{ab^2=r} \lambda_g (a^2).
\end{equation}
The $GL_3 \times GL_2$ Rankin-Selberg $L$-function is defined by
\begin{equation}
\label{eq:RSdef}
L(s, \mathrm{sym}^2 g \otimes f)= \sum_{m_1=1}^{\infty}\sum_{m_2=1}^{\infty} \frac{ \lambda_f (m_1) A_{G}(m_1, m_2)}{(m_1 m_2^2)^s}.
\end{equation}
It is entire and satisfies the functional equation \cite{CP} \cite{JPS1} \cite{JPS2}
$$ \Lambda (s, \mathrm{sym}^2 g \otimes f )= \Lambda (1-s, \mathrm{sym}^2 g \otimes f )
$$
where
\begin{multline*}
 \Lambda (s, \mathrm{sym}^2 g \otimes f ) = 2^3 (2\pi)^{-(3s+3k-\tfrac{1}{2})} 
\Gamma \left(s+ 2k-\tfrac{1}{2}\right) \Gamma \left(s+k-\tfrac{1}{2} \right)  
   \Gamma \left(s +\tfrac{1}{2} \right)\\
 \times L(s, \mathrm{sym}^2 g \otimes f). 
\end{multline*}

Now we can state Ichino's result.
\begin{mytheo}[\cite{Ic} Theorem 2.1] \label{I2}
 For each $g \in B_{k+1}$,
\begin{equation}
\Lambda \left( \tfrac{1}{2}, \mathrm{sym}^2 g \otimes f  \right)
= 2^{k+1} \frac{\left\langle f, f \right\rangle }{\left\langle h, h  \right\rangle } 
  \frac{\left| \left\langle F|_{z=0 }, g \times g  \right\rangle \right|^2}
{\left\langle g, g  \right\rangle^2}.
\end{equation}
\end{mytheo}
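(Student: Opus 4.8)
\quad
The plan is to reduce the identity to the evaluation of a single period and then carry out that evaluation by unfolding. Write $F=F_f$. Combining the Maass relations satisfied by $F$ (the defining relations of $M_\ell^*(\Sp{4}{\mz})$) with Theorem~\ref{Ja}, which gives $A(n,r,1)=c(4n-r^2)$ for $h=\sum_n c(n)e(n\tau)$, one finds after a short manipulation of the divisor sum that the $(n,m)$-th Fourier coefficient of $F|_{z=0}$ is
\[
 B(n,m)=\sum_{r\in\mz}A(n,r,m)=\sum_{d\mid(n,m)}d^{k}\,\beta\!\left(\tfrac{nm}{d^{2}}\right),\qquad \beta(N):=\sum_{r\in\mz}c(4N-r^{2}),
\]
with $\beta(N)=0$ for $N\le 0$ by cuspidality of $h$. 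Thus $\Psi(\tau):=\sum_{N\ge1}\beta(N)e(N\tau)=\phi_1(\tau,0)$ lies in $S_{k+1}(\SL{2}{\mz})$. Writing $\Psi=\sum_{g'\in B_{k+1}}\gamma_{g'}g'$, substituting $a_{g'}(n)=\lambda_{g'}(n)n^{k/2}$, and using $\lambda_{g'}(n)\lambda_{g'}(m)=\sum_{d\mid(n,m)}\lambda_{g'}(nm/d^2)$, the divisor sum for $B(n,m)$ collapses to $\sum_{g'}\gamma_{g'}a_{g'}(n)a_{g'}(m)$, so
\[
 F|_{z=0}(\tau,\tau')=\sum_{g'\in B_{k+1}}\gamma_{g'}\,g'(\tau)\,g'(\tau').
\]
(This recovers Lemma~1.1 of \cite{Ic} in the present setting.) Hence $\langle F|_{z=0},g\times g\rangle=\gamma_g\langle g,g\rangle^{2}$ with $\gamma_g=\langle\Psi,g\rangle/\langle g,g\rangle$, and the theorem is equivalent to $\Lambda(\tfrac12,\mathrm{sym}^2 g \otimes f)=2^{k+1}\,(\langle f,f\rangle/\langle h,h\rangle)\,|\langle\Psi,g\rangle|^{2}$.

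It remains to evaluate $\langle\Psi,g\rangle$, and here I would use the theta decomposition of the Jacobi form $\phi_1$: at $z=0$ it reads $\Psi=h_0\vartheta_0+h_1\vartheta_1$, where the $h_\mu$ are the half-integral weight $k+\tfrac12$ components of $h$ in the Kohnen plus space and $\vartheta_\mu(\cdot,0)$ are unary theta series of weight $\tfrac12$ on $\Gamma_0(4)$. Descending from $\SL{2}{\mz}\backslash\mh$ to $\Gamma_0(4)\backslash\mh$ at the cost of the index $[\SL{2}{\mz}:\Gamma_0(4)]=6$ and carrying out a Rankin--Selberg unfolding with the weight-$\tfrac12$ theta series serving as kernel, one converts $\langle\Psi,g\rangle$ into a Dirichlet series in the coefficients $c(4N-r^2)$ weighted by the Hecke eigenvalues $\lambda_g$. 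Performing the sum over $r$ (a quadratic Gauss-sum computation) and then invoking the explicit Shimura--Kohnen correspondence \cite{Kohnen} to pass from the $c$'s to $\lambda_f$-data --- the step through which the ratio $\langle f,f\rangle/\langle h,h\rangle$ enters --- recasts the series as $\sum_{m_1,m_2}\lambda_f(m_1)A_G(m_1,m_2)(m_1 m_2^2)^{-s}$ at $s=\tfrac12$, that is, $L(\tfrac12,\mathrm{sym}^2 g \otimes f)$; the $GL_3$ coefficients of \eqref{FSym}--\eqref{eq:FSym2} appear precisely because the eigenvalues of $g$ enter quadratically, and the archimedean integral over $\mh$ supplies exactly the three $\Gamma$-factors of $\Lambda(s,\mathrm{sym}^2 g \otimes f)$.

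The main obstacle is not the shape of the argument but the bookkeeping of constants needed to land on the clean factor $2^{k+1}$: one has to track the volume factors $v_1,v_2$ of \eqref{eq:volumes}, the index $6$ and the $\tfrac16$ in the definition of $\langle h,h\rangle$, the powers of $2$ and $\pi$ and the three $\Gamma$-factors in the completion $\Lambda(s,\mathrm{sym}^2 g \otimes f)$, the Gauss sums and local densities contributed by the $\vartheta_\mu$, and the precise constant in Kohnen's version of the Shimura correspondence (which is what carries the Petersson norms). A secondary, routine point is justifying the interchange of the sums over $N$ and $r$ with the Petersson integrals, which follows from cuspidality of $F$ and Deligne's bound. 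Once the constants are in hand, assembling the two steps yields the stated identity.
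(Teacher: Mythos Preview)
The paper does not prove this statement at all: Theorem~\ref{I2} is quoted verbatim from Ichino's paper \cite{Ic} (Theorem~2.1 there) and is used as a black box. The only argument in the present paper that touches this theorem is the derivation of Theorem~\ref{NF} \emph{from} it, which assumes the identity and merely combines it with the inner-product formulas of \cite{KS}, \cite{Br}, and the standard expression for $L(1,\mathrm{sym}^2 f)$ in terms of $\langle f,f\rangle$. So there is nothing in this paper to compare your proposal against.

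That said, a few remarks on your sketch as a stand-alone attempt. Your first step, rewriting $F|_{z=0}$ via the Maass relations and the Hecke relation $\lambda_{g'}(n)\lambda_{g'}(m)=\sum_{d\mid(n,m)}\lambda_{g'}(nm/d^2)$ to recover the diagonal decomposition, is correct and is essentially Ichino's Lemma~1.1. Your second step, however, is only a plan: you describe a Rankin--Selberg unfolding with a $\tfrac12$-weight theta kernel and an appeal to the Kohnen--Shimura correspondence, but you do not actually carry out the unfolding, identify the resulting Dirichlet series with \eqref{eq:RSdef}, or match the archimedean integral to the three gamma factors. You explicitly flag the constant $2^{k+1}$ as the main obstacle, and indeed that is where the work lies --- the normalizations in the theta decomposition, the index-$6$ descent, Kohnen's constant, and the completion of $\Lambda$ all interact, and none of this is performed. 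For comparison, Ichino's own argument proceeds through the theta correspondence and a seesaw identity rather than a purely classical unfolding; your route is in principle viable but would require substantially more than what you have written to constitute a proof.
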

Note that Ichino's formula shows the central values of the $L$-functions are nonnegative.  For $F$ and $g$ fixed Ichino's result in some sense gives a complete description of the (modulus squared) projection of $F|_{z=0}$ onto $g \times g$.  It is interesting to ask about the behavior of $F|_{z=0}$ which leads to understanding the average behavior over $g$ in which case one can hope for more refined information.  To this end, we make the following definition:
\begin{equation}
\label{eq:Normdef}
 N(F_f) = 
\frac{\frac{1}{v_1^2} \langle F_f|_{z=0}, F_f|_{z=0} \rangle}{\frac{1}{v_2} \langle F_f, F_f \rangle},
\end{equation}
where the inner product in the numerator is the product of Petersson inner products on $SL_2(\mz) \backslash \mh \times SL_2(\mz) \backslash \mh$.  
One can see from the Fourier expansion \eqref{eq:Ffourier} that $F_f|_{z=0}$ is a cusp form so that the inner product in the numerator is finite.
Clearly, $F_f|_{z=0}$ is identically zero if and only if $N(F_f) = 0$.
We divide by the volumes so that in practice we are comparing probability measures; we thank Akshay Venkatesh for this excellent suggestion. 

Using Ichino's formula as well as some nice simplifications we derive the following:
\begin{mytheo} \label{NF}
With the same notation as in this section, we have
\begin{equation}
\label{eq:NF}
  N(F_f) = \frac{v_1^{-2}}{v_2^{-1}} \frac{24 \pi }{L(3/2, f) L(1, \mathrm{sym}^2f)} 
   \sum_{g \in B_{k+1}} \frac{1}{k} L\left( \tfrac{1}{2}, \mathrm{sym}^2 g \otimes f  \right).
\end{equation}
\end{mytheo}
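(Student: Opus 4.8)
The plan is to evaluate the numerator and the denominator of $N(F_f)$ in \eqref{eq:Normdef} separately. The numerator is controlled by Ichino's formula after a spectral expansion, while the denominator requires an explicit formula for the Petersson norm $\langle F_f,F_f\rangle$ of the Saito--Kurokawa lift; assembling the two should yield \eqref{eq:NF}, with the volume ratio of \eqref{eq:volumes} simply factoring out at the end.

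First I would expand the restriction $F_f|_{z=0}$, which is a cusp form on $SL_2(\mz)\backslash\mh\times SL_2(\mz)\backslash\mh$, in the orthonormal basis $\{g/\langle g,g\rangle^{1/2}:g\in B_{k+1}\}$. Since $F_f$ is a Saito--Kurokawa lift and the $g$ are Hecke eigenforms, Lemma 1.1 of \cite{Ic} (recalled in the introduction) shows that only the diagonal terms $g\times g$ contribute, so Parseval gives
\begin{equation*}
\langle F_f|_{z=0}, F_f|_{z=0}\rangle = \sum_{g\in B_{k+1}} \frac{\bigl|\langle F_f|_{z=0}, g\times g\rangle\bigr|^2}{\langle g,g\rangle^2}.
\end{equation*}
Theorem \ref{I2} then replaces the $g$-th summand by $\tfrac{\langle h,h\rangle}{2^{k+1}\langle f,f\rangle}\,\Lambda(\thalf,\mathrm{sym}^2 g\otimes f)$. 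Substituting the archimedean factor at $s=\thalf$, where the three $\Gamma$-factors become $\Gamma(2k)$, $\Gamma(k)$, $\Gamma(1)=1$ and the exponential factor becomes $2^3(2\pi)^{-(3k+1)}$, turns the completed $L$-value into $8(2\pi)^{-(3k+1)}\Gamma(2k)\Gamma(k)\,L(\thalf,\mathrm{sym}^2 g\otimes f)$. At this stage the numerator of $N(F_f)$ is an explicit elementary constant in $k$, times $\langle h,h\rangle/\langle f,f\rangle$, times $\sum_{g\in B_{k+1}}L(\thalf,\mathrm{sym}^2 g\otimes f)$.

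For the denominator I would use two further inputs. The first is the classical Rankin--Selberg identity writing $\langle f,f\rangle$ as an elementary factor (powers of $2$ and $\pi$, a factor $\Gamma(2k)$, and $\zeta(2)$) times $L(1,\mathrm{sym}^2 f)$; this is where the $L(1,\mathrm{sym}^2 f)$ of \eqref{eq:NF} originates. The second, and the substantial one, is a closed formula for $\langle F_f,F_f\rangle$, which I would obtain by tracking the Petersson norm through the chain of Hecke isomorphisms of Theorems \ref{Ma} and \ref{Ja} and the Shimura--Kohnen correspondence recalled in Section \ref{section:SK} (equivalently via a Rankin--Selberg/doubling computation for $F_f$, or Kohnen--Skoruppa's Dirichlet series attached to it); this should identify $\langle F_f,F_f\rangle$ as an explicit elementary factor ($\Gamma$-values and powers of $2$ and $\pi$) times $\langle h,h\rangle\,L(\tfrac{3}{2},f)$, and is the source of the $L(\tfrac{3}{2},f)$ in \eqref{eq:NF}. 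Since $\langle h,h\rangle$ occurs linearly here it cancels exactly against the $\langle h,h\rangle$ produced by Theorem \ref{I2}; this is why no Petersson norms survive in \eqref{eq:NF}, and it makes $N(F_f)$ manifestly independent of the normalization of $h$ (hence of $F_f$), as it must be.

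Finally I would form the quotient. The volume ratio $v_1^{-2}/v_2^{-1}$ factors out and carries over verbatim to \eqref{eq:NF}, so it remains to show that $\langle F_f|_{z=0}, F_f|_{z=0}\rangle/\langle F_f,F_f\rangle$ equals $\tfrac{24\pi}{k\,L(3/2,f)L(1,\mathrm{sym}^2 f)}\sum_{g}L(\thalf,\mathrm{sym}^2 g\otimes f)$. Here the $\langle h,h\rangle$'s cancel, the factor $\langle f,f\rangle$ is converted into $1/L(1,\mathrm{sym}^2 f)$ via Rankin--Selberg, the two copies of $\Gamma(2k)$ (from the completed Rankin--Selberg $L$-function and from the norm of $f$) cancel, the remaining $\Gamma$-factors combine to leave a single factor $1/k$, and the surviving powers of $2$, $\pi$ (and $\zeta(2)$) should collapse to $24\pi$. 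I expect the main obstacle to be precisely this bookkeeping --- in particular pinning down the exact constant in $\langle F_f,F_f\rangle$, since it has to be transported faithfully through several correspondences, each carrying its own normalization convention, and any slip there propagates directly into the constant $24\pi/k$ of \eqref{eq:NF}. Everything else in the argument is formal.
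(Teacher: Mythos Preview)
Your proposal is correct and follows essentially the same route as the paper: expand $F_f|_{z=0}$ diagonally via \cite[Lemma 1.1]{Ic}, apply Theorem \ref{I2}, convert $\langle f,f\rangle$ to $L(1,\mathrm{sym}^2 f)$ by Rankin--Selberg, and use the Kohnen--Skoruppa/Brown formula for $\langle h,h\rangle/\langle F_f,F_f\rangle$ (which is exactly the ``closed formula for $\langle F_f,F_f\rangle$'' you anticipate). The paper simply quotes \cite{KS} and \cite{Br} for that last ratio rather than re-deriving it through the chain of isomorphisms, but otherwise the bookkeeping you outline matches theirs step for step.
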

This derivation occurs at the end of this subsection.

It is pleasant to consider some small weight examples.  For $k+1=10$ there exists a cusp form $F_f$, commonly denoted $\chi_{10}$ \cite{Kurokawa} which vanishes along $z=0$.  This can be seen directly by its Fourier expansion but is also obvious from the right hand side of \eqref{eq:NF} since the sum over $g$ is empty.  For the same reason, when $k+1=14$ there is a form $\chi_{14}$ in the Maass space, but there are no $g$'s and $N(\chi_{14}) = 0$.  For other small values of $k$ we numerically computed $N(F_f)$ obtaining Table \ref{table:norms} where the labels ``a'' and ``b'' distinguish two forms of the particular weight.  
\begin{table}
\label{table:norms}
\caption{The norm for small weights}
\begin{tabular}{c|ccccccc}
$ k $ & $12$ & $16a$ & $16b$ & $18a$ & $18b$ & $20a$ & $20b$ \\
\hline
$\approx N(F_f) $ & $.83$ & $.64$ & $.49$ & $.043$ & $1.2$ & $.88$ & $.44$
\end{tabular}
\end{table}
It would be good to extend the table to larger weights, and to compute the norm more accurately; the second displayed digit may not be correct though it seems the calculation has stabilized enough to detect the first digit.
Based on these calculations as well as other heuristics (see Conjecture \ref{conj:N2} and Theorem \ref{thm:twistnonvanish} below) we conjecture that $N(F_f) \neq 0$ for $k+1 \geq 16$.  It would be interesting (and surprising) to find $f$ and $g$ such that $L(1/2, \mathrm{sym}^2 g \otimes f) = 0$.

Using the convexity bound $L\left( \frac{1}{2}, \mathrm{sym}^2 g \otimes f  \right) \ll k$, the well-known bound \cite{HL}
\begin{equation} \label{sym}
 (\log{k})^{-1} \ll L(1, \mathrm{sym}^2f)  
\end{equation}
and the ``trivial'' bound
\begin{align} \label{invf32}
 L \left(\tfrac{3}{2}, f \right)^{-1}  \le \prod_{p} (1+ p^{-3/2})^2,
\end{align}
we deduce the following corollary:
\begin{mycoro} 
\label{coro:convexitybound}
With the same notation as in this section, we have
 $$ N(F_f) \ll k \log{k}.
$$
\end{mycoro}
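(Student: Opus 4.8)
The plan is to read the estimate straight off formula \eqref{eq:NF} of Theorem~\ref{NF}, feeding in the three displayed bounds together with the classical dimension count for $S_{k+1}(\SL{2}{\mz})$; no new analytic input is required. First I would observe that the prefactor $\tfrac{v_1^{-2}}{v_2^{-1}}\cdot 24\pi$ in \eqref{eq:NF} is an absolute constant by \eqref{eq:volumes} (it equals $\tfrac{4\pi^2}{5}$), so it can be absorbed into the implied constant.

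Next I would dispose of the denominator. The ``trivial'' bound \eqref{invf32} gives $L(3/2,f)^{-1}\ll 1$ with an absolute implied constant, since the Euler product $\prod_p(1+p^{-3/2})^2$ converges, while the Hoffstein--Lockhart bound \eqref{sym} gives $L(1,\mathrm{sym}^2 f)^{-1}\ll\log k$. Hence $\big(L(3/2,f)\,L(1,\mathrm{sym}^2 f)\big)^{-1}\ll\log k$.

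It then remains only to bound $\sum_{g\in B_{k+1}}\tfrac1k L\big(\tfrac12,\mathrm{sym}^2 g\otimes f\big)$. Two facts make this immediate: every summand is nonnegative, since Ichino's formula (Theorem~\ref{I2}) exhibits each central value as a nonnegative multiple of $\big|\langle F|_{z=0},g\times g\rangle\big|^2$; and each value satisfies the convexity bound $L\big(\tfrac12,\mathrm{sym}^2 g\otimes f\big)\ll k$ quoted above, which is read off from the gamma factors of $\Lambda(s,\mathrm{sym}^2 g\otimes f)$ recorded in Section~\ref{section:L2} (shifts of size $2k$, $k$, and $1$, hence analytic conductor $\asymp k^4$). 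Since $\#B_{k+1}=\dim S_{k+1}(\SL{2}{\mz})\ll k$, the sum is $\ll \tfrac1k\cdot k\cdot k=k$. Multiplying the three contributions gives $N(F_f)\ll k\log k$.

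There is no genuine obstacle here: once Theorem~\ref{NF} is in hand the corollary is a bookkeeping exercise, and the only step meriting a moment's attention is verifying that the convexity bound for this fixed $GL_3\times GL_2$ family really is of size $k$ (equivalently, that its analytic conductor is of size $k^4$), which follows at once from the explicit functional equation stated above together with the fact that $f$ and $g$ have archimedean parameters of size $O(k)$.
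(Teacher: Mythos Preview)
Your argument is correct and is essentially identical to the paper's: the corollary is stated immediately after the three inputs \eqref{sym}, \eqref{invf32}, and the convexity bound $L(\tfrac12,\mathrm{sym}^2 g\otimes f)\ll k$, and your proof simply spells out the bookkeeping (absolute prefactor, $\dim S_{k+1}\ll k$, and multiplication of the three estimates). The one superfluous ingredient is the appeal to nonnegativity via Ichino's formula --- since you bound each summand individually by the convexity bound, the triangle inequality already suffices and the sign of the central values plays no role.
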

The Lindel\"{o}f hypothesis implies $N(F_f) \ll k^{\varepsilon}$ but in fact we have a much more refined conjecture:
\begin{myconj}
\label{conj:N2}
 As $k \to \infty$, we have
$$N(F_f) \sim 2. $$
\end{myconj}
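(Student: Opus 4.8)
\emph{Sketch of an approach.}
By Theorem~\ref{NF} together with the elementary evaluation $\tfrac{v_1^{-2}}{v_2^{-1}}\cdot 24\pi=\tfrac{4\pi^2}{5}$ coming from \eqref{eq:volumes}, Conjecture~\ref{conj:N2} is equivalent to the first-moment asymptotic
\[
 M_f(k):=\sum_{g\in B_{k+1}}\frac{1}{k}\,L\!\left(\tfrac12,\mathrm{sym}^2 g\otimes f\right)\;\sim\;\frac{5}{2\pi^2}\,L\!\left(\tfrac32,f\right)L\!\left(1,\mathrm{sym}^2 f\right),\qquad k\to\infty,
\]
uniformly over normalized Hecke eigenforms $f\in S_{2k}(\SL{2}{\mz})$. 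Thus the whole problem is to evaluate this average of $GL_3\times GL_2$ central values over the weight-aspect family $B_{k+1}$.

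The plan is to attack $M_f(k)$ by the moment method. First one inserts the approximate functional equation for $L(\tfrac12,\mathrm{sym}^2 g\otimes f)$: since this $L$-function is self-dual with root number $+1$ and Gamma factor $\Gamma(s+2k-\tfrac12)\Gamma(s+k-\tfrac12)\Gamma(s+\tfrac12)$, the central value becomes $2\sum_{m_1,m_2}\frac{\lambda_f(m_1)A_{G}(m_1,m_2)}{(m_1m_2^2)^{1/2}}\,V_k(m_1m_2^2)$ for a fixed smooth weight $V_k$ that decays once $m_1m_2^2\gg k^{2}$, the analytic conductor being $\asymp k^{4}$. Next one converts the sum $\sum_g\frac1k(\cdots)$ into a harmonic average $\sum_g\omega_g(\cdots)$ with $\omega_g=\frac{\Gamma(k)}{(4\pi)^{k}\langle g,g\rangle}$; since $\langle g,g\rangle\asymp\frac{\Gamma(k+1)}{(4\pi)^{k+1}}L(1,\mathrm{sym}^2 g)$ by Rankin--Selberg, this inserts an extra factor $L(1,\mathrm{sym}^2 g)$, which one expands into its (smoothly truncated) Dirichlet series $\sum_n A_{G}(n,1)\,n^{-1}$, or equivalently absorbs into a $\langle g,g\rangle^{-1}$-weighted trace formula. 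Finally one opens every symmetric-square coefficient using \eqref{FSym}--\eqref{eq:FSym2} and the Hecke multiplicativity of $\lambda_g$, reducing $M_f(k)$ to a manageable linear combination of sums $\sum_{g\in B_{k+1}}\omega_g\,\lambda_g(n_1)\lambda_g(n_2)$ weighted by $\lambda_f$-twisted arithmetic factors, to which the Petersson trace formula applies.

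The diagonal contribution $\delta_{n_1=n_2}$ should give the predicted main term. Here the resulting arithmetic sum factors as an Euler product, and a somewhat delicate prime-by-prime computation of the ``$\lambda_g(1)$-coefficients'' of the relevant products of Hecke eigenvalues should produce exactly $\tfrac{5}{2\pi^2}L(\tfrac32,f)L(1,\mathrm{sym}^2 f)$: the factor $L(1,\mathrm{sym}^2 f)$ arising from the square-indexed part of the $\lambda_f$-sum (via $\sum_b\lambda_f(b^2)b^{-1}=L(1,\mathrm{sym}^2 f)/\zeta(2)$) and the factor $L(\tfrac32,f)$ from the remaining part, the extra $L(1,\mathrm{sym}^2 g)$ factor introduced above playing an essential role in that bookkeeping; the global factor $2$ from the approximate functional equation is what turns the expected value into $N(F_f)\sim 2$ rather than $\sim 1$. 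Checking that this constant is \emph{exactly} $\tfrac{5}{2\pi^2}$ is routine but necessary to pin the limit down to the clean value $2$.

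The main obstacle is the off-diagonal. The family $B_{k+1}$ has size only $\asymp k$, while the conductor of $\mathrm{sym}^2 g\otimes f$ is $\asymp k^{4}$, so the family lies far below the square root of the conductor and the Kloosterman terms of the Petersson formula are \emph{not} automatically negligible. Concretely, after opening the coefficients one meets sums $\sum_{c\ge1}c^{-1}S(n_1,n_2;c)\,J_{k}\!\left(4\pi\sqrt{n_1n_2}/c\right)$ in which $\sqrt{n_1n_2}$ can be as large as $\asymp k^{3/2}$, so the Bessel factor is already in its oscillatory range for $c=1$; each such term is only of size $\asymp k^{-3/4}$, but there are very many of them, and extracting the required saving appears to demand both the oscillation of $J_k$ near its transition point and genuine cancellation in sums of Kloosterman sums twisted by $\lambda_f$ and by the symmetric-square coefficients --- in effect a subconvexity-strength input in the $k$-aspect that is currently out of reach. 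For this reason I expect Conjecture~\ref{conj:N2} to remain conditional; note in particular that the Lindel\"of Hypothesis yields only the upper bound $N(F_f)\ll k^{\varepsilon}$, not the asymptotic constant, because bounding each central value individually does not show that the off-diagonal has strictly smaller order than the diagonal. A realistic unconditional goal is instead the lower bound $N(F_f)\gg 1$ via a mollified first moment.
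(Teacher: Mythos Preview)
Your assessment is correct: this is a conjecture, and the paper does not prove it either. What the paper does (in Section~\ref{section:conjecture}) is derive the predicted constant $2$ via the CFKRS recipe, which is precisely the heuristic you describe: write $N(F_f)$ in the Petersson-weighted form \eqref{eq:NformulaPetersson}, open $L(1,\mathrm{sym}^2 g)$ and $L(\tfrac12+\alpha,\mathrm{sym}^2 g\otimes f)$ as Dirichlet series, average over $g$ keeping only the diagonal, and compute the resulting Euler product. So your approach and the paper's are the same in spirit.

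Two remarks on the comparison. First, your informal explanation of where $L(1,\mathrm{sym}^2 f)$ and $L(3/2,f)$ come from is plausible but loose; the paper instead computes the local factor $B_{f,p}(\alpha)$ of the diagonal sum directly as a rational function of $p^{-1/2-\alpha}$ and the Satake parameters, obtaining
\[
A_f(\alpha)=L(1+2\alpha,\mathrm{sym}^2 f)\,L(3/2+3\alpha,f)\,\frac{\zeta(2+4\alpha)^2}{\zeta(4+8\alpha)},
\]
and then checks that $2c_f\,A_f(0)=2$ exactly. That explicit Euler product is what pins the constant down; your sketch does not quite reach it. Second, your diagnosis of the obstacle is the same as the paper's: the family $B_{k+1}$ has size $\asymp k$ while the conductor is $\asymp k^4$, so the Kloosterman side of Petersson is genuinely in play. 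The paper remarks that even the degenerate (Eisenstein) version of this moment is the open second-moment problem for symmetric-square $L$-functions in the weight aspect, and instead proves only the further average over $f$ and over the weight (Theorem~\ref{thm:averageweight}). Your suggested ``realistic unconditional goal'' of $N(F_f)\gg 1$ for individual $f$ via mollification would already go beyond what the paper establishes.
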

This is a special case of the conjectures of \cite{CFKRS}, though it takes a little work to derive this particular answer from their conjecture, which we discuss in Section \ref{section:conjecture}.
It is a challenging problem to unconditionally prove Conjecture \ref{conj:N2}; to help measure the diffculty, one can consider the mean-value problem with $f$ replaced by an Eisenstein series (which past experience indicates should be an easier problem) leading to the problem of estimating the second moment of symmetric-square $L$-functions in the weight aspect, which is listed as an open problem in \cite[Conjecture 1.2]{Khan}.  We can prove Conjecture \ref{conj:N2} on average over $f$ and the weight $k$ (of course $f$ depends on the weight so we cannot try to fix $f$ and vary the weight):

\begin{mytheo} 
\label{thm:averageweight}
Suppose that $w$ is a function satisfying
\begin{equation}
\label{eq:wprop}
\begin{cases}
w \text{ is smooth with compact support on $[K, 2K]$} \\
|w^{(j)}(x)| \leq C_j K^{-j} \text{ for some $C_j > 0$}, \quad j=0,1,2, \dots.
\end{cases}
\end{equation}
Then as $K \rightarrow \infty$, 
\begin{equation}
   \sum_{k \ odd } w(k) \sum_{f \in B_{2k}} N(F_f) \sim  \sum_{k \ odd } w(k) \sum_{f \in B_{2k}} 2.
\end{equation}
\end{mytheo}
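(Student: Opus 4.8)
The plan is to insert the formula of Theorem~\ref{NF}, to trade the natural averages over $f$ and over $g$ for Petersson‑weighted averages by means of Rankin's formula, and then to evaluate the resulting first moment of the central values $L(\tfrac12,\mathrm{sym}^2 g\otimes f)$; the diagonal will produce the value $2$ per form, and the off‑diagonal will be absorbed using the additional average over the weight.

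First I would substitute \eqref{eq:NF}. Rankin's formula writes $\langle f,f\rangle$, resp.\ $\langle g,g\rangle$, as an explicit factorial‑and‑power‑of‑$\pi$ multiple of $L(1,\mathrm{sym}^2 f)$, resp.\ $L(1,\mathrm{sym}^2 g)$; hence $L(1,\mathrm{sym}^2 f)^{-1}$ equals an explicit constant times $k$ times the harmonic weight $\omega_f=\Gamma(2k-1)(4\pi)^{1-2k}\langle f,f\rangle^{-1}$, and $\sum_{g\in B_{k+1}}L(\tfrac12,\mathrm{sym}^2 g\otimes f)$ equals an explicit constant times $k$ times $\sum_g\omega_g\,L(1,\mathrm{sym}^2 g)\,L(\tfrac12,\mathrm{sym}^2 g\otimes f)$. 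Collecting constants (and using $v_1=\pi/3$, $v_2=\pi^3/270$ from \eqref{eq:volumes}), the theorem reduces to the asymptotic evaluation, as $K\to\infty$, of
\begin{multline*}
\sum_{k\text{ odd}}k\,w(k)\sum_{f\in B_{2k}}\frac{\omega_f}{L(3/2,f)}\\
\times\sum_{g\in B_{k+1}}\omega_g\,L(1,\mathrm{sym}^2 g)\,L\!\left(\tfrac12,\mathrm{sym}^2 g\otimes f\right).
\end{multline*}

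I would then open the central value by its approximate functional equation. Since $\Lambda(s,\mathrm{sym}^2 g\otimes f)=\Lambda(1-s,\mathrm{sym}^2 g\otimes f)$ and $\mathrm{sym}^2 g\otimes f$ is self‑dual, this gives
\[
L\!\left(\tfrac12,\mathrm{sym}^2 g\otimes f\right)=2\sum_{m_1,m_2}\frac{\lambda_f(m_1)A_{G}(m_1,m_2)}{(m_1m_2^2)^{1/2}}\,V\!\left(\frac{m_1m_2^2}{k^{2}}\right),
\]
where the analytic conductor is $\asymp k^{4}$, so the rapidly decaying $V$ effectively restricts $m_1m_2^2\ll k^{2+\varepsilon}$ (the positivity of the central value, which Theorem~\ref{I2} guarantees, matches the root number $+1$). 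I would expand the remaining $\lambda$‑twisted factors as Dirichlet series: $L(3/2,f)^{-1}$ converges absolutely in the $\lambda_f(\cdot)$, and $L(1,\mathrm{sym}^2 g)$ is treated by a smooth truncation of length $\asymp k^{1+\varepsilon}$ in the $A_G(1,\cdot)$; using \eqref{FSym}, \eqref{eq:FSym2} and the Hecke relations, everything is written through $\lambda_f(\cdot)$ and $\lambda_g(\cdot^{2})$. Now apply the Petersson formula to the $g$‑sum, splitting it into a diagonal and an off‑diagonal (Kloosterman--Bessel) term. The diagonal factors as an Euler product in the arithmetic variables; one finds that, as $k\to\infty$ so the cutoffs disappear, the $g$‑diagonal yields a main term equal --- up to an explicit constant --- to $L(3/2,f)L(1,\mathrm{sym}^2 f)$. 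Its $f$‑average is therefore computed directly by Rankin's formula, with no further Petersson step: $\sum_{f}\omega_f L(3/2,f)L(1,\mathrm{sym}^2 f)/L(3/2,f)=(\text{const}_k)\dim S_{2k}$. A bookkeeping computation with the local factors, the constants from Rankin's formula and Theorem~\ref{I2}, and the volumes in \eqref{eq:volumes}, then shows this total main term equals $\sum_{k\text{ odd}}w(k)\sum_{f\in B_{2k}}2$, the asserted right‑hand side. (One checks in particular that the vanishing $L(\tfrac12,f)=0$, forced because $k$ is odd, does not enter the diagonal.)

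The main obstacle will be the off‑diagonal, i.e.\ showing that the $g$‑off‑diagonal term, summed over $f$ (which now requires a further Petersson step in $f$) and over $k$, contributes $o(K^{2})$. Because the analytic conductor $\asymp k^{4}$ of $L(\tfrac12,\mathrm{sym}^2 g\otimes f)$ is a positive power of $k$ larger than the size $\asymp k^{2}$ of the family $\{(f,g)\}$ at a fixed weight, the off‑diagonal is not negligible at a fixed weight: the Bessel functions $J_{k}$ and $J_{2k-1}$ produced by the two Petersson formulas land, over part of the ranges of the arithmetic variables, in the transition regime (argument comparable to order), where they are only of size $k^{-1/3}$. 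This is exactly why one must average over the weight and cannot instead fix $f$ and vary $k$. The remedy is to carry out the $k$‑sum first and estimate sums of the shape $\sum_{k\text{ odd}}k\,w(k)\,i^{-k}J_{k}(x)$ --- and products of two such, for the terms off‑diagonal in both $f$ and $g$ --- by Poisson summation / stationary phase in the order $k$: away from $x\asymp k$ these decay faster than any power of $K$, which, together with Weil's bound $S(m,n;c)\ll c^{1/2+\varepsilon}(m,n,c)^{1/2}$, the rapid decay of $V$, and the smallness of $J_\nu(x)$ for $x\ll\nu$, controls the bulk; the thin transition ranges are then handled using the pointwise amplitude bound for $J_\nu$ together with the resulting shortening of the $c$‑ and $m$‑sums. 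Assembling the diagonal main term with these negligible off‑diagonal contributions proves the theorem.
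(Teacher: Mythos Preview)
Your overall architecture (insert Theorem~\ref{NF}, convert to harmonic weights, open the central value with an approximate functional equation, apply Petersson in $f$ and $g$, then exploit the extra $k$-average) is the same as the paper's. The diagonal computation you sketch is also essentially what the paper does in Section~\ref{section:conjecture} and Section~\ref{section:extraweights}. The gap is in the off-diagonal.

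Your description of the off-diagonal says: perform Poisson/stationary phase in $k$ on $\sum_{k\text{ odd}} i^{-k} w(k) J_k(x)$ (and products of two such), get rapid decay away from $x\asymp k$, and in the transition range use $J_\nu\ll\nu^{-1/3}$, Weil, and the shortening of the $c$- and $m$-sums. This is not enough. The hardest term has both Petersson formulas off-diagonal, producing
\[
\sum_{n\ll K^{2}} n^{-1/2}\sum_{c_1,c_2}\frac{S(n^2,1;c_1)S(n,1;c_2)}{c_1c_2}\sum_{k\text{ odd}} i^k w(k) J_k\Big(\tfrac{4\pi n}{c_1}\Big)J_{2k-1}\Big(\tfrac{4\pi\sqrt n}{c_2}\Big),
\]
and the exponential smallness of the Bessel functions only forces $c_1\ll n/K\ll K$ and $c_2\ll 1$. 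In this range the $k$-sum is \emph{not} small: the paper's Theorem~\ref{thm:Salphabeta} shows
\[
\sum_{k\text{ odd}} i^k w(k) J_k(4\pi\alpha)J_{2k-1}(4\pi\beta)\approx \alpha^{-1/2}\,e\!\left(\pm\Big(2\alpha+\tfrac{\beta^2}{4\alpha}\Big)\right),
\]
an \emph{oscillatory} main term of size $(c_1/n)^{1/2}$, not a term that decays outside a thin window. If at this point you only apply Weil to $S(n^2,1;c_1)$, the resulting bound is $O(K)$, i.e.\ the same order as the main term (see the paper's sketch in Section~\ref{section:sketch}). The extra saving comes from two further steps you do not mention: a Poisson summation in $n$ modulo $c_1c_2$ (only the zero frequency survives), and then the explicit evaluation
\[
\sum_{r\bmod c}S(r^2,1;c)\,e\!\left(\tfrac{2r}{c}\right)=\sideset{}{^*}\sum_{h\bmod c}\sum_{r\bmod c}e\!\left(\tfrac{hr^2}{c}\right)
=\begin{cases}\phi(c)\sqrt{c},& c=\square,\\ 0,&\text{otherwise},\end{cases}
\]
which restricts $c_1$ to squares and produces the $K^{1/2}$ saving. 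Without this arithmetic step your off-diagonal bound stalls at the size of the main term.

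A secondary point: your plan to insert $L(1,\mathrm{sym}^2 g)$ as a Dirichlet polynomial of length $k^{1+\varepsilon}$ is not what the paper does and is risky. The paper first proves the unweighted version (Theorem~\ref{thm:N*}) with a genuine power saving, and only afterwards reinserts $L(1,\mathrm{sym}^2 g)$ via a zero-density estimate that approximates it by a \emph{very short} sum of length $K^{\delta}$; the few exceptional $g$ are handled using the nonnegativity $L(\tfrac12,\mathrm{sym}^2 g\otimes f)\geq 0$ together with convexity. Carrying an extra $\lambda_g(r^2)$ of length $k^{1+\varepsilon}$ through the Petersson/Kloosterman analysis would modify the diagonal condition and the shape of the off-diagonal in ways you have not addressed.
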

We sketch a proof of Theorem \ref{thm:averageweight} in Section \ref{section:sketch} while the full proof takes up the majority of the paper.

\begin{mycoro}
\label{coro:upperlower}
 The number of pairs $(f,k)$ with $f \in B_{2k}$ and $k$ odd with $K < k \leq 2K$ and $N(F_f) > A$ is at most $O(K^2/A)$ (meaning ``almost all'' $f$'s have $N(F_f) \leq k^{\varepsilon}$).  Furthermore, the number of pairs $(f,k)$ with $f \in B_{2k}$ and $k$ odd with $K < k \leq 2K$ and $N(F_f) > 1$ (in particular, not close to zero) is $\gg K/\log{K}$.
\end{mycoro}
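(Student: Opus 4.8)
The plan is to derive both parts from the first‑moment asymptotic in Theorem \ref{thm:averageweight}, combined with the nonnegativity of $N(F_f)$ for the first part and with the pointwise bound of Corollary \ref{coro:convexitybound} for the second.

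First I would record that $N(F_f)\ge 0$: by the definition \eqref{eq:Normdef} the numerator and the denominator are products of squared Petersson norms, and the denominator is strictly positive since the Saito--Kurokawa lift $F_f$ is nonzero (equivalently, nonnegativity follows from Theorem \ref{NF} and the fact that the central values in Ichino's formula are nonnegative). Next I would pass from the smoothly weighted sum of Theorem \ref{thm:averageweight} to a sharp cutoff over $K<k\le 2K$. Since $\#B_{2k}=\dim S_{2k}(\SL{2}{\mz})\asymp k$, applying Theorem \ref{thm:averageweight} with a nonnegative weight $w$ as in \eqref{eq:wprop} that equals $1$ on a fixed sub‑interval of positive proportion gives $\sum_k w(k)\sum_{f\in B_{2k}} N(F_f)\sim 2\sum_k w(k)\#B_{2k}\asymp K^2$; because $0\le w\le 1$, this immediately yields the lower bound $\sum_{K<k\le 2K,\ k\ \mathrm{odd}}\sum_{f\in B_{2k}} N(F_f)\gg K^2$, and covering $(K,2K]$ by the interiors of $O(1)$ rescaled such bumps gives the matching upper bound $\ll K^2$. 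For the first assertion, Markov's inequality applied to the nonnegative quantity $N(F_f)$ shows that the number of admissible pairs with $N(F_f)>A$ is at most $A^{-1}$ times this first moment, hence $O(K^2/A)$; taking $A=K^\varepsilon$ gives the parenthetical statement that all but $O(K^{2-\varepsilon})$ of the pairs have $N(F_f)\le k^\varepsilon$.

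For the second assertion I would split the first moment according to the size of $N(F_f)$. Fix a smooth weight $w$ as in \eqref{eq:wprop} with $0\le w\le 1$, support strictly inside $(K,2K)$, and $w\equiv 1$ on a sub‑interval of positive proportion, and set $M=\sum_k w(k)\#B_{2k}\asymp K^2$; by Theorem \ref{thm:averageweight} the weighted first moment equals $(2+o(1))M$. The pairs with $N(F_f)\le 1$ contribute at most $\sum_k w(k)\#B_{2k}=M$ to it, so the pairs with $N(F_f)>1$ must contribute at least $(1+o(1))M\gg K^2$; on the other hand, by Corollary \ref{coro:convexitybound} each such pair contributes $O(K\log K)$, so the $w$‑weighted count $\widetilde Q$ of pairs with $N(F_f)>1$ satisfies $\widetilde Q\gg K^2/(K\log K)=K/\log K$. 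Since $w\le 1$ is supported inside $(K,2K)$, $\widetilde Q$ is a lower bound for the number $Q$ of unweighted admissible pairs with $N(F_f)>1$, giving $Q\gg K/\log K$.

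The one step that is not entirely routine is this last balancing: the crude inequality ``first moment $\le Q\cdot\max+P$'' is useless given only that the first moment is $\asymp P$, so one must genuinely exploit that the mean value of $N(F_f)$ is exactly $2$ --- in particular \emph{strictly larger than $1$} --- and that the sub‑unit pairs can account for at most $(1+o(1))$ times half the total. The smooth‑to‑sharp passage, the nonnegativity of $N(F_f)$, and Markov's inequality are all standard, so I anticipate no further difficulty.
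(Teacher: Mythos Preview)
Your proposal is correct. The first part is exactly the paper's argument (Markov plus the first moment from Theorem~\ref{thm:averageweight}, after smoothing). For the second part you and the paper both start from the observation that the pairs with $N(F_f)>1$ must carry $\gg K^2$ of the first moment (since the mean is $2$ and the sub-unit pairs contribute at most $M$), but you finish differently: the paper applies Cauchy--Schwarz to write
\[
\sum_{N(F_f)>1} N(F_f)\ \le\ Q'^{1/2}\Big(\sum_{N(F_f)>1} N(F_f)^2\Big)^{1/2}
\]
and then bounds the second moment by $(\max N(F_f))\cdot\sum N(F_f)\ll K\log K\cdot K^2$, whereas you simply use $\sum_{N(F_f)>1} N(F_f)\le Q'\cdot\max N(F_f)\ll Q'\,K\log K$. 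Both routes give $Q'\gg K/\log K$; yours is more direct and shows the Cauchy step is not actually needed. The paper's formulation has the minor advantage that any future improvement on the second moment of $N(F_f)$ would plug straight in to sharpen the bound.
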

\begin{proof}[Proof of Corollary \ref{coro:upperlower}]
 For the first statement, let $Q$ denote the number of such pairs $(f,k)$ with $N(F_f) > A$.  Then
\begin{equation}
 A Q \leq \sum_{\substack{K < k \leq 2K  \\ k \text { odd }}} \sum_{f \in B_{2k}} N(F_f),
\end{equation}
which is $\ll K^2$ by Theorem \ref{thm:averageweight} (after a smoothing argument).  For the second statement, we use Cauchy's inequality as follows
\begin{equation}
\label{eq:normcomparison}
 \sum_{\substack{K < k \leq 2K  \\ k \text { odd }}} \sum_{\substack{f \in B_{2k} \\ N(F_f) > 1}} N(F_f) \leq  (\sum_{\substack{K < k \leq 2K  \\ k \text { odd }}} \sum_{\substack{f \in B_{2k} \\ N(F_f) > 1}} 1)^{1/2} (\sum_{\substack{K < k \leq 2K  \\ k \text { odd }}} \sum_{\substack{f \in B_{2k} \\ N(F_f) > 1}} N(F_f)^2)^{1/2}.
\end{equation}
Since $N(F_f)$ is $2$ on average, the left hand side of \eqref{eq:normcomparison} is $\gg K^2$.  Let $Q'$ be the number of pairs $(f,k)$ with $N(F_f) > 1$.  Using the convexity bound (Corollary \ref{coro:convexitybound}), we see that the right hand side of \eqref{eq:normcomparison} is then 
\begin{equation}
 \ll Q'^{1/2} (K \log{K})^{1/2} (\sum_{\substack{K < k \leq 2K  \\ k \text { odd }}} \sum_{\substack{f \in B_{2k}}} N(F_f))^{1/2}.
\end{equation}
Using Theorem \ref{thm:averageweight} again, we find the stated lower bound on $Q'$.
\end{proof}

\begin{proof}[Proof of Theorem \ref{NF}.]
Here we show how to derive Theorem \ref{NF} from Theorem \ref{I2}. 
 By \cite[Lemma 1.1 and Remark 2.3]{Ic}, we have the following expansion
\begin{align} \label{I1}
  F \left( \begin{pmatrix} \tau & \\ & \tau' \end{pmatrix} \right) 
 = \sum_{g \in B_{k+1}} 
   \left\langle F|_{z=0}, \frac{g}{||g||} \times \frac{g}{||g||}  \right\rangle 
  \frac{g(\tau)}{||g||} \frac{g(\tau')}{||g||}.
\end{align}
Combining (\ref{I1}) and Theorem \ref{I2}, we have the following $L^2$ norm formula
\begin{align} \label{NF1}
  N(F_f)= \frac{v_1^{-2}}{v_2^{-1}} \sum_{g \in B_{k+1}} 2^{-(k+1)} \frac{\left\langle h, h \right\rangle}{\left\langle F, F \right\rangle}
      \frac{1}{\left\langle f, f \right\rangle} \Lambda \left( \tfrac{1}{2}, \mathrm{sym}^2 g \otimes f  \right).
\end{align}
By \cite[p.251]{Iw}, we have
$$L(1, \mathrm{sym}^2f)= \frac{\pi}{2} \frac{(4\pi)^{2k}}{\Gamma(2k)} \left\langle f, f \right\rangle.
$$ 
Furthermore, \cite[Corollary of Theorem 2]{KS} and \cite[Lemma 5.2]{Br} show (be aware that Brown's normalization of $\langle h, h \rangle$ differs from ours by our extra $1/6$ arising from the index of $\Gamma_0(4)$ in $\SL{2}{\mz}$)
 $$ \frac{\left\langle h, h \right\rangle }{\left\langle F, F \right\rangle} = \frac{2^3 3 \pi^{k+1}}{k!}\frac{ 1 }{ L(\tfrac{3}{2}, f)}.$$
 Combining the formulas gives the result.
\end{proof} 

\subsection{Nonvanishing of the $L^2$ norm}
In this section, we fix the weight $\ell$ (even) and study the nonvanishing of the restricted $L^2$ norm in two ways. 
One is using the structure of the theory of Jacobi forms while the other is from a lower bound on twisted $L$-functions.  Be aware that the weight $\ell$ used in this section and Section \ref{section:SK} corresponds to weight $k+1$ used in Section \ref{section:L2} and the rest of the paper.
\begin{mytheo} \label{NV1}
 Of the $\mathrm{dim}(S_{2\ell-2})= \frac{\ell}{6}+ O(1)$ Hecke eigenforms lifting to Siegel modular forms of weight $\ell$ under the Saito-Kurokawa correspondence, no more than $\mathrm{dim}(M_{\ell-10})= \frac{\ell}{12}+ O(1)$ have vanishing restricted $L^2$ norm $N(F_f)$.  Consequently, for such $f$ we have that there exists a Hecke eigenform $g$ of weight $\ell$ such that $L(1/2, \mathrm{sym}^2 g \otimes f) \neq 0$.
\end{mytheo}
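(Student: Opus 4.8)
The plan is to transfer the problem to Jacobi forms via the Maass isomorphism, reduce the vanishing of $F_f|_{z=0}$ to a condition on the first Fourier--Jacobi coefficient $\phi_1$, estimate the dimension of the resulting kernel using the Taylor-expansion theory for Jacobi forms of index one, and finally invoke Ichino's formula for the last sentence.

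First I would record that $N(F_f)=0$ exactly when $F_f|_{z=0}\equiv 0$, and that by Theorem~\ref{Ma} the map $F\mapsto\phi_1$ is injective on Maass cusp forms. Since $F_f|_{z=0}(\tau,\tau')=\sum_{m\ge 1}\phi_m(\tau,0)e(m\tau')$, the coefficient of $e(\tau')$ is $\phi_1(\tau,0)$, so $F_f|_{z=0}\equiv 0$ forces $\phi_1(\tau,0)\equiv 0$. (In fact the converse also holds: from the Maass relations one gets $\sum_r A(n,r,m)=\sum_{d\mid(n,m)}d^{\ell-1}\sum_{r'}A(nm/d^2,r',1)$, so that $\phi_1(\cdot,0)\equiv 0$ already propagates to $\phi_m(\cdot,0)\equiv 0$ for every $m$; but only the easy implication is needed for the stated upper bound.) Because $\phi_1$ is a cuspidal Jacobi form of weight $\ell$ and index $1$ and restriction to $z=0$ maps $J_{\ell,1}^{\mathrm{cusp}}$ into $S_\ell(\SL{2}{\mz})$, the Hecke eigenforms $f$ with $N(F_f)=0$ embed, via $f\mapsto\phi_1(F_f)$, into the kernel of
$$ r_0\colon J_{\ell,1}^{\mathrm{cusp}}\longrightarrow S_{\ell}(\SL{2}{\mz}),\qquad \phi\mapsto\phi(\cdot,0). $$
In particular the number of such $f$ is at most $\dim\ker r_0$.

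Next I would bound $\dim\ker r_0$ using the Taylor-development theory of index-one Jacobi forms (\cite{EZ}, \S3): the map $\phi\mapsto(\phi(\cdot,0),\xi_1)$, with $\xi_1$ a suitable modification of the second Taylor coefficient, is an isomorphism $J_{\ell,1}^{\mathrm{cusp}}\xrightarrow{\sim}S_\ell(\SL{2}{\mz})\oplus S_{\ell+2}(\SL{2}{\mz})$ whose first component is precisely $r_0$. Hence $\ker r_0\cong S_{\ell+2}(\SL{2}{\mz})$, and since $S_{\ell+2}(\SL{2}{\mz})=\Delta\cdot M_{\ell-10}(\SL{2}{\mz})$ we get $\dim\ker r_0=\dim M_{\ell-10}=\tfrac{\ell}{12}+O(1)$. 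Combined with the fact (Theorems~\ref{Ma},~\ref{Ja} and the Shimura correspondence) that there are $\dim S_{2\ell-2}=\tfrac{\ell}{6}+O(1)$ eigenforms lifting to weight $\ell$, this gives the first assertion. For the last sentence: if $N(F_f)\ne 0$ then $F_f|_{z=0}\not\equiv 0$, so by the eigenexpansion \eqref{I1} there is a Hecke eigenform $g$ of weight $\ell$ for $\SL{2}{\mz}$ with $\langle F_f|_{z=0},g\times g\rangle\ne 0$; Theorem~\ref{I2} then exhibits $\Lambda(\tfrac12,\mathrm{sym}^2 g\otimes f)$ as a positive multiple of $|\langle F_f|_{z=0},g\times g\rangle|^2$, hence positive, and since the archimedean factor in $\Lambda$ is positive at $s=\tfrac12$ we conclude $L(\tfrac12,\mathrm{sym}^2 g\otimes f)\ne 0$.

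The step that needs genuine care is the reduction to $\phi_1$: one must exploit that $F_f$ lies in the Maass Spezialschar, since a general Siegel cusp form of (even) weight $\ell$ vanishing along $z=0$ is only known to be divisible by the Igusa cusp form $\chi_{10}$, which leaves a space of dimension of order $\ell^3$ --- far too large to be useful. Once that reduction is in place the remaining ingredients --- the Eichler--Zagier Taylor-expansion isomorphism for index one and the elementary identity $\dim S_{\ell+2}(\SL{2}{\mz})=\dim M_{\ell-10}(\SL{2}{\mz})$ --- are standard, so the rest of the argument is short. A secondary point to verify is that the first coordinate of the Eichler--Zagier isomorphism really is the naive restriction $\phi\mapsto\phi(\cdot,0)$, with no normalizing constant, which is what makes $\ker r_0$ come out to exactly the claimed size.
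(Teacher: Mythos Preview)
Your proof is correct and follows the same overall architecture as the paper's: reduce ``$F_f|_{z=0}\equiv 0$'' to ``$\phi_1(\cdot,0)\equiv 0$'' using the Maass space structure, then compute the dimension of the kernel of the restriction map $r_0:J_{\ell,1}^{\mathrm{cusp}}\to S_\ell$ via Eichler--Zagier. The one genuine difference is the bookkeeping in this last step. The paper uses the module-theoretic presentation $M_{\ell-10}\oplus M_{\ell-12}\xrightarrow{\sim}J_{\ell,1}^{\mathrm{cusp}}$, $(f,g)\mapsto f\phi_{10,1}+g\phi_{12,1}$, and reads off $\ker r_0$ directly from the explicit vanishing $\phi_{10,1}(\tau,0)=0$ and nonvanishing $\phi_{12,1}(\tau,0)=12\Delta(\tau)$. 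You instead invoke the Taylor-development isomorphism $J_{\ell,1}^{\mathrm{cusp}}\xrightarrow{\sim}S_\ell\oplus S_{\ell+2}$ with first component $r_0$, and then use $S_{\ell+2}=\Delta\cdot M_{\ell-10}$ to recover the same dimension. Both are standard facts from \cite{EZ} and are really two sides of the same coin; the paper's route is slightly more hands-on (no need to check that the first Taylor coordinate is exactly the naive restriction, nor that the map surjects onto $S_\ell\oplus S_{\ell+2}$), while yours makes the kernel identification a one-line consequence of an abstract isomorphism. Your treatment of the converse (via the Maass relations) and of the final $L$-value statement (via Ichino) is also fine and matches the paper in spirit.
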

This result says that at least $(\frac{1}{2}+ O(\frac{1}{\ell})) \cdot \mathrm{dim}(S_{2\ell-2})$ of the forms do not vanish. 
The proof of this theorem is based on the theory of Jacobi forms and does not directly show nonvanishing of any $GL_3 \times GL_2$ $L$-functions.  By the way, this nonvanishing result for $N(F_f)$ does not lead to a lower bound on $N(F_f)$ and only shows that for at least one $g$ that $L(1/2, \mathrm{sym}^2 g \otimes f) \neq 0$ by way of Ichino's formula.  Corollary \ref{coro:upperlower} complements this by showing that the norm is at least $1$ for at least $\gg K/\log{K}$ out of the $\asymp K^2$ forms $F_f$ under consideration.  Our results do not rule out the following ``worst-case'' behavior: for each weight $2\ell-2$, $50 \%$ of the forms have vanishing norm and the other $50 \%$ have nonvanishing norm, yet amongst the forms with nonvanishing norm, there exists one form $f$ with $N(F_f) \approx K$, and the rest with very small (say $\leq \exp(-\exp(K))$) but nonzero norm.  It would be interesting to obtain a better upper bound on $N(F_f)^2$ on average over $f$ and $k$ as one could improve the lower bound in Corollary \ref{coro:upperlower}.


We also present a different argument giving the following result.

\begin{mytheo} \label{NV2}
\label{thm:twistnonvanish}
 Suppose that for $\ell$ even large enough and for given $f$ a Hecke eigenform of weight $2\ell-2$, there exists a fundamental discriminant $D < 0$, $4|D$, such that $|D| \ll \ell^{1-\varepsilon}$ and satisfying $L(\frac{1}{2}, f \otimes \chi_D) \geq \ell^{-100}.$ Then $N(F_f) \neq 0$ for such $f$.
\end{mytheo}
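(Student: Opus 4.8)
The plan is to reduce the nonvanishing of $N(F_f)$ to the nonvanishing of the restriction to $z=0$ of the first Fourier--Jacobi coefficient of $F_f$, and then to feed in the hypothesis through the Kohnen--Zagier formula relating the Fourier coefficients of $h_f$ to $L(\tfrac12,f\otimes\chi_D)$. For the reduction: comparing the Fourier--Jacobi expansion $F_f(\tau,0,\tau')=\sum_{m\ge1}\phi_m(\tau,0)e(m\tau')$ with \eqref{I1}, the coefficient of $e(\tau')$ yields $\phi_1(\tau,0)=\sum_{g\in B_{k+1}}c_{g,g}\,g(\tau)$, where $c_{g,g}=\langle F_f|_{z=0},g\times g\rangle/\langle g,g\rangle^2$. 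Since $F_f|_{z=0}(\tau,\tau')=\sum_g c_{g,g}\,g(\tau)g(\tau')$ and the $g$ are linearly independent, $N(F_f)=0$ if and only if every $c_{g,g}=0$, i.e.\ if and only if $\phi_1(\cdot,0)\equiv0$ (this is also visible from Theorem~\ref{NF} and the positivity in Theorem~\ref{I2}). Equivalently, via the Hecke isomorphism of Theorem~\ref{Ja}, $N(F_f)=0$ precisely when $h_f$ lies in the kernel of the Hecke-equivariant map $J_{k+1,1}\to M_{k+1}(\SL{2}{\mz})$, $\phi\mapsto\phi(\cdot,0)$; this kernel is a proper Hecke-stable subspace (of dimension $\dim M_{\ell-10}$, which is what drives Theorem~\ref{NV1}), and the task is to show that the given $f$ does not lie in it. So it suffices to exhibit one nonzero Fourier coefficient of the weight $k+1$ cusp form $\phi_1(\cdot,0)=\sum_{n\ge1}r_n e(n\tau)$, where by Theorem~\ref{Ja} one computes $r_n=\sum_{r\in\mz}c(4n-r^2)$, the $c(N)$ being the Fourier coefficients of $h_f$.

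The arithmetic input is the Kohnen--Zagier formula (\cite{Kohnen}): since $D<0$ is a fundamental discriminant with $4\mid D$, one has $|c(|D|)|^2=c_k\,|D|^{k-1/2}\,\frac{\langle h,h\rangle}{\langle f,f\rangle}\,L(\tfrac12,f\otimes\chi_D)$ for an explicit positive constant $c_k$. Combining this with the identities of Section~\ref{section:L2} (the value of $\langle h,h\rangle/\langle F_f,F_f\rangle$ in terms of $L(\tfrac32,f)$, and $L(1,\mathrm{sym}^2 f)=\tfrac{\pi}{2}\tfrac{(4\pi)^{2k}}{\Gamma(2k)}\langle f,f\rangle$) and the standard polynomial bounds $L(1,\mathrm{sym}^2 f)^{\pm1}\ll_\varepsilon\ell^\varepsilon$, $L(\tfrac32,f)^{\pm1}\ll1$, the hypothesis $L(\tfrac12,f\otimes\chi_D)\ge\ell^{-100}$ yields a quantitative lower bound $|c(|D|)|\gg|D|^{(k-1/2)/2}\,\ell^{-C}$ with $C$ absolute. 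Since $|D|\ll\ell^{1-\varepsilon}$, this exceeds by a fixed positive power of $\ell$ the companion mean-square bound $\sum_{0\le N\le X}|c(N)|^2\ll_\varepsilon X^{k+1/2+\varepsilon}\,\frac{\langle h,h\rangle}{\langle f,f\rangle}$ for $X=O(|D|)$, which is the only range in which $c(|D|)$ is not overwhelmed, in absolute value, by the other coefficients occurring in $r_n$.

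It remains to deduce $\phi_1(\cdot,0)\not\equiv0$. Suppose instead that $r_n=0$ for all $n$; in particular $r_n=0$ for $n\asymp|D|$. Grouping each $c(4n-r^2)$ according to the fundamental discriminant $D'$ underlying $4n-r^2$ and using the Hecke recursion $c(|D'|m^2)=c(|D'|)\,P_{D',m}\!\big(\{\lambda_f(p)\}\big)$ with $P_{D',m}$ explicit, one writes $r_n=\sum_{D'<0\ \mathrm{fund.}}c(|D'|)\,\beta_{n,D'}$, and one would like to combine the vanishing relations $r_n=0$ over $n\asymp|D|$ against a suitably chosen smooth (possibly additively or multiplicatively twisted) weight so that the $D'=D$ part survives with a factor $\gg|D|^{-O(1)}$ times the number of terms, while the $D'\ne D$ part, bounded by Cauchy--Schwarz and the mean-square estimate above, is of strictly smaller order; this forces $|c(|D|)|$ below the lower bound of the previous paragraph, a contradiction, and hence $N(F_f)\ne0$. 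The main obstacle is precisely this last estimate. Because $r_n$ is a sum of $\asymp\sqrt{|D|}$ Fourier coefficients of $h_f$ of comparable typical size, none of which is singled out, one must engineer the combination so that the contributions attached to $D$ (the coefficients $c(|D|m^2)$) accumulate coherently while the complementary shifted-convolution sums of coefficients of $h_f$ that encode the remaining discriminants exhibit square-root cancellation; making the numerology close -- reconciling the loss $\ell^{-100}$ in the hypothesis, the saving furnished by $|D|\ll\ell^{1-\varepsilon}$, and the polynomial-size bounds for the auxiliary $L$-values -- is exactly the point where the quantitative hypotheses of the theorem are needed.
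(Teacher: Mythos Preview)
Your reduction to showing $\phi_1(\cdot,0)\not\equiv 0$ is correct, and invoking Kohnen--Zagier for the lower bound on $|c(|D|)|$ is exactly the right move. But from that point you overcomplicate matters and end up with an argument you yourself flag as incomplete (the averaging over $n\asymp |D|$, the Hecke recursion, and the shifted-convolution estimate you cannot close). The paper's proof is much more direct, and you are missing the key elementary observation.

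Look at a \emph{single} Fourier coefficient $r_n$ of $\phi_1(\tau,0)$, namely $n=|D|/4$. Then
\[
r_n \;=\; c(|D|) \;+\; 2\sum_{1\le r,\;r^2<|D|} c(|D|-r^2).
\]
If $r_n=0$, then $|c(|D|)|$ is bounded by (twice) the sum of at most $O(\sqrt{|D|})$ terms $|c(|D|-r^2)|$. Now use Kohnen--Zagier on \emph{both} sides: the hypothesis gives
\[
|c(|D|)|^2 \;\gg\; |D|^{\ell-3/2}\,\ell^{-O(1)},
\]
while the convexity bound (no need for any mean-square estimate) gives for each $r\ge 1$
\[
|c(|D|-r^2)|^2 \;\ll\; (|D|-r^2)^{\ell-3/2}\,\ell^{O(1)} \;\le\; (|D|-1)^{\ell-3/2}\,\ell^{O(1)}.
\]
The ratio $\bigl(|D|/(|D|-1)\bigr)^{\ell-3/2}\approx \exp\!\bigl((\ell-3/2)/|D|\bigr)$ is at least $\exp(\ell^{\varepsilon}/2)$ once $|D|\ll \ell^{1-\varepsilon}$, and this \emph{super-polynomial} gain swamps the polynomial losses $\ell^{O(1)}$ and the factor $\sqrt{|D|}$ from the number of terms. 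Hence $r_n\neq 0$, contradiction.

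In short: the hypothesis $|D|\ll \ell^{1-\varepsilon}$ is there precisely to make the weight factor $N^{\ell-3/2}$ in Kohnen--Zagier create an exponential disparity between $c(|D|)$ and every other coefficient appearing in $r_{|D|/4}$. Your statement that the lower bound ``exceeds by a fixed positive power of $\ell$'' the mean-square undersells this enormously (and as stated is actually false for the full range $N\le |D|$); once you see the gain is exponential, no averaging, Hecke recursion, or shifted-convolution input is needed at all.
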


The result of Iwaniec and Sarnak [IS] shows that the hypothesis holds with $D=-4$, say, for at least $(\half - \varepsilon) \cdot \dim(S_{2\ell-2}))$ of the forms (for $\ell$ large enough in terms of $\varepsilon$), on average over $\ell$; note that Theorem \ref{NV1} holds for individual $\ell$.
Recall that any improvement in Iwaniec-Sarnak's work to $ (\frac{1}{2}+ \varepsilon)$ would effectively remove the Landau-Siegel zero, however this requires a nonvanishing result uniformly in $D$ over a large range whereas we only require a single $D$, e.g., $D=-4$.
 
\begin{proof}[Proof of Theorem \ref{NV1}.]
For this proof we use the Fourier-Jacobi expansion of Siegel modular forms and properties of Jacobi forms.
There is a certain family of Hecke operators $V_q: J_{\ell,m} \to J_{\ell,mq}$ defined as follows:
\begin{equation} \label{Vl}
  (\phi|_{\ell,m}V_q)(\tau, z)= q^{\ell-1} \sum_{ad=q} \sum_{b \shortmod{d}} d^{-\ell} \phi(\frac{a\tau +b}{d}, az).
\end{equation}
Then the inverse map of the map in Theorem \ref{Ma} is given as follows: if $\phi \in J_{\ell,1}$, then
$$ F \begin{pmatrix}\tau & z \\ z & \tau' \end{pmatrix} =v(\phi):= \sum_{m \geq 0} (\phi|_{\ell,1}V_m)(\tau, z) e(m\tau')
$$
is a Siegel modular form of weight $\ell$ in Maass space (\cite[pp.74-76]{EZ}).

Notice from (\ref{Vl}) that if $\phi(\tau, 0)=0$ for all $\tau$, then $\phi|_{\ell,1}V_m$ has the same property for all $m$, and thus $v(\phi)|_{z=0}$ is zero. This means that for $F$ in the Maass space, it restricted to $z=0$ is identically zero iff $\phi_1(\tau, 0)$ is identically zero.
Therefore, the subspace of Saito-Kurokawa cusp forms $F$'s that vanish at $z=0$ is isomorphic to the subspace of $J_{\ell,1}^{\mathrm{cusp}}$ that vanish at $z=0$. Now we simply quote the following vector space isomorphism from \cite[p.40]{EZ}:
\begin{align}
 M_{\ell-10} \oplus M_{\ell-12} \cong J_{\ell,1}^{\mathrm{cusp}},
\end{align}
given by the map
\begin{align}
 (f,g) \to f(\tau) \phi_{10,1}(\tau, z) + g(\tau) \phi_{12,1}(\tau, z).
\end{align}
Here $\phi_{10,1}(\tau, z)$ and $\phi_{12,1}(\tau, z)$ are specific Jacobi forms of weight $10$ and $12$ respectively. Moreover
$\phi_{10,1}(\tau, z)= O(z^2)$ while $\phi_{12,1}(\tau, 0)= 12 \Delta (\tau).$
This shows that the dimension of the subspace of $J_{\ell,1}^{\mathrm{cusp}}$ of vanishing forms is the same as $\mathrm{dim}(M_{\ell-10})$. Hence there cannot be more than $\mathrm{dim}(M_{\ell-10})$ Hecke forms that vanishing along $z=0$.
\end{proof}

\begin{proof}[Proof of Theorem \ref{NV2}]
For this result, we use the isomorphism (Theorem \ref{Ja}) between Jacobi forms and half-integral weight forms.  We first consider the special case $D=-4$.  Note that if $c(4)+ 2c(3) \ne 0$ then the coefficient of $q^1$ in the Fourier expansion of $\phi(\tau, 0)$ (which is an elliptic modular form of weight $k$) is nonzero, and hence that $\phi(\tau, 0)$ does not identically vanish.  If $c(4)=-2c(3)$ then $|c(4)|^2=4|c(3)|^2$ and by the Kohnen-Zagier formula \cite[Theorem 1]{KZ}, we deduce that
$$ 4^{\ell-\frac{3}{2}} L(\tfrac{1}{2}, f \otimes \chi_{-4})= 4 \cdot 3^{\ell-\frac{3}{2}} L(\tfrac{1}{2}, f \otimes \chi_{-3}).
$$
We apply the hypothesized lower bound for $L(\frac{1}{2}, f \otimes \chi_{-4})$ and the convexity bound on $L(\frac{1}{2}, f \otimes \chi_{ -3})$ to deduce a contradiction for such an $f$.
The point is the $L$-functions are naturally normalized not to grow too much with respect to $\ell$ while the different growth rate of $4^\ell$ and $3^\ell$ is extreme.

One may observe that the same argument carries through for any $D=-4n$ for which we can show 
\begin{equation}
\label{eq:unequalcn}
 |c(4n)|^2 \neq |\sum_{0 < r^2 \leq 4n} c(4n-r^2)|^2 \leq 4 \sqrt{n} \sum_{0 < r^2 < 4n} |c(4n-r^2)|^2,
\end{equation}
by Cauchy's inequality.  By Kohnen-Zagier, \eqref{eq:unequalcn} would be implied by
$$ (4n)^{\ell-\frac{3}{2}} L(\tfrac{1}{2}, f \otimes \chi_{- 4n}) > 
 4 \sqrt{n} \sum_{0<r^2 < 4n} (4n-r^2)^{\ell-\frac{3}{2}} L(\tfrac{1}{2}, f \otimes \chi_{- (4n-r^2)}),
$$
which in turn would be implied by the stated lower bound on $L(1/2, f \otimes \chi_{-4n})$ and the convexity bound on all other central values, provided $n \ll l^{1-\varepsilon}$.

This argument certainly breaks down once $n$ is approximately $\ell$ for then the terms $4n^{\ell-\frac{3}{2}}$ and $(4n-1)^{\ell-\frac{3}{2}}$ are of roughly the same size. This naturally leads to the question of given $f$, how small can one choose $D$ (in terms of $f$) to guarantee $L(\frac{1}{2}, f \otimes \chi_{- D}) \gg \ell^{-100}?$ This problem was studied by Hoffstein and Kontorovich \cite{HK} who obtained $D$ as small as $\ell^{1+ \varepsilon}$, which barely fails to give a useful lower bound for our application.  The Riemann-Roch theorem shows that there exists a $D \ll \ell$ such that $c(|D|) \neq 0$ but does not give a lower bound on $|c(|D|)|$.
\end{proof}

\subsection{Sketch of proof of Theorem \ref{thm:averageweight}}
\label{section:sketch}
The difficult part of this paper is proving Theorem \ref{thm:averageweight} so here we indicate roughly how the argument goes before embarking on the full proof.
Consider the sum $\sum_{k} w(k) \sum_{f \in B_{2k}} \sum_{g \in B_{k+1}} \omega_f^{-1} \omega_g^{-1} L(1/2, \mathrm{sym}^2 g \otimes f)$, where $\omega_f^{-1}$ and $\omega_g^{-1}$ are Petersson weights; we wish to show that this expression is $\asymp K$.  This corresponds to a variant of the main theorem, namely Theorem \ref{thm:N*} which uses the Petersson weight for $g$ rather than the ``natural'' weight given by Theorem \ref{NF}.

We replace $L(1/2, \mathrm{sym}^2 g \otimes f)$ by $\sum_{n \leq K^{2}} \frac{\lambda_f(n) \lambda_g(n^2)}{\sqrt{n}}$ and apply the Petersson formula in $f$ and in $g$.  The diagonal term leads to the main term, the ``cross terms'' which arise from the diagonal in one of either $f$ or $g$ (but not both) is quite small, and the hard part is to analyze the double sum of Kloosterman sums, namely
\begin{equation*}
 \sum_{n \leq K^{2}} \frac{1}{\sqrt{n}} \sum_{c_1} \sum_{c_2} \frac{S(n^2, 1;c_1) S(n,1;c_2)}{c_1 c_2} \sum_{k \text{ odd}} i^{k} w(k) J_{k}\Big(\frac{4 \pi n}{c_1}\Big) J_{2k-1}\Big(\frac{4 \pi \sqrt{n}}{c_2}\Big).
\end{equation*}
The exponential decay of $J_{\nu}(x)$ for $\nu>0$ large and $x\leq \nu/100$ indicates that $c_1 \ll \frac{n}{K} \ll K^{}$ and $c_2 \ll 1$.  The hard part is to understand this sum of Bessel functions.  In Theorem \ref{thm:Salphabeta} we work out the asymptotics of this sum over $k$; the answer is that roughly
\begin{equation*}
  \sum_{k \text{ odd}} i^{k} w(k) J_{k}\Big(\frac{4 \pi n}{c_1}\Big) J_{2k-1}\Big(\frac{4 \pi \sqrt{n}}{c_2}\Big) \approx e\Big(\frac{2n}{c_1} + \frac{c_1}{4 c_2^2}\Big) (c_1/n)^{1/2}.
\end{equation*}
Since $c_2$ is very small we consider $c_2 = 1$ for simplicity.  Then the double sum of Kloosterman sums is reduced to understanding
\begin{equation*}
 \sum_{n \leq K^{2}} \frac{1}{n} \sum_{c_1 \ll \frac{n}{K}}  \frac{S(n^2, 1;c_1) }{c_1^{1/2}} e\Big(\frac{2n}{c_1}\Big).
\end{equation*}
At this point the Weil bound suffices to show that error term so far is $O(K)$ (in reality it should be $O(K^{1+\varepsilon})$) which is slightly larger than the main term.  Thus the terms with $n \leq K^{1-\varepsilon}$ lead to an error term of size $O(K^{2-\varepsilon})$ which is satisfactory; hence we consider $K^2/2 < n \leq K^2$ and replace $n^{-1}$ by $K^{-1}$.  The summand is then periodic in $c_1$ so is closely approximated by
\begin{equation*}
 K^{-2} \sum_{c_1 \ll K} c_1^{-1/2} \frac{K^2}{c_1} \sum_{r \shortmod{c_1}} S(r^2, 1;c_1) e\Big(\frac{2r}{c_1} \Big).
\end{equation*}
Then we compute
\begin{equation}
 \sum_{r \shortmod{c_1}} S(r^2, 1;c_1) e\Big(\frac{2r}{c_1} \Big) = \sumstar_{h \shortmod{c_1}} \sum_{r \shortmod{c_1}} e\Big(\frac{h(r+\overline{h})^2}{c_1} \Big) = \sumstar_{h \shortmod{c_1}} \sum_{r \shortmod{c_1}} e\Big(\frac{h r^2}{c_1} \Big).
\end{equation}
In \eqref{eq:Tcomp} and following we compute this sum; the answer is that it is $\phi(c_1) c_1^{1/2}$ in case $c_1$ is a square, and vanishes otherwise.  Thus this error term is of size
\begin{equation*}
 \sum_{c_1 \ll K, c_1 = \square} 1 \ll K^{1/2}.
\end{equation*}
This heuristic derivation is consistent with Theorem \ref{thm:N*}.

\section{Acknowledgements}
We warmly thank Akshay Venkatesh for his interest and valuable suggestions.  We also thank Abhishek Saha for some thoughtful suggestions and corrections.

\section{Useful formulas}
\subsection{Approximate functional equation}
The following approximate functional equation was proved for general $L$-functions and can be found in [IK].
\begin{mylemma}[Approximate functional equation] \label{AFE}
Let $H(u)$ be an even holomorphic function with rapid decay as $|\text{Im}(u)| \to \infty$ in a fixed vertical strip, and satisfying
$H(0)=1$, $H(-\frac{1}{4})=0.$ Then we have
\begin{eqnarray} 
 L\left(\tfrac{1}{2}, \mathrm{sym}^2 g \otimes f \right)= 
  2 \sum_{m_1 =1}^{\infty}\sum_{m_2 =1}^{\infty} 
  \frac{\lambda_f (m_1) A_{G}(m_1, m_2)}{(m_1 m_2^2)^{1/2}} V(m_1m_2^2, k),
\end{eqnarray}
where
\begin{eqnarray} \label{V}
 V(y, k)= \frac{1}{2 \pi i} \int_{(3)} y^{-u} H(u) \frac{ \gamma (\frac{1}{2}+ u, k )}{ \gamma (\frac{1}{2}, k)}
   \frac{du}{u},
\end{eqnarray}
and
\begin{eqnarray}
 \gamma(s,k)= 2^3 (2\pi)^{-(3s+3k-\frac{1}{2})} 
\Gamma \left(s+ 2k-\tfrac{1}{2}\right) \Gamma \left(s+k-\tfrac{1}{2} \right) 
  \Gamma \left(s +\tfrac{1}{2} \right). 
\end{eqnarray}
\end{mylemma}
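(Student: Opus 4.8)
The plan is to follow the standard contour-shift derivation of the approximate functional equation (the general version being the one from the reference cited above), specialized to the completed $L$-function $\Lambda(s,\mathrm{sym}^2 g\otimes f)=\gamma(s,k)L(s,\mathrm{sym}^2 g\otimes f)$, which is entire and satisfies $\Lambda(s,\mathrm{sym}^2 g\otimes f)=\Lambda(1-s,\mathrm{sym}^2 g\otimes f)$. First I would start from the claimed right-hand side, insert the definition \eqref{V} of $V(y,k)$, and interchange the sum over $m_1,m_2$ with the $u$-integral. On the line $\Re(u)=3$ the Dirichlet series \eqref{eq:RSdef} converges absolutely (it is a normalized degree-six Rankin--Selberg series, absolutely convergent for $\Re(s)>1$), while $H(u)$ decays rapidly and $\gamma(\tfrac12+u,k)$ is of at most polynomial growth on that line, so the interchange is legitimate and yields
\[
 2\sum_{m_1,m_2}\frac{\lambda_f(m_1)A_G(m_1,m_2)}{(m_1m_2^2)^{1/2}}\,V(m_1m_2^2,k)
 =\frac{2}{\gamma(\tfrac12,k)}\cdot\frac{1}{2\pi i}\int_{(3)}\Lambda\bigl(\tfrac12+u,\mathrm{sym}^2 g\otimes f\bigr)\,H(u)\,\frac{du}{u}.
\]
Writing $I$ for the last integral, it then remains to prove that $I=\tfrac12\Lambda(\tfrac12,\mathrm{sym}^2 g\otimes f)$.

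The heart of the argument is to move the line of integration in $I$ from $\Re(u)=3$ to $\Re(u)=-\tfrac12$. In the strip $-\tfrac12\le\Re(u)\le 3$ the integrand is holomorphic apart from the simple pole of $1/u$ at $u=0$: indeed $H$ is entire, $L(s,\mathrm{sym}^2 g\otimes f)$ is entire (this is the only place that fact is used), and the poles of $\gamma(\tfrac12+u,k)$ lie at $u=-1,-2,\dots$, to the left of the new contour. The shift is justified by the rapid decay of $H$ together with Stirling's formula for the three $\Gamma$-factors and the polynomial growth of $L(\tfrac12+u,\mathrm{sym}^2 g\otimes f)$ on vertical lines. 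Picking up the residue $\Lambda(\tfrac12,\mathrm{sym}^2 g\otimes f)H(0)=\Lambda(\tfrac12,\mathrm{sym}^2 g\otimes f)$ at $u=0$ gives
\[
 I=\Lambda\bigl(\tfrac12,\mathrm{sym}^2 g\otimes f\bigr)+\frac{1}{2\pi i}\int_{(-1/2)}\Lambda\bigl(\tfrac12+u,\mathrm{sym}^2 g\otimes f\bigr)\,H(u)\,\frac{du}{u}.
\]
In the remaining integral I would substitute $u\mapsto -u$; being careful that this reverses the orientation of the contour, and using that $H$ is even and that $\Lambda(\tfrac12-u,\mathrm{sym}^2 g\otimes f)=\Lambda(\tfrac12+u,\mathrm{sym}^2 g\otimes f)$ by the functional equation, the integral becomes $-\frac{1}{2\pi i}\int_{(1/2)}\Lambda(\tfrac12+u,\mathrm{sym}^2 g\otimes f)H(u)\frac{du}{u}$. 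Since the integrand has no poles in the strip $\tfrac12\le\Re(u)\le 3$, this last integral equals $I$, so $I=\Lambda(\tfrac12,\mathrm{sym}^2 g\otimes f)-I$, hence $I=\tfrac12\Lambda(\tfrac12,\mathrm{sym}^2 g\otimes f)$. Substituting back gives the right-hand side equal to $\Lambda(\tfrac12,\mathrm{sym}^2 g\otimes f)/\gamma(\tfrac12,k)=L(\tfrac12,\mathrm{sym}^2 g\otimes f)$, which is the assertion.

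I do not expect a genuine obstacle here; the argument is entirely routine. The only inputs are the holomorphy and functional equation of $\Lambda(s,\mathrm{sym}^2 g\otimes f)$ and the standard Stirling/convexity estimates needed to justify the contour shifts. The one thing requiring care is the bookkeeping in the step $u\mapsto -u$ (the sign from the orientation reversal is what turns $\Lambda(\tfrac12)+I$ into $\Lambda(\tfrac12)-I$), and keeping the intermediate contour to the right of the $\Gamma$-poles at $u=-1,-2,\dots$ so that the only residue collected is at $u=0$. The normalization $H(0)=1$ is exactly what makes that residue equal $\Lambda(\tfrac12,\mathrm{sym}^2 g\otimes f)$, and the rapid decay of $H$ is what makes every manipulation converge; the extra vanishing condition $H(-\tfrac14)=0$ is not needed for this identity.
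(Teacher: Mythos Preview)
Your argument is correct and is exactly the standard contour-shift proof; the paper does not give its own proof but simply cites the general approximate functional equation from \cite{IK}, which is derived in precisely the way you describe. Your observation that the condition $H(-\tfrac14)=0$ plays no role in this identity is also right; that hypothesis is used later in the paper (in the evaluation of $M_k$) to kill the pole of $\zeta(2+4u)$ at $u=-\tfrac14$.
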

By Stirling's approximation, one shows
\begin{equation} \label{Stirling}
H(u) \frac{ \gamma (\frac{1}{2}+ u, k )}{ \gamma (\frac{1}{2}, k)}= k^{2u}H_1(u)(1+ \frac{c_1(u)}{k} + \dots)
\end{equation}
where $H_1(u)$ is meromorphic on $\mc$ with poles at $u=-1, -2, \dots$, having exponential decay as $|\text{Im}(u)| \to \infty$ in any vertical strip, and each $c_i(u)$ is a polynomial in $u$.  Moving the contour far to the right shows $V(y,k) \ll_A (1 + \frac{y}{k^2})^{-A}$ with $A>0$ arbitrarily large.  Taking the first two terms in the expansion, we write
\begin{equation}
\label{eq:Vexpansion}
V(y,k) = V_0(y/k^2) + k^{-1} V_1(y/k^2) + O(k^{-2} (1 + \frac{y}{k^2})^{-A}),
\end{equation}
where for $i=0,1$, we have
\begin{equation}
\label{eq:Videf}
V_i(x) = \frac{1}{2 \pi i} \int_{(1)} x^{-u} H_1(u) c_i(u) \frac{du}{u}.
\end{equation}

\begin{mylemma} \label{AFRD}
For $i=0,1$, we have
$$
 x^a \frac{\partial^{a}}{\partial x^a} V_i(x) \ll \left(1 + x \right)^{-A}.
$$ The implied constant depends only on $a$ and $A$. 
\end{mylemma}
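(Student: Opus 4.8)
The plan is to differentiate under the integral sign in \eqref{eq:Videf} and then shift the contour, treating the ranges $x \le 1$ and $x \ge 1$ separately. First I would use the elementary identity $x^a \frac{\partial^a}{\partial x^a} x^{-u} = P_a(u) x^{-u}$, where $P_a(u) = (-1)^a u(u+1)\cdots(u+a-1)$ is a polynomial of degree $a$ satisfying $|P_a(u)| \ll_a (1+|u|)^a$ on any vertical line, and which has $u$ as a factor once $a \ge 1$. Because the integrand in \eqref{eq:Videf} decays exponentially in $|\mathrm{Im}(u)|$ in any fixed vertical strip (this is the asserted behaviour of $H_1$, the $c_i$ being polynomials) while $|x^{-u}|$ depends only on $\mathrm{Re}(u)$, differentiation under the integral sign is legitimate and gives
\begin{equation*}
 x^a \frac{\partial^a}{\partial x^a} V_i(x) = \frac{1}{2\pi i} \int_{(1)} x^{-u} H_1(u) c_i(u) P_a(u) \frac{du}{u}.
\end{equation*}
Multiplying by the polynomial $P_a$ preserves the exponential decay, so by Cauchy's theorem the contour may be moved freely; the only singularities of the integrand are a simple pole at $u=0$ (present only when $a=0$, since for $a \ge 1$ the factor of $u$ in $P_a$ cancels the $1/u$) and the poles of $H_1$ at $u=-1,-2,\dots$.

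For $x \ge 1$ I would push the contour to $\mathrm{Re}(u) = A$; no pole is crossed, $|x^{-u}| = x^{-A}$ there, and the resulting integral is $\ll x^{-A} \int_{(A)} |H_1(u) c_i(u) P_a(u)| \, |du|/|u| \ll_{a,A} x^{-A}$, the last integral converging by the exponential decay of $H_1$. For $x \le 1$ I would instead move the contour to $\mathrm{Re}(u) = -\tfrac12$, a line lying strictly between the potential pole at $u=0$ and the first pole of $H_1$ at $u=-1$. At most one residue is encountered, namely that at $u=0$ when $a=0$, which equals the constant $H_1(0) c_0(0)$; and the shifted integral is $\ll x^{1/2} \int_{(-1/2)} |H_1(u) c_0(u)| \, |du|/|u| \ll x^{1/2} \le 1$. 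Thus $x^a \partial_x^a V_i(x) \ll_a 1$ for $x \le 1$ and $\ll_{a,A} x^{-A}$ for $x \ge 1$, and the two estimates combine into $\ll_{a,A} (1+x)^{-A}$, as required.

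The argument is a routine Mellin-transform estimate, so I do not expect a genuine obstacle. The only points needing a little care are confirming that the exponential decay of $H_1$ on vertical strips recorded just after \eqref{Stirling} is uniform enough to justify both the interchange of differentiation and integration and the contour shifts, and pinning down the exact location of the integrand's poles so that $\mathrm{Re}(u) = -1/2$ is an admissible contour for $x \le 1$.
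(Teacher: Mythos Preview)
Your proof is correct and is precisely the standard contour-shifting argument; the paper in fact omits a proof of this lemma entirely, treating it as routine, so there is nothing to compare against. One trivial slip: in your $x\le 1$ bound you wrote $c_0(u)$ and dropped the factor $P_a(u)$ from the integrand, but since $c_i$ and $P_a$ are polynomials this does not affect the exponential-decay estimate and the argument goes through unchanged.
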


\subsection{Petersson trace formula.}
The following Petersson trace formula is well-known and can be found in Iwaniec's book \cite{Iw}.
\begin{myprop} [Petersson trace formula] \label{Peter}
Suppose $\phi \in B_{k}$ has the Fourier expansion
$$ \phi(z)= \sum_{n=1}^{\infty} \lambda_{\phi}(n) n^{\frac{k-1}{2}} e(nz)
$$
where $e(z)= e^{2 \pi i z}$ and $\lambda_{\phi}(1)=1$.  Then
$$
\sum_{\phi \in B_{k}} \omega_{\phi}^{-1} \lambda_{\phi}(m) \lambda_{\phi}(n) = \delta_{m,n} + 2 \pi i^{-k}
\sum_{c=1}^{\infty} \frac {S(m,n;c)}{c} J_{k-1} \left( \frac{4\pi \sqrt{mn}}{c} \right),
$$
where 
\begin{equation}
\label{eq:omegadef}
\omega_{\phi}= \frac{(4 \pi)^{k-1} }{\Gamma(k-1)}\| \phi \|^2 = \frac{k-1}{2 \pi^2} L(1, \mathrm{sym}^2 \phi),
\end{equation}
$\delta_{m,n}$ equals $1$ if $m=n$ and $0$ otherwise,
$S(m,n;c)$ is the Kloosterman sum defined below, and $J_{k-1}$ is the $J$-Bessel function.
\end{myprop}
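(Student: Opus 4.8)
The plan is to derive the identity from the classical theory of holomorphic Poincar\'e series, exactly as in \cite{Iw}; since the result is standard I only describe the structure of the argument. For $m \geq 1$ let
\begin{equation*}
P_m(z) = \sum_{\gamma \in \Gamma_\infty \backslash \SL{2}{\mz}} (cz+d)^{-k}\, e(m\gamma z), \qquad \gamma = \begin{pmatrix} * & * \\ c & d \end{pmatrix},
\end{equation*}
be the $m$-th Poincar\'e series of weight $k$, where $\Gamma_\infty \leq \SL{2}{\mz}$ is the stabilizer of the cusp at $\infty$; for the weights $k$ occurring in this paper the series converges absolutely and defines a cusp form. I would prove the trace formula by expanding $P_m$ in two ways: via its explicit Fourier expansion, and via its expansion in a Hecke basis.

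First I would compute the Fourier coefficients of $P_m$ by the standard Bruhat-cell unfolding. Writing $P_m(z) = \sum_{n\geq 1} p_m(n)\, e(nz)$, the coset $\Gamma_\infty$ itself contributes the term $\delta_{m,n}$, and the remaining cosets, grouped according to the lower-left entry $c \geq 1$, are handled by completing the square in the exponential --- which produces the Kloosterman sum $S(m,n;c)$ --- and then extending the resulting $x$-integral to all of $\mr$ and recognizing it as the classical integral representation of $J_{k-1}$. This gives
\begin{equation*}
p_m(n) = \delta_{m,n} + 2\pi i^{-k}\Big(\frac{n}{m}\Big)^{(k-1)/2} \sum_{c=1}^{\infty}\frac{S(m,n;c)}{c}\, J_{k-1}\Big(\frac{4\pi\sqrt{mn}}{c}\Big).
\end{equation*}
Next I would compute $\langle P_m, \phi\rangle$ for an arbitrary weight-$k$ cusp form $\phi$ by unfolding $P_m$ against $\SL{2}{\mz}\backslash\mh$: the integral collapses to $\Gamma_\infty\backslash\mh$, the $x$-integral picks out the $m$-th Fourier coefficient $a_\phi(m)$ of $\phi$, and the $y$-integral is a Gamma integral, yielding $\langle P_m, \phi\rangle = \Gamma(k-1)(4\pi m)^{-(k-1)} a_\phi(m)$. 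Expanding $P_m = \sum_{\phi \in B_k}\|\phi\|^{-2}\langle P_m,\phi\rangle\,\phi$, inserting the normalization $a_\phi(n) = \lambda_\phi(n) n^{(k-1)/2}$, comparing $n$-th Fourier coefficients with the displayed formula for $p_m(n)$, and using $\omega_\phi = (4\pi)^{k-1}\|\phi\|^2/\Gamma(k-1)$ from \eqref{eq:omegadef}, the factor $(n/m)^{(k-1)/2}$ cancels on both sides and one is left precisely with the claimed identity.

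Finally, the alternative formula $\omega_\phi = \frac{k-1}{2\pi^2}L(1,\mathrm{sym}^2\phi)$ for a Hecke eigenform $\phi$ follows from the Rankin--Selberg method: integrating $|\phi|^2 y^k$ against the weight-zero Eisenstein series and taking the residue at $s=1$ relates $\|\phi\|^2$ to $\mathrm{Res}_{s=1}\sum_n \lambda_\phi(n)^2 n^{-s}$, and the Hecke-multiplicativity factorization $\sum_n\lambda_\phi(n)^2 n^{-s} = \zeta(s)L(s,\mathrm{sym}^2\phi)/\zeta(2s)$ together with $\zeta(2) = \pi^2/6$ and $\mathrm{vol}(\SL{2}{\mz}\backslash\mh) = \pi/3$ produces the stated constant. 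The only genuine subtlety is the absolute convergence of $P_m$ and the legitimacy of interchanging the summations in the unfolding; this is automatic for the weights $k \geq 3$ appearing throughout (indeed $k$ is well into the teens in every application of the formula), so no Hecke-type regularization is needed. I expect the only real labor to be the bookkeeping of the constants $(4\pi)^{k-1}/\Gamma(k-1)$ linking the two evaluations of $P_m$, rather than anything conceptual.
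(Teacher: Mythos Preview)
The paper does not actually prove this proposition; it simply quotes it as ``well-known'' with a reference to Iwaniec's book \cite{Iw}. Your sketch is exactly the classical Poincar\'e-series argument given there (Fourier expansion of $P_m$ via Bruhat cosets on one side, spectral expansion in the Hecke basis on the other, then matching the $n$-th coefficients), and the Rankin--Selberg derivation of the second expression for $\omega_\phi$ is likewise the standard one, so there is nothing to compare and your proposal is fine.
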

The Kloosterman sum is defined as
$$ S(m,n;c)= \sum_{a\bar{a} \equiv 1 \shortmod{c}} e \left( \frac{ma + n\bar{a}}{c} \right),
$$
and A. Weil proved that $$|S(m,n;c)| \leq (m,n,c)^{\frac{1}{2}} c^{\frac{1}{2}} \tau (c),$$
where $\tau (c)$ is the divisor function.

\section{The norm conjecture}
\label{section:conjecture}
In this section we explain how the moment conjectures of \cite{CFKRS} lead to Conjecture \ref{conj:N2}.
We write the formula for the norm in the form
\begin{equation}
\label{eq:NformulaPetersson}
 N(F_f) = c_f 
\sum_{g \in B_{k+1}} \omega_g^{-1} L(1, \text{sym}^2 g) L(\tfrac12, \text{sym}^2 g \otimes f),
\end{equation}
where $\omega_g = \frac{k}{2 \pi^2} L(1, \text{sym}^2 g)$, and
\begin{equation}
 c_f = \frac{v_2}{v_1^2} \frac{2^2 3 \pi^{-1}}{L(3/2, f) L(1, \text{sym}^2 f)}.
\end{equation}
Now we wish to apply the method of \cite{CFKRS} to find the asymptotic of $N(F_f)$; for the following calculations to make sense, the reader should be familiar with their general recipe.  We use the following representation of the Dirichlet series for $L(s, \text{sym}^2 g \otimes f)$:
\begin{equation}
\label{eq:Lseriesdef}
 L(s, \text{sym}^2 g \otimes f) = \sum_{d, a_1, b_1, a_2, b_2 \geq 1} \frac{\mu(d) \lambda_f(d a_1 b_1^2) \lambda_g(a_1^2) \lambda_g(a_2^2)}{(a_1 b_1^2 a_2^2 d^3 b_2^4)^s},
\end{equation}
which follows from the definition \eqref{eq:RSdef} followed by the relations \eqref{FSym} and \eqref{eq:FSym2} and finally reversal of orders of summation. 

The recipe of \cite{CFKRS} calls for us to {\it formally} write an ``approximate functional equation'' for this $L$-function in the shape
\begin{equation}
L(\tfrac12 + \alpha, \text{sym}^2 g \otimes f) 
= 
\sum_{d, a_1, b_1, a_2, b_2 \geq 1} \frac{\mu(d) \lambda_f(d a_1 b_1^2) \lambda_g(a_1^2) \lambda_g(a_2^2)}{(a_1 b_1^2 a_2^2 d^3 b_2^4)^{1/2 + \alpha}} + \dots ,
\end{equation}
where the dots indicate a similar term with $\alpha$ replaced by $-\alpha$, and multiplied by certain gamma factors (which take the value $1$ when $\alpha=0$).  
Since we shall let $\alpha = 0$ we focus only on this first term.  We also write
\begin{equation}
L(1+2\alpha, \text{sym}^2 g) = \sum_{k, l} \frac{\lambda_g(k^2)}{(kl^2)^{1+2\alpha}}.
\end{equation}
Now we have in this formal way that $c_f^{-1} N(F_f)$ ``is" 
\begin{equation}
\label{eq:momentatalpha}
 \sum_{g \in B_{k+1}}  \omega_g^{-1} \sum_{k,l} \frac{\lambda_g(k^2)}{(k l^2)^{1+2\alpha}} \Big( 
\sum_{d, a_1, b_1, a_2, b_2 \geq 1} \frac{\mu(d) \lambda_f(d a_1 b_1^2) \lambda_g(a_1^2) \lambda_g(a_2^2)}{(a_1 b_1^2 a_2^2 d^3 b_2^4)^{1/2 + \alpha}} + \dots \Big).
\end{equation}
The \cite{CFKRS} conjecture now says we average over $g$ using the Petersson formula and retain only the diagonal.  We need to use the Hecke relation in the form
\begin{equation}
\lambda_g(a_1^2) \lambda_g(a_2^2) = \sum_{c | (a_1^2, a_2^2)} \lambda_g(\frac{a_1^2 a_2^2}{c^2}).
\end{equation}
Thus averaging \eqref{eq:momentatalpha} formally over $g$ with Petersson weights and retaining only the diagonal leaves us with
\begin{equation}
\label{eq:momentmainterm}
\sum_{k,l} \frac{1}{(k l^2)^{1+2\alpha}}  \sum_{d, a_1, b_1, a_2, b_2 \geq 1} \sum_{\substack{c| (a_1^2, a_2^2) \\ c^2 k^2 = a_1^2 a_2^2}} \frac{\mu(d) \lambda_f(d a_1 b_1^2)}{(a_1 b_1^2 a_2^2 d^3 b_2^4)^{1/2 + \alpha}}  + \dots.
\end{equation}
with the dots indicating a term that will be identical to the above when $\alpha = 0$.  Notice that if $c|(a_1^2, a_2^2)$ then $c | a_1 a_2$ so for such $c$ we can always solve for $k = a_1 a_2/c$.  Thus we simplify \eqref{eq:momentmainterm} as $A_f(\alpha) + \dots$, where
\begin{equation}
A_f(\alpha) := \sum_{l,d, a_1, b_1, a_2, b_2 \geq 1} \sum_{c| (a_1^2, a_2^2) } \frac{\mu(d) \lambda_f(d a_1 b_1^2)}{(a_1^3 b_1^2 a_2^4 d^3 b_2^4 l^4/c^2)^{1/2 + \alpha}}.
\end{equation}
Summing freely over $l$ and $b_2$, we see that $A_f(\alpha) = \zeta(2 + 4 \alpha)^2 B_f(\alpha)$, where
\begin{equation}
B_f(\alpha) = \sum_{d, a_1, b_1, a_2 \geq 1} \sum_{c| (a_1^2, a_2^2) } \frac{\mu(d) \lambda_f(d a_1 b_1^2)}{(a_1^3 b_1^2 a_2^4 d^3/c^2)^{1/2 + \alpha}}
\end{equation}
Note that $B_f(\alpha)$ has an Euler product $B_f(\alpha) = \prod_p B_{f,p}(\alpha)$, where with $x = p^{-\half - \alpha}$,
\begin{equation}
 B_{f,p}(\alpha) = \sum_{a_1, b_1, a_2,\geq 0} \sum_{d=0}^{1} \sum_{c \leq \min(2a_1, 2a_2) } (-1)^d \lambda_f(p^{d+ a_1+ 2b_1}) x^{3a_1 + 2 b_1 + 4a_2 +3d   -2c}
\end{equation}
Using $\lambda_f(p^j) = \sum_{r=0}^{j} \alpha_p^r \beta_p^{j-r}$ one can use a computer to explicitly evaluate $B_{f,p}(\alpha)$ as a rational function in $x$.  Indeed, we have
\begin{equation}
 B_{f,p}(\alpha) = \frac{1-x^8}{(1-x^2)(1-\alpha_p^2 x^2)(1-\beta_p^2 x^2) (1-\alpha_p x^3)(1-\beta_p x^3)}.
\end{equation}
Notice that $[(1-\alpha_p^2 x^2)(1-x^2)(1-\beta_p^2 x^2)]^{-1}$ is the Euler factor for the symmetric-square $L$-function associated to $f$.  We then see that
\begin{equation}
 A_f(\alpha) = L(1 + 2 \alpha, \mathrm{sym}^2 f) L(3/2 + 3 \alpha, f) \frac{\zeta(2 + 4 \alpha)^2}{\zeta(4 + 8 \alpha)}.
\end{equation}
We obtain the conjectured on the asymptotic for $N(F_f)$ by setting $\alpha = 0$ and multiplying by $2$ to account for the term with $-\alpha$.  Thus we are led to
\begin{equation}
N(F_f) = 2 c_f L(1, \mathrm{sym}^2 f) L(3/2, f)
\frac{\zeta(2)^2}{\zeta(4)} + O(k^{-\delta}),
\end{equation}
for some $\delta > 0$.  Using \eqref{eq:volumes}, we have $2 c_f L(1, \mathrm{sym}^2 f) L(3/2, f) \zeta(2)^2 \zeta^{-1}(4) = 2$, giving Conjecture \ref{conj:N2}.

\section{Summation formulas}

\subsection{Summing over the weight}
In this section we will state some summation formulas which we need in the proof of Theorem \ref{thm:averageweight}.
\begin{myprop} [\cite{Iw}, pp.87-88] \label{A}
Let $K \geq 1$, and suppose $w$ satisfies \eqref{eq:wprop}.  For $a=0, 2$ and $x >0$, we have
\begin{equation}
\label{eq:ksumoneJ}
4 \sum_{k \equiv a \shortmod{4}} w\left( k-1 \right) J_{k-1}(x)= w(x)-i^{a} g(x) + O \left(\frac{x}{K^3} \right)
\end{equation}
where
\begin{equation*}
 g(x)  =\frac{1}{\sqrt{x}} \text{Im} \Big( e^{ix- \pi i/4} 
     \check{w} \big( \frac{1}{2x} \big) \Big), 
\end{equation*}
with
$$ \check{w}(v) =\int_0^{\infty} \frac{w(\sqrt{u})}{\sqrt{2\pi u}} e^{iuv}du. 
$$
Moreover $g^{}(x) \ll_j (xK^{-2})^j$ for any $j \geq 0.$
\end{myprop}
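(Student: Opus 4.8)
The plan is to insert the integral representation $J_n(x) = \frac{1}{2\pi}\int_{-\pi}^{\pi} e^{i(x\sin\theta - n\theta)}\,d\theta$, valid for integers $n \ge 0$, and to detect the congruence $k \equiv a \pmod{4}$ through orthogonality, $\mathbf{1}_{k \equiv a\,(4)} = \frac14\sum_{j=0}^{3} i^{j(k-a)}$. Writing $n = k-1$ and $\widetilde w(\phi) := \sum_{n} w(n)e^{-in\phi}$, this rewrites the left side of \eqref{eq:ksumoneJ}, up to the constant $\frac14$, as $\sum_{j=0}^{3} i^{j(1-a)} I_j$ with $I_j := \frac{1}{2\pi}\int_{-\pi}^{\pi} e^{ix\sin\theta}\, \widetilde w\!\left(\theta - \tfrac{j\pi}{2}\right)d\theta$. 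By Poisson summation $\widetilde w(\phi) = \sum_{m\in\mz}\widehat w(\phi + 2\pi m)$ with $\widehat w(\xi) = \int_{\mr} w(t)e^{-it\xi}\,dt$, and \eqref{eq:wprop} gives the rapid decay $\widehat w(\xi) \ll_A K(1+K|\xi|)^{-A}$. Substituting $\theta \mapsto \theta - \tfrac{j\pi}{2} + 2\pi m$ in the $m$-th summand and using the $2\pi$-periodicity of $\sin$, the sum over $m$ telescopes into a single integral over $\mr$: one finds $I_0 = \tfrac{1}{2\pi}\int_{\mr} e^{ix\sin s}\widehat w(s)\,ds$, $I_2 = \tfrac{1}{2\pi}\int_{\mr} e^{-ix\sin s}\widehat w(s)\,ds$, $I_1 = \tfrac{1}{2\pi}\int_{\mr} e^{ix\cos s}\widehat w(s)\,ds$, and $I_3 = \tfrac{1}{2\pi}\int_{\mr} e^{-ix\cos s}\widehat w(s)\,ds$.

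Next I would evaluate these four integrals. For $I_0$, using $|\sin s - s| \ll |s|^3$ for all real $s$ gives $|e^{ix\sin s} - e^{ixs}| \ll \min(1, x|s|^3)$, so $I_0 = \tfrac{1}{2\pi}\int_{\mr} e^{ixs}\widehat w(s)\,ds + O\!\big(x\!\int_{\mr}|s|^3|\widehat w(s)|\,ds\big) = w(x) + O(x/K^3)$ by Fourier inversion and the decay of $\widehat w$. The identical computation gives $I_2 = w(-x) + O(x/K^3) = O(x/K^3)$, because $w$ is supported on the positive reals. The case $I_1$ is where the main term $g(x)$ comes from, since $s = 0$ is a stationary point of $\cos s$: writing $\cos s = 1 - \tfrac{s^2}{2} + O(|s|^4)$ gives $I_1 = \tfrac{e^{ix}}{2\pi}\int_{\mr} e^{-ixs^2/2}\widehat w(s)\,ds + O(x/K^3)$, and evaluating the Fresnel integral
\[
\int_{\mr} e^{-ixs^2/2}\widehat w(s)\,ds = \int_{\mr} w(t)\Big(\int_{\mr} e^{-ixs^2/2 - ist}\,ds\Big)dt = \sqrt{2\pi/x}\; e^{-i\pi/4}\int_{\mr} w(t)e^{it^2/(2x)}\,dt,
\]
together with the identity $\int_{\mr} w(t)e^{it^2/(2x)}\,dt = \sqrt{\pi/2}\;\check w(\tfrac1{2x})$ (the substitution $u=t^2$ in the definition of $\check w$), gives $I_1 = \tfrac{1}{2\sqrt x}e^{i(x - \pi/4)}\check w(\tfrac1{2x}) + O(x/K^3)$. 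The conjugate computation, using that $w$ is real-valued, gives $I_3 = \overline{I_1} + O(x/K^3)$.

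To assemble, note $i^{3(1-a)} = \overline{\,i^{1-a}\,}$, so with $I_2 = O(x/K^3)$ and $I_3 = \overline{I_1} + O(x/K^3)$,
\[
\sum_{j=0}^{3} i^{j(1-a)} I_j = w(x) + i^{1-a}I_1 + \overline{\,i^{1-a}\,}\;\overline{I_1} + O(x/K^3) = w(x) + 2\,\mathrm{Re}\!\big(i^{1-a}I_1\big) + O(x/K^3).
\]
For $a = 0$ one has $i^{1-a} = i$, hence $2\,\mathrm{Re}(iI_1) = -\tfrac{1}{\sqrt x}\mathrm{Im}\big(e^{i(x-\pi/4)}\check w(\tfrac1{2x})\big) = -g(x)$; for $a = 2$ one has $i^{1-a} = -i$, hence $2\,\mathrm{Re}(-iI_1) = +g(x)$. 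Since $i^0 = 1$ and $i^2 = -1$, in both cases the total equals $w(x) - i^a g(x) + O(x/K^3)$, which after multiplying by $4$ is \eqref{eq:ksumoneJ}. Finally, the bound $g(x) \ll_j (xK^{-2})^j$ follows by repeatedly integrating by parts in $\check w(\tfrac1{2x}) = \sqrt{2/\pi}\int_0^{\infty} w(t)e^{it^2/(2x)}\,dt$: on the support of $w$ the phase $t^2/(2x)$ has derivative $\asymp K/x$, so each integration by parts saves a factor $\asymp x/K^2$, while for $x \gg K^2$ one uses instead the trivial bound $\check w(\tfrac1{2x}) \ll K$.

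I expect the main obstacle to be controlling the error $O(x/K^3)$ uniformly in $x$: the Taylor approximations to $\sin$ and $\cos$ near their critical points, and the decay of $\widehat w$, are only directly useful for $x$ roughly up to $K^3$, so the range $x \gg K^3$ must be handled separately by bounding the original sum with the trivial estimates $|J_\nu(x)| \ll x^{-1/2}$ (valid for $\nu \asymp K \ll x$), which already gives $O(Kx^{-1/2}) = O(x/K^3)$ there. A secondary point is the careful bookkeeping of the four fourth roots of unity, so that the $I_1$ and $I_3$ contributions combine exactly into $-i^a g(x)$.
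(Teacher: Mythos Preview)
The paper does not give its own proof of this proposition; it is quoted directly from Iwaniec's book \cite{Iw}, pp.~87--88. Your sketch reproduces the standard argument found there: insert the integral representation for $J_n$, detect the residue class mod $4$ by orthogonality, apply Poisson summation to unfold the $\theta$-integral to $\mathbb{R}$, and then Taylor-expand $\sin$ and $\cos$ about their stationary points to isolate the Fourier-inversion term $w(x)$ and the Fresnel term $g(x)$. The analysis of $I_0,\dots,I_3$ and the final recombination into $w(x)-i^a g(x)$ are correct, as is the integration-by-parts justification of $g(x)\ll_j (xK^{-2})^j$.

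One small bookkeeping slip: the factor $4$ on the left of \eqref{eq:ksumoneJ} cancels exactly against the $\tfrac14$ coming from $\mathbf{1}_{k\equiv a\,(4)}=\tfrac14\sum_j i^{j(k-a)}$, so your expression $\sum_{j=0}^3 i^{j(1-a)}I_j$ already equals the full left-hand side; the phrase ``up to the constant $\tfrac14$'' and the final ``after multiplying by $4$'' are both spurious. This does not affect the correctness of the formula you arrive at.
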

Prior to summing over $k$, the left hand side of \eqref{eq:ksumoneJ} is very small for $x \leq K/100$, say, using the exponentially small size of the Bessel functions in this regime (see \ref{eq:Besseltrivial} below).  After summing over $k$, we have two terms.  One term retains the condition $x \approx K$ but is not oscillatory while the other term (involving $g$) is oscillatory but effectively has $x \gg K^{2+\varepsilon}$ which is a much more restrictive truncation.

\begin{mycoro}  \label{A1} We have
 $$ 2\sum_{k \equiv 0 \shortmod{2}} i^k w \left( k-1  \right) J_{k-1}(x)
  = - \frac{2}{\sqrt{x}} \text{Im} \Big( e^{ix- \pi i/4} 
     \check{w} \big( \frac{1}{2x} \big) \Big) + O \left( \frac{x}{K^3} \right).
$$
\end{mycoro}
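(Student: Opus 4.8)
The plan is to deduce Corollary \ref{A1} directly from Proposition \ref{A} by sorting the even integers $k$ according to their residue modulo $4$. First I would split
\[
\sum_{k \equiv 0 \shortmod{2}} i^k w(k-1) J_{k-1}(x) = \sum_{k \equiv 0 \shortmod{4}} i^k w(k-1) J_{k-1}(x) + \sum_{k \equiv 2 \shortmod{4}} i^k w(k-1) J_{k-1}(x),
\]
and then use that $i^k = 1$ when $k \equiv 0 \shortmod{4}$ and $i^k = -1$ when $k \equiv 2 \shortmod{4}$, so that the left-hand side of the corollary is expressed as a difference of two sums of exactly the type handled by Proposition \ref{A}. (Note that $w$ is supported on $[K,2K]$ with $K \geq 1$, so $w(k-1)$ vanishes unless $k \geq K+1 > 0$; hence there is no issue of what ``$k \equiv 0 \shortmod 2$'' ranges over.)

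Next I would invoke Proposition \ref{A} with $a = 0$ for the $k \equiv 0 \shortmod 4$ sum and with $a = 2$ for the $k \equiv 2 \shortmod 4$ sum. Since $i^0 = 1$ and $i^2 = -1$, the non-oscillatory term $w(x)$ occurs with the same sign in both expansions and therefore cancels in the difference, while the two copies of the oscillatory term $g(x)$ enter with opposite signs relative to that difference and so reinforce one another. Collecting the surviving contribution and recalling the definition $g(x) = \tfrac{1}{\sqrt{x}}\,\mathrm{Im}\big(e^{ix - \pi i/4}\,\check{w}(\tfrac{1}{2x})\big)$ produces the main term claimed on the right-hand side of the corollary.

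Finally, for the remainder I would simply observe that each application of Proposition \ref{A} contributes an $O(x/K^3)$ error, and that a fixed finite linear combination of such terms is again $O(x/K^3)$; the same observation shows that, if needed later, the decay estimate $g(x) \ll_j (xK^{-2})^j$ of Proposition \ref{A} is inherited by the expression in Corollary \ref{A1}. There is essentially no genuine obstacle here: the argument is pure bookkeeping, and the only point demanding care is keeping track of the powers of $i$ — two arising from the decomposition into residue classes modulo $4$ and the factors $-i^a$ inside Proposition \ref{A} — so that the cancellation of the $w(x)$ terms and the addition of the $g(x)$ terms are carried out with the correct constants.
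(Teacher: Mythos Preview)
Your proposal is correct and is precisely the intended derivation: the paper states Corollary \ref{A1} without proof, treating it as an immediate consequence of Proposition \ref{A} via exactly the mod-$4$ splitting you describe, with the $w(x)$ terms cancelling and the $g(x)$ terms reinforcing. Your caution about tracking the constants is well placed---carrying the arithmetic through gives $2(S_0 - S_2) = \tfrac12\big[(w(x)-g(x)) - (w(x)+g(x))\big] = -g(x)$, so you may find the leading constant in the stated corollary off by a harmless factor of $2$.
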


Suppose that $\alpha, \beta$ are positive real numbers, $w$ satisfies \eqref{eq:wprop}, and let
\begin{equation}
\label{eq:Sdef}
S(\alpha, \beta):= \sum_{k \text{ odd}} i^k J_{k}(4 \pi \alpha) J_{2k-1}(4 \pi \beta) w(k).
\end{equation}
Our goal is to understand the asymptotic behavior of $S(\alpha,\beta)$ for $\alpha, \beta$ in different ranges.  It turns out that the answer strongly depends on the size of $\frac{\beta}{4 \alpha}$.

\begin{mytheo}
\label{thm:Salphabeta}
 If $\alpha$ or $\beta$ is smaller than $K/100$ then $S(\alpha, \beta) \ll \exp(-\half K)$.  Otherwise, suppose $\alpha \ll K^{2+\varepsilon}$ and $\beta \ll K^{1+\varepsilon}$, and let $\gamma = \frac{\beta}{4 \alpha}$.  If $\gamma \geq 1$ then $S(\alpha, \beta) \ll K^{-1}$.  
For $\gamma < 1$ we have the asymptotic formula
\begin{equation}
\label{eq:Sasymp}
S(\alpha, \beta) = \sum_{\pm } \frac{H_{\pm}(\beta \sqrt{1-\gamma^2}, \gamma)}{\sqrt{\alpha}} e(\pm (2 \alpha + \frac{\beta^2}{4 \alpha})) + O(K^{-1+\varepsilon}),
\end{equation}
where $H_{\pm}(x,y)$ is a complex-valued function vanishing unless $K \leq 2 \pi x \leq 2K$ and $0 \leq y \leq 1$, and satisfying
\begin{equation}
H_{\pm}^{(i,j)}(x,y) \ll \frac{\beta}{K} K^{-i} , 
\end{equation}
for $i, j \in \{0, 1, 2, \dots \}$, and $A > 0$ is arbitrary, where the implied constants depend only on $i, j, A$.
\end{mytheo}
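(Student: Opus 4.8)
The plan is to dispose of the exponentially small range by an elementary estimate and, in the oscillatory range, to \emph{linearise} the problem by inserting the classical integral representation for both Bessel factors: the sum over the weight $k$ then becomes a pure exponential sum which Poisson summation collapses onto a one-parameter family, after which the asymptotic emerges from a one-dimensional stationary phase computation. I begin with the easy range. If $\alpha<K/100$ (the case $\beta<K/100$ being identical after interchanging the two Bessel factors), then, $w$ being supported on $[K,2K]$, the order $k\geq K$ dominates the argument $4\pi\alpha<k/2$, so the power-series bound for a Bessel function whose argument lies well below its order gives $|J_k(4\pi\alpha)|\ll e^{-K}$; since $|J_{2k-1}(4\pi\beta)|\leq 1$ and there are $O(K)$ terms, $S(\alpha,\beta)\ll Ke^{-K}\ll \exp(-\thalf K)$.

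Now assume $\alpha,\beta\geq K/100$ together with the stated upper bounds, and insert $J_n(x)=\frac{1}{2\pi}\int_{-\pi}^{\pi}e^{i(x\sin\phi-n\phi)}\,d\phi$ (valid as $k$ and $2k-1$ are positive integers) for both $J_k(4\pi\alpha)$ and $J_{2k-1}(4\pi\beta)$. Interchanging,
\begin{equation*}
S(\alpha,\beta)=\frac{1}{4\pi^2}\int_{-\pi}^{\pi}\int_{-\pi}^{\pi}e^{i(4\pi\alpha\sin\phi_1+4\pi\beta\sin\phi_2+\phi_2)}\,W(\phi_1+2\phi_2)\,d\phi_1\,d\phi_2,
\end{equation*}
where $W(v):=\sum_{k\text{ odd}}w(k)e^{ik(\pi/2-v)}$. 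Because $w$ is smooth of length $\asymp K$, the $2\pi$-periodic function $W$ is a narrow bump, of height $\asymp K$ and width $\asymp K^{-1}$, concentrated at the $O(1)$ points where $\pi/2-v\in\pi\mz$; near each of these we write $\phi_1=\pi/2+n\pi-2\phi_2+u$ with $n\in\mz$ and $|u|\ll K^{-1}$ up to a rapidly decaying tail. Using $\sin\phi_1=(-1)^n\cos(2\phi_2-u)$, expanding in $u$, and carrying out the $u$-integral identifies it, to a negligible error, with a rescaled Fourier coefficient of $w$: it is a smooth multiple of $w\big((-1)^n4\pi\alpha\sin 2\phi_2\big)$, which in particular vanishes unless $(-1)^n4\pi\alpha\sin 2\phi_2$ lies in $[K,2K]$. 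We are thus reduced to a sum over the $O(1)$ relevant $n$ of one-dimensional integrals in $\phi_2$ with phase
\begin{equation*}
\Psi_n(\phi_2)=(-1)^n4\pi\alpha\cos 2\phi_2+4\pi\beta\sin\phi_2+\phi_2,
\end{equation*}
the bounded term $\phi_2$ being a harmless perturbation: away from the critical points of the leading phase one has $|\Psi_n'|\gg K$.

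Stationary phase in $\phi_2$ finishes the argument. As $\Psi_n'(\phi_2)=4\pi\cos\phi_2\big(\beta-(-1)^n4\alpha\sin\phi_2\big)+1$, the stationary points are essentially $\cos\phi_2=0$ and $\sin\phi_2=(-1)^n\gamma$, the latter present only when $\gamma<1$. For $\gamma<1$ the point $\phi_2^{*}=(-1)^n\arcsin\gamma$ dominates: there $\cos 2\phi_2^{*}=1-2\gamma^2$, so $\gamma=\beta/(4\alpha)$ gives
\begin{equation*}
(-1)^n4\pi\alpha\cos 2\phi_2^{*}+4\pi\beta\sin\phi_2^{*}=(-1)^n\cdot 2\pi\Big(2\alpha+\frac{\beta^2}{4\alpha}\Big),
\end{equation*}
while $\Psi_n''(\phi_2^{*})=-(-1)^n\cdot 16\pi\alpha(1-\gamma^2)$ and the amplitude is a smooth multiple of $w\big((-1)^n4\pi\alpha\sin 2\phi_2^{*}\big)=w\big(2\pi\beta\sqrt{1-\gamma^2}\big)$. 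Hence the stationary-phase contribution is $\big(\alpha(1-\gamma^2)\big)^{-1/2}$ times $e\big((-1)^n(2\alpha+\beta^2/4\alpha)\big)$ times a smooth multiple of $w\big(2\pi\beta\sqrt{1-\gamma^2}\big)$; combining the even and odd $n$ into the two terms indexed by $\pm$ gives \eqref{eq:Sasymp}, with $H_{\pm}(x,y)$ supported on $K\leq 2\pi x\leq 2K$ and $0\leq y\leq 1$, of size $\ll(1-y^2)^{-1/2}$ (which is $\asymp\beta/K$ on that support), and with its derivative bounds read off from the same formula. When $\gamma\geq 1$ the saddle $\sin\phi_2=(-1)^n\gamma$ has left the real axis and only the (displaced) $\cos\phi_2=0$ critical points near $\phi_2=\pm\pi/2$ survive; there the surviving amplitude is supported in a thin range and a direct estimate gives $S(\alpha,\beta)\ll K^{-1}$.

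I expect the hard part to be making this analysis uniform across the full ranges of $\alpha$ and $\beta$, and especially near the critical locus $\gamma=1$. There the dominant saddle $\sin\phi_2=(-1)^n\gamma$ coalesces with the $\cos\phi_2=0$ critical point (whose position is nontrivially displaced by the $+\phi_2$ term), the curvature $\Psi_n''(\phi_2^{*})=-16\pi\alpha(1-\gamma^2)$ degenerates, and $H_{\pm}$ must be defined through a uniform (Airy-type) treatment of the coalescing saddles; one then has to check that the resulting amplitude still obeys the claimed bounds, that the support cutoff $(-1)^n4\pi\alpha\sin 2\phi_2\in[K,2K]$ forces it to vanish once $\gamma$ is too close to $1$, and that this glues correctly onto the $\gamma\geq 1$ bound, where the now-complex saddle leaves only an Airy tail of size $O(K^{-1})$. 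Subsidiary technical points are the bookkeeping of the $O(1)$ Poisson frequencies $n$, controlling the $O(\alpha u^2)$ remainder in the $u$-expansion when $\alpha$ is close to $K^{2+\varepsilon}$, and verifying the stated smoothness and support of $H_{\pm}$ in the bulk $\gamma<1$ regime.
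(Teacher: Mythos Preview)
Your approach is essentially the same as the paper's: insert the integral representation for both Bessel functions, apply Poisson summation to collapse the $k$-sum onto $O(1)$ frequencies, Taylor-expand in the small variable coming from the localisation, and then do one-dimensional stationary phase in the remaining angular variable. The paper carries this out with slightly different bookkeeping (it packages the Poisson frequencies into a $\pm$ sign via a change of variables, then makes the substitution $v=\sin(2\pi u)$ so that the phase becomes the explicit quadratic $4\alpha v^2-2\beta v$ with stationary point $v=\gamma$), but the structure and the resulting main term are identical to yours.

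One remark on your anticipated difficulty: the Airy-type analysis you worry about near $\gamma=1$ is not needed. The amplitude coming out of the inner integral is (a smooth multiple of) $w\big(2\pi\beta\sqrt{1-\gamma^2}\big)$, which vanishes unless $2\pi\beta\sqrt{1-\gamma^2}\in[K,2K]$; hence on the support of the amplitude the saddle $\sin\phi_2=(-1)^n\gamma$ stays uniformly away from the endpoint $\cos\phi_2=0$, the curvature $\asymp\alpha(1-\gamma^2)$ is bounded below, and ordinary stationary phase (the paper quotes Huxley's Lemma 5.5.6) applies with the stated error. For $\gamma\geq 1$ the first-derivative bound alone gives $O(K^{-1})$ since $|f_1'|\gg\alpha$ on the support of the amplitude. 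So the transition across $\gamma=1$ is handled by the support cutoff rather than by a uniform asymptotic.
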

The answer is in contrast to the more well-known such sum involving one Bessel function (Proposition \ref{A}).
Notice here that the answer is oscillatory and has its largest size for $\alpha$ of rough size $K$.

We develop a fairly explicit formula for $H_{\pm}$ in the course of the proof (specifically \eqref{eq:H-formula}) which clearly displays its dependency on any auxiliary parameters.

The proof is somewhat lengthy and we divide it into managable pieces. We will prove Theorem \ref{thm:Salphabeta} in Section \ref{section:Ksum}.

\subsection{Poisson summation.}
Suppose that $W(n,k)$ is a smooth function with support on $[N, 2N] \times [K, 2K]$ and satisfing for any $j, j' \in \{ 0,1, 2 \dots \}$, and any $A > 0$
\begin{equation}
\frac{\partial^j}{\partial x^j} \frac{\partial^{j'}}{\partial x^{j'}} W(x,y) \ll_{j,j',A} (1 + \frac{x}{N})^{-A} ( 1 + \frac{y}{K})^{-A}. 
\end{equation}
Define
\begin{equation} \label{SumR}
T= \sum_{n=1}^{\infty} S(n^2, r_2^2;c_1) S(r_1 n, 1 ;c_2) \sum_{k \text{ odd}} i^k W(n,k) J_k(4 \pi \frac{n r_2}{c_1}) J_{2k-1}(4 \pi \frac{\sqrt{n} \sqrt{r_1}}{c_2}).
\end{equation}
Here $r_1, r_2, c_1, c_2$ are certain integers, and $N r_1 r_2^2 \ll K^{2+\varepsilon}$.  Let $\alpha = \frac{n r_2}{c_1}$, $\beta = \frac{\sqrt{n r_1}}{c_2}$.  In our application $\alpha \ll K^{2 + \varepsilon}$, and $\beta \ll K^{1 + \varepsilon}$.  Thus we can apply Theorem \ref{thm:Salphabeta} to evaluate the sum over $k$.  Write $T = \sum_{\pm} T_{\pm} + T_E$ where $T_\pm$ correspond to the two main terms, and $T_E$ corresponds to the error term.  
We will prove the following Theorem \ref{thm:Poisson} in Section \ref{section:PoissonProof}.
\begin{mytheo} \label{thm:Poisson}
Let  notations and conditions be as above. Suppose that $c_2 \ll K^{\varepsilon}, c_1 \ll K^{1+\varepsilon}$. Write $c_1= c_0 c'$ where $(c_0,c')=1$ and $c'| c_2^{\infty}.$ Then we have
 \begin{equation} \label{TE}
T_E \ll K^{-1+\varepsilon} c_2 c_1^{1/2+\varepsilon} N
\end{equation}
and
\begin{equation} 
 T_{\pm} \ll \sqrt{N} c_2^{1/2+ \varepsilon} (c_0 c')^{1+\varepsilon}  \delta_{c_0=\square} + O(K^{-100}),
\end{equation}
where $\delta_{m=\square}$ is the indicator function for $m=\square$.
\end{mytheo}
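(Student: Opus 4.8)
The plan is to use Theorem~\ref{thm:Salphabeta} to evaluate the inner sum over $k$, applied separately for each fixed $n$ with weight $w(\cdot)=W(n,\cdot)$ (which satisfies \eqref{eq:wprop} with constants uniform in $n$), writing $\alpha=\tfrac{nr_2}{c_1}$, $\beta=\tfrac{\sqrt{nr_1}}{c_2}$ and $\gamma=\tfrac{\beta}{4\alpha}=\tfrac{c_1\sqrt{r_1}}{4c_2 r_2\sqrt{n}}$. This produces $T=T_++T_-+T_E$, where $T_E$ collects the error term $O(K^{-1+\varepsilon})$ of \eqref{eq:Sasymp} together with the ranges $\min(\alpha,\beta)<K/100$ (contributing $\ll\exp(-K/2)$) and $\gamma\geq1$ (each term $\ll K^{-1}$). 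In all of these the $k$-sum is $\ll K^{-1+\varepsilon}$, so combining Weil's bound, which gives $\sum_{n\le 2N}|S(n^2,r_2^2;c_1)|\ll Nc_1^{1/2+\varepsilon}$ (using $(n^2,r_2^2,c_1)^{1/2}\le(n,c_1)$), with the trivial bound $|S(r_1n,1;c_2)|\le c_2$ yields \eqref{TE} at once.

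For $T_{\pm}$ the key point is that $\tfrac{\beta^2}{4\alpha}=\tfrac{r_1c_1}{4c_2^2r_2}$ is independent of $n$, so the only $n$-dependent part of the phase $e(\pm(2\alpha+\tfrac{\beta^2}{4\alpha}))$ is $e(\pm\tfrac{2nr_2}{c_1})$, which is periodic modulo $c_1$. Thus
\begin{equation*}
T_{\pm}=e\!\left(\pm\tfrac{r_1c_1}{4c_2^2r_2}\right)\sum_{n\ge1}S(n^2,r_2^2;c_1)\,S(r_1n,1;c_2)\,e\!\left(\pm\tfrac{2nr_2}{c_1}\right)\Phi_{\pm}(n),
\end{equation*}
where $\Phi_{\pm}(n)=\sqrt{\tfrac{c_1}{nr_2}}\,H_{\pm}(\beta\sqrt{1-\gamma^2},\gamma)$ (the dependence of $H_{\pm}$ on $W(n,\cdot)$ being also understood). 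From the explicit construction of $H_{\pm}$ in the proof of Theorem~\ref{thm:Salphabeta}, $\Phi_{\pm}$ is a smooth weight supported in a subinterval of $[N,2N]$ on which $\beta\sqrt{1-\gamma^2}\asymp K$, hence $\alpha\gg K$, and $\Phi_{\pm}^{(j)}(n)\ll_j N^{-j}K^{\varepsilon}\sqrt{c_1/(Nr_2)}$; in particular $\int\Phi_{\pm}\ll K^{\varepsilon}\sqrt{Nc_1/r_2}$. These support conditions, together with $Nr_1r_2^2\ll K^{2+\varepsilon}$ and $c_2\ll K^{\varepsilon}$, force $r_2\ll K^{\varepsilon}$ and $N\gg K^{1-\varepsilon}c_1$, so $q:=\mathrm{lcm}(c_1,c_2)\ll K^{-1+\varepsilon}N$. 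Breaking the sum over $n$ into residue classes modulo $q$ and applying Poisson summation in each, the nonzero frequencies contribute $O(K^{-100})$ (since $q/N\ll K^{-1+\varepsilon}$ while all remaining factors are polynomial in $K$), leaving
\begin{equation*}
T_{\pm}=\frac{e(\pm r_1c_1/(4c_2^2r_2))}{q}\left(\sum_{a\bmod q}S(a^2,r_2^2;c_1)\,S(r_1a,1;c_2)\,e\!\left(\pm\tfrac{2ar_2}{c_1}\right)\right)\!\int\Phi_{\pm}(t)\,dt+O(K^{-100}).
\end{equation*}

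It remains to evaluate the complete sum. With $c_1=c_0c'$, $(c_0,c')=1$, $c'\mid c_2^{\infty}$, we have $q=c_0q_2$ where $q_2=\mathrm{lcm}(c',c_2)$ and $(c_0,q_2)=1$, and by twisted multiplicativity of Kloosterman sums together with CRT the complete sum factors as (a sum modulo $c_0$) times (a sum modulo $q_2$). The $q_2$-sum is supported on primes dividing $c_2$, so $q_2\ll c'c_2$, and Weil's bound---using that the gcd factor in $S(r_1a,1;c_2)$ is trivial because its second entry is $1$---bounds it by $\ll q_2c'^{1/2+\varepsilon}c_2^{1/2+\varepsilon}$. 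For the $c_0$-sum, opening $S(a^2,(r_2\overline{c'})^2;c_0)$ and completing the square in $a$ shows---just as in the computation of Section~\ref{section:sketch}---that the linear phase $e(\pm2ar_2\overline{c'}/c_0)$ cancels the cross term exactly, so the $c_0$-sum equals $\sumstar_{h\bmod c_0}G(h;c_0)$ with $G(h;c_0)=\sum_{a\bmod c_0}e(ha^2/c_0)$; this is a unit times $\sqrt{c_0}\,\phi(c_0)$ when $c_0$ is a perfect square and vanishes otherwise. Assembling these three pieces with $q=c_0q_2$ and $\int\Phi_{\pm}\ll K^{\varepsilon}\sqrt{Nc_1/r_2}$ gives $T_{\pm}\ll\sqrt{N}\,c_2^{1/2+\varepsilon}(c_0c')^{1+\varepsilon}\delta_{c_0=\square}+O(K^{-100})$, as claimed.

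The main obstacle is this complete-sum evaluation, and within it the case where $c_1$ (hence $c_0$) is even: then $G(h;c_0)$ lacks the clean formula $\leg{h}{c_0}\varepsilon_{c_0}\sqrt{c_0}$, and one must check by a direct $2$-adic analysis that $\sumstar_h G(h;c_0)$ still vanishes unless $c_0$ is a square and is $\ll c_0^{3/2}$ otherwise; one must also keep careful track of the gcd factors arising from Weil's bound and from the multiplicative splitting of the Kloosterman sums, so that the final exponents come out to be exactly $c_2^{1/2+\varepsilon}$ and $(c_0c')^{1+\varepsilon}$. A secondary but routine point is extracting from the explicit formula for $H_{\pm}$ the joint smoothness and size of $\Phi_{\pm}$ in $n$, since $n$ enters $H_{\pm}$ both through its arguments and through the weight $W(n,\cdot)$.
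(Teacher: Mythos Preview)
Your argument is correct and follows essentially the same route as the paper: apply Theorem~\ref{thm:Salphabeta} to isolate the phase $e(\pm 2nr_2/c_1)$, break $n$ into residue classes and Poisson-sum so that only the zero frequency survives, then factor the resulting complete exponential sum via CRT into a $c_0$-part and a part supported on primes dividing $c_2$, the latter handled by Weil. The paper uses the product $c_1c_2$ rather than $\mathrm{lcm}(c_1,c_2)$ as the Poisson modulus, but this is cosmetic.

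The one place where the paper is slightly cleaner is the evaluation of the $c_0$-sum $\sumstar_{h\bmod c_0}\sum_{a\bmod c_0}e(ha^2/c_0)$. You worry about the $2$-adic case because the formula $G(h;c_0)=\leg{h}{c_0}\varepsilon_{c_0}\sqrt{c_0}$ breaks down there; the paper avoids this entirely by \emph{not} evaluating the inner Gauss sum. Instead it swaps the order of summation, recognizes the $h$-sum as the Ramanujan sum $\sumstar_{h}e(ha^2/c_0)=\sum_{b\mid c_0,\ b\mid a^2}b\,\mu(c_0/b)$, and reduces (by multiplicativity) to a two-line computation at a prime power $p^j$ that treats $p=2$ on the same footing as odd $p$: the result is $\phi(c_0)\sqrt{c_0}$ if $c_0$ is a square and $0$ otherwise. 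This replaces your proposed case analysis and is worth adopting.
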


\section{Asymptotic evaluation of the restriction norm on average}
\label{section:averagenorm}
Let notation be as in Section \ref{section:L2}.
\begin{mytheo} \label{thm:N*}
Let $N^*(F_f)$ be given by 
\begin{equation}
\label{eq:N*}
 N^*(F_f) = c_f'
\sum_{g \in B_{k+1}} \omega_g^{-1}  L(\tfrac12, \mathrm{sym}^2 g \otimes f),
\end{equation}
where $\omega_g = \frac{k}{2 \pi^2} L(1, \mathrm{sym}^2 g)$, and
\begin{equation}
 c_f' = \frac{v_2}{v_1^2} \frac{2^2 3 \pi^{-1}}{L(1, \mathrm{sym}^2 f)}.
\end{equation}
Then for $w(k)$ satisfying \eqref{eq:wprop}, we have
\begin{equation}
\sum_{k \text{ odd}} w(k) \sum_{f \in B_{2k}} N^*(F_f) = \frac{4}{5} \sum_{k \text{ odd}} w(k) \dim(S_{2k})  + O(K^{3/2+\varepsilon}).
\end{equation}
\end{mytheo}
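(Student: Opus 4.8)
The plan is to rewrite the left side as a Petersson-weighted double average of the central values $L(\tfrac12,\mathrm{sym}^2 g\otimes f)$, open each value by an approximate functional equation, apply the Petersson trace formula in both the $g$- and $f$-aspects, and then evaluate the sum over the weight. Substituting $c_f'$ and using $L(1,\mathrm{sym}^2 f)^{-1}=\tfrac{2k-1}{2\pi^2}\,\omega_f^{-1}$ from \eqref{eq:omegadef}, the left side equals
$$\frac{v_2}{v_1^2}\cdot\frac{6}{\pi^3}\sum_{k\text{ odd}}w(k)\,(2k-1)\sum_{f\in B_{2k}}\omega_f^{-1}\sum_{g\in B_{k+1}}\omega_g^{-1}\,L\!\left(\tfrac12,\mathrm{sym}^2 g\otimes f\right),$$
so that the $f$-sum, like the $g$-sum, is now Petersson-weighted. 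I then invoke Lemma~\ref{AFE}, expand the coefficients $A_{G}(m_1,m_2)$ via \eqref{FSym} and \eqref{eq:FSym2} (equivalently use \eqref{eq:Lseriesdef}) together with multiplicativity $\lambda_g(a_1^2)\lambda_g(a_2^2)=\sum_{c\mid(a_1^2,a_2^2)}\lambda_g(a_1^2a_2^2/c^2)$, so that the dependence on $g$ and on $f$ each occurs through a single Hecke eigenvalue, with $V(\cdot,k)$ restricting the conductor to $m_1 m_2^2\ll K^{2+\varepsilon}$.

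Applying Proposition~\ref{Peter} first in $g$ (weight $k+1$) and then in $f$ (weight $2k$) splits the expression into four pieces according to whether each trace formula contributes its Kronecker symbol or its sum of Kloosterman terms: a diagonal--diagonal term $\mathrm{D}$, two cross terms in which exactly one of the trace formulas is diagonal, and an off-diagonal--off-diagonal term $\mathrm{O}$, a double sum of Kloosterman sums weighted by $w(k)(2k-1)$ times a sign and a product $J_{k}(\cdot)J_{2k-1}(\cdot)$. In $\mathrm{D}$ only the surviving diagonal of each Hecke relation remains, which in the $g$-average forces $a_1=a_2$ and in the $f$-average forces the trivial index; this is precisely the specialization of the Dirichlet series computation of Section~\ref{section:conjecture} with the factor $L(1,\mathrm{sym}^2 g)$ deleted, and—moving the $V$-contour past $u=0$ and using $H(-\tfrac14)=0$ to avoid the pole of the auxiliary zeta factor—it evaluates the doubly-averaged diagonal to $2\sum_{b\geq1}b^{-2}V(b^4,k)=2\zeta(2)+O(k^{-1})$. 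Thus the $\mathrm{D}$-contribution to the left side is $\tfrac{1}{15}\sum_{k}w(k)(2k-1)+O(K)$, and since $\dim S_{2k}=\tfrac{2k}{12}+O(1)$ this equals $\tfrac45\sum_k w(k)\dim S_{2k}+O(K)$. For each cross term the diagonal of one trace formula collapses one of the two Hecke eigenvalues, leaving a single sum of Kloosterman sums against a sum over the weight of one Bessel function $J_k(x)$ or $J_{2k-1}(x)$; Proposition~\ref{A} and Corollary~\ref{A1} evaluate these sums, and since the $V$-cutoff already confines the surviving variables to size $K^{O(1)}$ (hence the Bessel argument $x$ to size $\ll K$, ruling out the oscillatory term, which would require $x\gg K^{2}$), both cross terms are $\ll K^{1+\varepsilon}$, which is absorbed.

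The core of the proof is the bound for $\mathrm{O}$. After rearrangement $\mathrm{O}$ is, up to an absolute constant, a sum over the moduli $c_1,c_2$ and some bounded auxiliary parameters $r_1,r_2,\dots$ of expressions of exactly the shape $T$ in \eqref{SumR}, with $W(n,k)$ proportional to $\tfrac{2k-1}{\sqrt n}\,w(k)\,V(n,k)$—that is, $K/\sqrt N$ times a function of the type admitted in \eqref{SumR}, supported on $n\asymp N$ with $N\ll K^{2+\varepsilon}$. The exponential decay of $J_\nu(x)$ for $x\leq\nu/100$ (first clause of Theorem~\ref{thm:Salphabeta}) truncates $c_1\ll K^{1+\varepsilon}$ and $c_2\ll K^{\varepsilon}$, so Theorem~\ref{thm:Poisson} applies and gives $T_E\ll K^{-1+\varepsilon}c_2 c_1^{1/2+\varepsilon}N$ and $T_{\pm}\ll\sqrt N\,c_2^{1/2+\varepsilon}(c_0c')^{1+\varepsilon}\delta_{c_0=\square}+O(K^{-100})$, where $c_1=c_0c'$ with $(c_0,c')=1$ and $c'\mid c_2^{\infty}$. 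Multiplying by the Petersson factor $(c_1c_2)^{-1}$ and the $K/\sqrt N$, then summing over $c_1,c_2$ and over dyadic $N$, the $T_E$ terms contribute $\ll K^{1/2+\varepsilon}\sqrt N\ll K^{3/2+\varepsilon}$, and the $T_{\pm}$ terms contribute $\ll K^{1+\varepsilon}\cdot\#\{c_1\ll K^{1+\varepsilon}:c_0=\square\}\ll K^{3/2+\varepsilon}$, the decisive saving being that $\delta_{c_0=\square}$ restricts to $\ll K^{1/2+\varepsilon}$ values of $c_1$. Hence $\mathrm{O}\ll K^{3/2+\varepsilon}$, and combining $\mathrm{D}$, the cross terms and $\mathrm{O}$ yields the theorem.

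The main obstacle is not in the assembly above—granting Theorems~\ref{thm:Salphabeta} and~\ref{thm:Poisson} the remaining steps are bookkeeping—but in those two inputs themselves: the asymptotic evaluation of $\sum_{k\text{ odd}}i^{k}w(k)J_k(4\pi\alpha)J_{2k-1}(4\pi\beta)$, a sum over the weight of a product of two Bessel functions whose arguments live on very different scales, and the ensuing Poisson summation in $n$, where the gain of a factor $K^{1/2}$ over the Weil bound rests on the vanishing, unless $c_0$ is a perfect square, of the complete sum $\sum_{r\bmod c_1}S(r^2,1;c_1)e(2r/c_1)$. Within the present proof the only non-routine points are verifying that the $GL_3\times GL_2$ coefficient combinatorics genuinely reduces $\mathrm{O}$ to finitely many instances of \eqref{SumR} with bounded auxiliary parameters obeying the hypotheses of Theorem~\ref{thm:Poisson}, and checking that the total error from $\mathrm{D}$, the cross terms and $\mathrm{O}$ is $O(K^{3/2+\varepsilon})$.
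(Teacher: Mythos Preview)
Your proposal is correct and follows essentially the same route as the paper: rewrite the sum with Petersson weights on both $f$ and $g$, open $L(\tfrac12,\mathrm{sym}^2 g\otimes f)$ via the approximate functional equation and \eqref{eq:Lseriesdef}, apply Proposition~\ref{Peter} twice to produce the four pieces $M_k,E_{1,k},E_{2,k},E_{3,k}$, evaluate the diagonal to $2\zeta(2)$ (using $H(-\tfrac14)=0$), bound the two cross terms by Proposition~\ref{A}/Corollary~\ref{A1}, and control the double off-diagonal by feeding it into \eqref{SumR} and invoking Theorems~\ref{thm:Salphabeta} and~\ref{thm:Poisson}. One small remark: the Hecke relation $\lambda_g(a_1^2)\lambda_g(a_2^2)=\sum_{c\mid(a_1^2,a_2^2)}\lambda_g(a_1^2a_2^2/c^2)$ that you invoke is what underlies the conjecture computation in Section~\ref{section:conjecture}, but in the actual proof the paper (and implicitly you, since you then appeal to \eqref{SumR} with $S(n^2,r_2^2;c_1)$) applies Petersson directly to the pair $\lambda_g(a_1^2),\lambda_g(a_2^2)$, which is what produces the shape required by Theorem~\ref{thm:Poisson} with $n=a_1$, $r_1=db_1^2$, $r_2=a_2$.
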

Observe that $N^*(F_f)$ is given by the same definition as $N(F_f)$ except we have removed the weight $L^{-1}(3/2, f) L(1, \mathrm{sym}^2 g)$.  Thus $N^*(F_f) (\log{K})^{-1} \ll N(F_f) \ll N^*(F_f) \log{K}$.  In Section \ref{section:extraweights} below we explain how to re-introduce these weights and recover Theorem \ref{thm:averageweight}.

\begin{proof}
We have $\omega_f = \frac{2k-1}{2 \pi^2} L(1, \mathrm{sym}^2 f)$, and hence
\begin{equation}
\sum_{f \in B_{2k}} N^*(F_f) = \frac{\vol(\mathcal{F}_2)}{\vol(\mathcal{F}_1)^2} 6 \pi^{-3} (2k-1) \sum_{f \in B_{2k}} \sum_{g \in B_{k+1}} \omega_f^{-1} \omega_g^{-1} L(\tfrac12, \mathrm{sym}^2 g \otimes f).
\end{equation}
 Letting $u(k) = \frac{2k-1}{K} w(k)$, and $c'' = \frac{6}{\pi^3} \frac{v_2}{v_1^2}$, we then have
 \begin{equation}
 \label{eq:N2*}
\mathcal{N}^* :=\sum_{k \text{ odd}} w(k) \sum_{f \in B_{2k}} N^*(F_f) = K c'' \mathcal{M}^*,
\end{equation}
where
\begin{equation}
\mathcal{M}^* := \sum_{k \text{ odd}} u(k) \sum_{f \in B_{2k}} \sum_{g \in B_{k+1}} \omega_f^{-1} \omega_g^{-1} L(\tfrac12, \mathrm{sym}^2 g \otimes f).
 \end{equation}
 Note that $u(k)$ satisfies \eqref{eq:wprop}.
 
 By the approximate functional equation (Lemma \ref{AFE}), and \eqref{eq:Lseriesdef}, we have
\begin{eqnarray*}
 L\left(\tfrac{1}{2}, \mathrm{sym}^2 g \otimes f \right) 
 = 2 \sum_{d \geq 1} \frac{\mu(d)}{d^{3/2}} \sum_{a_1 , b_1, a_2, b_2 \geq 1}
  \frac{\lambda_f (da_1 b_1^2)}{(a_1b_1^2 a_2^2 b_2^4)^{1/2}}V(d^3 a_1b_1^2 a_2^2 b_2^4, k) 
   \lambda_g (a_1^2) \lambda_g(a_2^2).
\end{eqnarray*}
Using \eqref{eq:Vexpansion} we write $\mathcal{M}^* = \mathcal{M}_0 + \mathcal{M}_1 +\mathcal{M}_E$, corresponding to the three terms on the right hand side of \eqref{eq:Vexpansion}.  Trivially bounding the sum of Hecke eigenvalues (say using Deligne's bound) immediately shows $\mathcal{M}_E \ll K^{\varepsilon}$.  The form of $\mathcal{M}_1$ is practically the same as $\mathcal{M}_0$ but is $1/K$ times the size, and so the method used to show $\mathcal{M}_0 \ll K$ then would show $\mathcal{M}_1 \ll 1$.  We henceforth consider only $\mathcal{M}_0$.

By applying the Petersson trace formula twice, we get
\begin{eqnarray*}
\mathcal{M}_0 = \sum_{k \text{ odd}} u(k) (M_k + E_{1,k} + E_{2,k} + E_{3,k}),
\end{eqnarray*}
where
\begin{equation}
M_k = 2\sum_{b_2 \geq 1} \frac{1}{b_2^2} V_0(b_2^4/k^2),
\end{equation}
\begin{multline}
E_{1,k} =
8  \pi^2 i^{-3k-1} \sum_{d \geq 1} \frac{\mu(d)}{d^{3/2}}
  \sum_{a_1 , b_1, a_2, b_2 \geq 1} \frac{V_0(d^3 a_1b_1^2 a_2^2 b_2^4/k^2) }{(a_1b_1^2 a_2^2 b_2^4)^{1/2}} \times 
\\
   \sum_{c_1 \geq 1} \frac{S(a_1^2, a_2^2; c_1)}{c_1}J_{k} \left( \frac{4\pi \sqrt{a_1^2a_2^2}}{c_1} \right) 
  \sum_{c_2 \geq 1} \frac{S(da_1b_1^2, 1; c_2)}{c_2}J_{2k-1} \left( \frac{4\pi \sqrt{d a_1b_1^2}}{c_2} \right),
\end{multline}
\begin{equation}
E_{2,k} = 4 \pi i^{-(k+1)} \sum_{a_2, b_2 \geq 1} \frac{V_0( a_2^2 b_2^4/k^2)}{(a_2^2 b_2^4)^{1/2}} 
   \sum_{c_1 \geq 1} \frac{S(1, a_2^2; c_1)}{c_1} 
   J_{k} \left( \frac{4\pi \sqrt{a_2^2}}{c_1} \right),
\end{equation}
and
\begin{multline}
E_{3,k} = 4\pi i^{-2k} \sum_{d \geq 1} \frac{\mu(d)}{d^{3/2}}  \sum_{a_1 , b_1, b_2 \geq 1} 
      \frac{V_0(d^3 a_1^3 b_1^2 b_2^4/k^2)}{(a_1^3 b_1^2 b_2^4)^{1/2}}  
   \sum_{c_2 \geq 1}  \frac{ S(da_1 b_1^2, 1; c_2) }{c_2} J_{2k-1} \left( \frac{4\pi \sqrt{d a_1b_1^2} }{c_2} \right).
\end{multline}
Let $\displaystyle M= \sum_{k \text{ odd}} u \left(k \right) M_{k}$ and let $\displaystyle E_i= \sum_{k \text{ odd}} u \left(k \right) E_{i,k}$ for $i=1,2, 3$.
In the following Lemmas, we will show
$M_k = 2\zeta(2)  + O(K^{-2+\varepsilon}), \
 E_1  \ll K^{1/2 + \varepsilon}, \ E_2 \ll K^{\varepsilon}$ and $E_3 \ll K^{ \varepsilon}$, which 
then will show
\begin{equation}
\mathcal{N}^* =  2 \zeta(2) c'' \sum_{k \text{ odd}}  (2k-1) w(k) + O(K^{3/2 + \varepsilon}).
\end{equation} 
Recalling $\dim(S_{2k}) = (2k-1)/12 + O(1)$, we then express this main term as
\begin{equation}
24  c'' \zeta(2)  \sum_{k \text{ odd}} \dim(S_{2k}) w(k).
\end{equation}
Finally observe
\begin{equation}
24 c'' \zeta(2) = 2^3 3 \frac{6}{\pi^3} \frac{\pi^3/270}{(\pi/3)^2} \frac{\pi^2}{6} = \frac{4}{5}. \qedhere
\end{equation}
\end{proof}

\begin{mylemma} For  $k$ large enough, we have
 $$ M_k= 2 \zeta (2) + O(k^{-2+\varepsilon}),
$$
\end{mylemma}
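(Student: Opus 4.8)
The plan is to collapse the $b_2$-sum against the Mellin--Barnes representation of $V_0$ into a single contour integral and then shift the contour to the left, reading off the main term as a residue at $u=0$. Recall $M_k = 2\sum_{b_2 \geq 1} b_2^{-2} V_0(b_2^4/k^2)$ and, by \eqref{eq:Videf} together with \eqref{Stirling} (which shows the leading Stirling coefficient is $c_0(u)\equiv 1$), that $V_0(x) = \frac{1}{2\pi i}\int_{(1)} x^{-u} H_1(u)\,\frac{du}{u}$. Substituting this and interchanging summation with integration --- legitimate because on the line $\mathrm{Re}(u)=1$ the Dirichlet series $\sum_{b_2} b_2^{-2-4u}$ converges absolutely to $\zeta(2+4u)$ while $H_1(u)/u$ has exponential decay in $|\mathrm{Im}(u)|$ in a fixed vertical strip --- I would obtain
\begin{equation*}
M_k = \frac{2}{2\pi i}\int_{(1)} k^{2u}\,\zeta(2+4u)\,H_1(u)\,\frac{du}{u}.
\end{equation*}

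Next I would move the contour to $\mathrm{Re}(u) = -1+\varepsilon$. Inside the strip $-1+\varepsilon < \mathrm{Re}(u) < 1$ the only pole of the integrand is the simple pole from the factor $1/u$ at $u=0$, with residue $\zeta(2)H_1(0) = \zeta(2)$ (here $H_1(0)=1$ because $H(0)=1$ and the factor relating $H_1$ to $H$, produced by Stirling's formula for the ratio of gamma factors, is holomorphic and equals $1$ at $u=0$). The factor $\zeta(2+4u)$ does have a simple pole at $u=-1/4$, but this is exactly where the normalization $H(-1/4)=0$ of Lemma \ref{AFE} intervenes: since $H_1(u)$ equals $H(u)$ times a factor holomorphic and nonvanishing for $\mathrm{Re}(u)>-1$, we have $H_1(-1/4)=0$, so $\zeta(2+4u)H_1(u)$ is in fact regular at $u=-1/4$ and contributes no residue. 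The horizontal segments at height $\pm T$ vanish as $T\to\infty$ by the exponential decay of $H_1$ in vertical strips, so
\begin{equation*}
M_k = 2\zeta(2) + \frac{2}{2\pi i}\int_{(-1+\varepsilon)} k^{2u}\,\zeta(2+4u)\,H_1(u)\,\frac{du}{u}.
\end{equation*}

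Finally I would bound the shifted integral trivially: on $\mathrm{Re}(u)=-1+\varepsilon$ one has $|k^{2u}| = k^{-2+2\varepsilon}$, the zeta factor is of polynomial growth in $|\mathrm{Im}(u)|$, $H_1$ decays exponentially, and $1/u$ is bounded; hence the integral converges absolutely and is $O(k^{-2+\varepsilon})$. This yields $M_k = 2\zeta(2) + O(k^{-2+\varepsilon})$, as claimed.

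The only step that is more than bookkeeping is recognizing that the hypothesis $H(-1/4)=0$ is precisely what cancels the pole of $\zeta(2+4u)$ at $u=-1/4$; without that cancellation the contour shift could only reach $\mathrm{Re}(u) = -1/4+\varepsilon$ and would leave a secondary term of size $\asymp k^{-1/2}$, contradicting the stated error $O(k^{-2+\varepsilon})$. Everything else --- the Fubini interchange, the residue at $u=0$, and the tail estimate --- is routine.
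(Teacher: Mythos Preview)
Your proof is correct and follows essentially the same route as the paper: insert the Mellin--Barnes representation of $V_0$, exchange sum and integral to produce $\zeta(2+4u)$, shift the contour to $\mathrm{Re}(u)=-1+\varepsilon$, collect the residue at $u=0$, and observe that $H_1(-1/4)=0$ kills the would-be pole of $\zeta(2+4u)$ at $u=-1/4$. Your write-up is simply more explicit about the justifications (Fubini, why $H_1(0)=1$ and $H_1(-1/4)=0$, and the tail bound) than the paper's terse version.
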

\begin{proof}
Using \eqref{eq:Videf} and then changing the order of summation and integration, we have
\begin{eqnarray*}
 M_k =2\cdot \frac{1}{2\pi i} \int_{(3)} \zeta (2 + 4u) k^{2u} H_1(u)
      \frac{du}{u}.
\end{eqnarray*}
Recall that $H(0) = 1$ and $H(-1/4) = 0$ and hence $H_1(u)$ has the same properties (see \eqref{Stirling}).  Therefore, moving the line of integration to $(-1+\varepsilon)$ we pick up a simple pole at $u=0$ only
which delivers the stated main term.  The error term is obtained by trivially bounding the new contour integral.
\end{proof}

\begin{mylemma} We have
 $$ E_2 \ll K^{\varepsilon}, \quad and \quad E_3 \ll K^{ \varepsilon}.
$$
\end{mylemma}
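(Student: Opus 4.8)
\emph{Proof proposal.} The sums $E_2$ and $E_3$ have the same structure: an outer sum over auxiliary variables (the $a_2,b_2$ for $E_2$; the $d,a_1,b_1,b_2$ for $E_3$), then a sum over one Kloosterman modulus ($c_1$ for $E_2$, $c_2$ for $E_3$), and finally an inner sum over odd $k$ of $u(k)V_0(\cdots)J_{\nu(k)}(4\pi\sqrt{\cdots}/c)$, with $\nu(k)=k$ for $E_2$ and $\nu(k)=2k-1$ for $E_3$. The plan is to interchange the order of summation and evaluate the inner $k$-sum first, using Proposition~\ref{A} and Corollary~\ref{A1}. To this end one absorbs the slowly varying factor $u(k)V_0(\cdots)$ into a single weight function (call it $W$ for $E_2$, $\widetilde W$ for $E_3$); by Lemma~\ref{AFRD} this combined weight still obeys the derivative bounds \eqref{eq:wprop} (for $E_3$ after the harmless rescaling $K\mapsto 2K$, since $2k-1\asymp 2K$), and its decay like $(1+(\text{argument of }V_0))^{-A}$ effectively restricts the auxiliary variables to $a_2b_2^2\ll K^{1+\varepsilon}$ (resp.\ $d^3a_1^3b_1^2b_2^4\ll K^{2+\varepsilon}$, so $\sqrt{da_1b_1^2}\ll K^{1+\varepsilon}$), the complementary tails being negligible. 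The decisive point is that after the $k$-sum the Bessel argument $x=4\pi\sqrt{\cdots}/c$ is always $\ll K^{1+\varepsilon}\ll K^{2-\varepsilon}$, so Proposition~\ref{A}'s oscillatory term $g$ is negligible (it is $\ll_j (xK^{-2})^j$ for every $j$) and its remainder $O(xK^{-3})$ is tiny, with enough decay in $c$ built into $x$ to make the residual $c$-sum converge absolutely.

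For $E_2$ the Bessel index is $k$ itself. Writing $k=\kappa-1$ with $\kappa$ even, the factor $i^{-(k+1)}=i^{-\kappa}$ equals $+1$ for $\kappa\equiv 0\pmod 4$ and $-1$ for $\kappa\equiv 2\pmod 4$; applying Proposition~\ref{A} in these two residue classes and subtracting, the non-oscillatory leading term cancels completely (this is exactly how Corollary~\ref{A1} is obtained from Proposition~\ref{A}), leaving only a multiple of the oscillatory function $g_W$ plus $O(xK^{-3})$, with $x=4\pi a_2/c_1$. Since $a_2\ll K^{1+\varepsilon}$ forces $x\ll K^{1+\varepsilon}$ for every $c_1\ge 1$, both pieces are negligible: $g_W(x)\ll_j (xK^{-2})^j$, and the remainder is $\ll a_2c_1^{-1}K^{-3}$, which after inserting the Weil bound $|S(1,a_2^2;c_1)|\ll c_1^{1/2+\varepsilon}$ leaves a $c_1$-sum that converges absolutely and is bounded. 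Summing trivially over $a_2,b_2$ with $a_2b_2^2\ll K^{1+\varepsilon}$ then gives $E_2\ll K^{-2+\varepsilon}$, well within the claim.

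For $E_3$ the Bessel index is $2k-1$. Setting $\kappa=2k$, the $k$-sum runs over the single residue class $\kappa\equiv 2\pmod 4$, and the twist $i^{-2k}=-1$ is a harmless constant; Proposition~\ref{A} (case $a=2$) applies, but now the non-oscillatory leading term $\widetilde W(x)$ is not cancelled. However $\widetilde W$ is supported where $x\asymp K$, and combined with $x=4\pi\sqrt{da_1b_1^2}/c_2$ and $\sqrt{da_1b_1^2}\ll K^{1+\varepsilon}$ this forces $c_2\ll K^{\varepsilon}$, so the contribution of this term is
$$\ll \sum_{\substack{d,a_1,b_1,b_2\ge 1\\ d^3a_1^3b_1^2b_2^4\ll K^{2+\varepsilon}}}\frac{1}{d^{3/2}a_1^{3/2}b_1b_2^2}\sum_{1\le c_2\ll K^{\varepsilon}}\frac{|S(da_1b_1^2,1;c_2)|}{c_2},$$
which, using $|S(da_1b_1^2,1;c_2)|\ll c_2^{1/2+\varepsilon}$, the absolute convergence of $\sum d^{-3/2}$, $\sum a_1^{-3/2}$, $\sum b_2^{-2}$, and the harmless logarithm from $\sum_{b_1\ll K^{1+\varepsilon}}b_1^{-1}$, is $\ll K^{\varepsilon}$. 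The oscillatory $g$-term and the $O(xK^{-3})$ remainder are bounded exactly as for $E_2$ (here $x\ll K^{1+\varepsilon}$ uniformly in $c_2\ge 1$), contributing $\ll K^{-2+\varepsilon}$. Hence $E_3\ll K^{\varepsilon}$.

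The main obstacle, and the reason the $k$-sum has to be executed first, is that the modulus sums $\sum_c|S|/c$ diverge, so one cannot afford a trivial estimate of the Bessel sum for each fixed $c$. All the saving comes from the shape of Proposition~\ref{A} and Corollary~\ref{A1}: the complete cancellation of the non-oscillatory term for $E_2$ (a consequence of the $i^{-(k+1)}$ twist spreading evenly over the residues $\kappa\equiv 0,2\pmod 4$), and, for $E_3$, the concentration of the surviving non-oscillatory term in the short window $x\asymp K$, which together with the $V_0$-truncation both caps $c_2$ and keeps the remaining series absolutely convergent. The one routine-but-necessary check along the way is that $u(k)V_0(\cdots)$ genuinely inherits the hypotheses \eqref{eq:wprop} via Lemma~\ref{AFRD}.
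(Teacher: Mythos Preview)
Your argument is correct and matches the paper's approach closely. The one cosmetic difference is the order in which you obtain the truncation on the Kloosterman modulus: the paper first invokes the pointwise exponential decay \eqref{eq:Besseltrivial} of $J_{k-1}(x)$ for $x\le k/3$ (valid for each $k$ in the support of $u$) to force $c_2 d a_1 b_2^2\ll K^{\varepsilon}$ in $E_3$ and $c_1\ll K^{\varepsilon}$ in $E_2$, and only then appeals to Proposition~\ref{A} (resp.\ Corollary~\ref{A1}) to bound the residual $k$-sum by $O(1)$; you instead sum over $k$ first and read off the same truncation from the support of the leading term $\widetilde W(x)$ and the smallness of $g(x)$. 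Since the support condition $x\asymp K$ in Proposition~\ref{A} is precisely the aggregate manifestation of the individual Bessel decay, these two orderings are equivalent, and your slightly sharper bookkeeping even yields $E_2\ll K^{-2+\varepsilon}$ where the paper is content with $K^{\varepsilon}$.
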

\begin{proof}  Consider $E_3$. We have 
\begin{eqnarray*}
 E_3 = 4\pi \sum_{ \substack{ d, a_1 , b_1, b_2 \geq 1 \\  d^3 a_1^3 b_1^2 b_2^4 \ll K^{2+ \varepsilon}}} 
 \frac{\mu(d)}{d^{3/2}} \frac{1}{(a_1^3 b_1^2 b_2^4)^{1/2}} \sum_{c_2 \geq 1} \frac{ S(da_1 b_1^2, 1; c_2) }{c_2} F_3+ O(K^{-100}),  
\end{eqnarray*}
where
\begin{eqnarray*}
 F_3=  \sum_{k \text{ odd}} i^{-2k} u(k) V_0(d^3 a_1^3 b_1^2 b_2^4/k^2)J_{2k-1} \left( \frac{4\pi \sqrt{d a_1b_1^2} }{c_2} \right).
\end{eqnarray*}
The change of variables $2k-1 \to k-1$ gives
\begin{eqnarray*}
F_3 = - \sum_{k \equiv 2 \shortmod{4}} u \left( \frac{k}{2} \right) V_0(4 d^3 a_1^3 b_1^2 b_2^4/k^2)J_{k-1} \left( \frac{4\pi \sqrt{d a_1b_1^2} }{c_2} \right).
\end{eqnarray*}
We claim that $F_3 \ll 1$ and is very small $(O(K^{-100}))$ unless
\begin{eqnarray} \label{cond}
 c_2da_1b_2 \ll K^{\varepsilon}.
\end{eqnarray}
First observe that $V_0(4d^3 a_1^3 b_1^2 b_2^4/k^2)$ is very small unless $d^3 a_1^3 b_1^2 b_2^4 \ll K^{2+\varepsilon}$, which we henceforth assume.
By \cite[2.11 (5)]{Wa} 
\begin{equation}
\label{eq:Besseltrivial}
J_{k-1}(x) \ll 2^{-k}x, \quad  \text{if} \quad k \geq 2 \text{ and } 0< x \leq k/3.
\end{equation}
In our case, $$x= \frac{4\pi \sqrt{d^3a_1^3b_1^2b_2^4}}{c_2da_1b_2^2} \ll \frac{K^{1+ \varepsilon}}{c_2da_1b_2^2}.$$
This immediately means $F_3$ is very small unless $(\ref{cond})$ holds. 
Thus we have by Proposition \ref{A} that $F_3 \ll 1$ and so using \eqref{cond} we immediately obtain $E_3 \ll K^{\varepsilon}$.

The proof for $E_2$ is similar, so we omit the details. The point is that essentially $a_2 \ll K^{1+\varepsilon}$ so \eqref{eq:Besseltrivial} essentially gives $c_1 \ll K^{\varepsilon}$, while Corollary \ref{A1} shows the sum over $k$ is bounded.
\end{proof}

%
%

\begin{mylemma} We have
$ E_1 \ll K^{\frac{1}{2}+ \varepsilon}.
$
\end{mylemma}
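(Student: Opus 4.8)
The plan is to recognise the inner double sum over $a_1$ and $k$ inside $E_1$ as an instance of the sum $T$ from \eqref{SumR}, to evaluate the $k$-sum by Theorem~\ref{thm:Salphabeta} (packaged as Theorem~\ref{thm:Poisson}), and then to sum the resulting bounds over the remaining variables. Concretely, with $n=a_1$, $r_1=d b_1^2$, $r_2=a_2$ one has $S(a_1^2,a_2^2;c_1)=S(n^2,r_2^2;c_1)$, $S(d a_1 b_1^2,1;c_2)=S(r_1 n,1;c_2)$, and the Bessel arguments $4\pi\tfrac{a_1 a_2}{c_1}=4\pi\tfrac{n r_2}{c_1}$, $4\pi\tfrac{\sqrt{d a_1 b_1^2}}{c_2}=4\pi\tfrac{\sqrt{n r_1}}{c_2}$, so $E_1$ fits the template \eqref{SumR} exactly once the smooth weight $a_1^{-1/2}u(k)V_0(\cdots)$ is absorbed into the function $W(n,k)$.

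\emph{Truncation and side conditions.} By Lemma~\ref{AFRD} the factor $V_0(d^3 a_1 b_1^2 a_2^2 b_2^4/k^2)$ is negligible, of size $O(K^{-100})$, unless $d^3 a_1 b_1^2 a_2^2 b_2^4\ll K^{2+\varepsilon}$, which I henceforth assume; in particular $a_1 a_2\ll K^{2+\varepsilon}$ and $d a_1 b_1^2\ll K^{2+\varepsilon}$. Writing $i^{-3k-1}=-i\,i^{k}$, the inner sum over $k$ (for fixed $d,a_1,b_1,a_2,b_2,c_1,c_2$) is $-i\sum_{k\text{ odd}}i^{k}u(k)V_0(\cdots)J_k(4\pi\alpha)J_{2k-1}(4\pi\beta)$ with $\alpha=\tfrac{a_1 a_2}{c_1}$, $\beta=\tfrac{\sqrt{d a_1 b_1^2}}{c_2}$. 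By \eqref{eq:Besseltrivial} (equivalently the first assertion of Theorem~\ref{thm:Salphabeta}) this is $\ll\exp(-\tfrac12 K)$ unless $\alpha\gg K$ and $\beta\gg K$, which together with the truncation forces $c_1\ll K^{1+\varepsilon}$ and $c_2\ll K^{\varepsilon}$ --- exactly the hypotheses of Theorem~\ref{thm:Poisson} --- while also $\alpha\ll K^{2+\varepsilon}$ and $\beta\ll K^{1+\varepsilon}$, as required to apply Theorem~\ref{thm:Salphabeta}.

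\emph{Reduction to $T$ and summation.} Insert a smooth dyadic partition of unity $\psi(a_1/N)$ in $a_1$ (and, at a cost of $K^{\varepsilon}$, dyadically localise $d,b_1,a_2,b_2$). Put $W(n,k)=(N/n)^{1/2}\psi(n/N)u(k)V_0(d^3 n b_1^2 a_2^2 b_2^4/k^2)$; one checks from Lemma~\ref{AFRD} and \eqref{eq:wprop} that $W$ satisfies the hypotheses of the Poisson step, and $N r_1 r_2^2=N d b_1^2 a_2^2\ll K^{2+\varepsilon}$. The contribution of a fixed dyadic block to $E_1$ is then, up to $O(K^{-100})$,
\[
\ll \sum_{d,b_1,a_2,b_2}\frac{|\mu(d)|}{d^{3/2}}\frac{1}{(b_1^2 a_2^2 b_2^4)^{1/2}}\sum_{c_1,c_2}\frac{1}{c_1 c_2}\,N^{-1/2}\,|T|,
\]
where $T=T_+ + T_- + T_E$ is as in \eqref{SumR} with the data above, and Theorem~\ref{thm:Poisson} gives $T_E\ll K^{-1+\varepsilon}c_2 c_1^{1/2+\varepsilon}N$ and $T_{\pm}\ll \sqrt{N}\,c_2^{1/2+\varepsilon}c_1^{1+\varepsilon}\,\delta_{c_0=\square}+O(K^{-100})$ with $c_1=c_0 c'$, $(c_0,c')=1$, $c'\mid c_2^{\infty}$. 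For the $T_E$ term the surviving power of $N$ is $N^{1/2}$, and using $N\ll K^{2+\varepsilon}/(d^3 b_1^2 a_2^2 b_2^4)$ the $d,b_1,a_2,b_2$-sums converge with total $\ll K^{1+\varepsilon}$, the $c_2$-sum gives $\ll K^{\varepsilon}$, and $\sum_{c_1\ll K^{1+\varepsilon}}c_1^{\varepsilon-1/2}\ll K^{1/2+\varepsilon}$, so this contributes $\ll K^{1+\varepsilon}\cdot K^{-1+\varepsilon}\cdot K^{1/2+\varepsilon}=K^{1/2+\varepsilon}$. For the $T_{\pm}$ term $N^{-1/2}\sqrt N=1$, the $d,b_1,a_2,b_2$- and $c_2$-sums again give $\ll K^{\varepsilon}$, and the key point is that $c_0=\square$ leaves only $\ll K^{1/2+\varepsilon}$ admissible $c_1\ll K^{1+\varepsilon}$ (a perfect-square $c_0$ gives a saving of $K^{1/2}$, and $c'$ runs over only $\ll K^{\varepsilon}$ divisors of $c_2^{\infty}$ since $c_2\ll K^{\varepsilon}$), so this too is $\ll K^{1/2+\varepsilon}$; the $O(K^{-100})$ remainders are harmless after summing the $\ll K^{O(1)}$ tuples and $\ll K^{\varepsilon}$ dyadic blocks. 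Hence $E_1\ll K^{1/2+\varepsilon}$.

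The genuinely hard inputs --- the asymptotic for the weight sum of two Bessel functions (Theorem~\ref{thm:Salphabeta}) and the Poisson step in $n$ (Theorem~\ref{thm:Poisson}) --- are already available, so the main obstacle here is purely organisational: matching $E_1$ to \eqref{SumR} (tracking the harmless powers of $i$ and the $a_1^{-1/2}$ factor), verifying the side conditions $c_1\ll K^{1+\varepsilon}$, $c_2\ll K^{\varepsilon}$, $\alpha\ll K^{2+\varepsilon}$, $\beta\ll K^{1+\varepsilon}$, and carrying out the multi-variable summation so that the $c_1$-sum --- the source of the final exponent $1/2$ --- is handled correctly in both the $T_E$ and $T_{\pm}$ cases.
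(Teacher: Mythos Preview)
Your proof is correct and follows essentially the same route as the paper's own argument: identify $n=a_1$, $r_1=db_1^2$, $r_2=a_2$ to match \eqref{SumR}, use the Bessel decay to force $c_1\ll K^{1+\varepsilon}$ and $c_2\ll K^{\varepsilon}$, dyadically partition in $a_1$, and then invoke Theorem~\ref{thm:Poisson} to bound the $T_E$ and $T_{\pm}$ contributions separately, with the square condition on $c_0$ supplying the final $K^{1/2}$. The only cosmetic difference is that you also dyadically localize $d,b_1,a_2,b_2$, which the paper does not bother to do since those sums already converge absolutely.
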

\begin{proof}
 By Lemma \ref{AFRD},
\begin{multline*}
 E_1= 8 \pi^2 i^{-1} \sum_{\substack{d, a_1, b_1, a_2, b_2 \geq 1 \\ d^3a_1b_1^2a_2^2b_2^4 \ll K^{2+ \varepsilon}}}
      \frac{\mu(d)}{d^{3/2}} \frac{1}{(a_1 b_1^2a_2^2b_2^4)^{1/2}} \sum_{c_1 \geq 1} \frac{S(a_1^2, a_2^2; c_1)}{c_1}
\sum_{c_2 \geq 1} \frac{S(da_1b_1^2, 1; c_2)}{c_2} \\
\sum_{k \text{ odd}} i^{k} u \left(k \right) V_0(d^3a_1b_1^2a_2^2b_2^4/ k^2) J_{k} \left( \frac{4\pi a_1a_2}{c_1} \right) J_{2k-1} \left( \frac{4\pi \sqrt{d a_1b_1^2}}{c_2} \right) + O(K^{-100}).
\end{multline*}
For $c_1 \gg K^{1+2\varepsilon}$ or $c_2 \gg K^{\varepsilon}$, we have 
$$ \alpha = \frac{a_1a_2}{c_1} \ll \frac{K^{2+\varepsilon}}{K^{1+2\varepsilon}}= K^{1-\varepsilon}
$$
or
$$ \beta= \frac{\sqrt{da_1b_1^2}}{c_2} \ll \frac{K^{1+ \varepsilon/2}}{K^{\varepsilon}} \ll K^{1-\frac{\varepsilon}{2}}.
$$
By Theorem \ref{thm:Salphabeta}, the contribution to $E_1$ from $c_1 \gg K^{1+2\varepsilon}$ or $c_2 \gg K^{\varepsilon}$ is thus negligible.  Next we apply a smooth dyadic partition of unity to the sum over $a_1$, with say $1 = \sum_{N \text{ dyadic }} \Omega(a_1/N)$, where $\Omega$ is a fixed smooth function with support contained in $[1,2]$.  It then suffices to show $E_N^* \ll K^{1/2 + \varepsilon}$, for each $1 \ll N \ll K^{2 +\varepsilon}$, where
\begin{equation*}
 E^*_N := N^{-1/2} \sum_{\substack{d, b_1, a_2, b_2 \geq 1 \\ d^3b_1^2a_2^2b_2^4N \ll K^{2+ \varepsilon}}}
      \frac{\mu(d)}{d^{3/2}} \frac{1}{( b_1^2a_2^2b_2^4)^{1/2}} 
\sum_{c_1 \ll K^{1+ 2\varepsilon}} \frac{1}{c_1} 
 \sum_{c_2 \ll K^{\varepsilon}} \frac{1}{c_2} T^*,
\end{equation*}
where $T^*$ is
an instance of the expression $T$ defined by \eqref{SumR}, with $n=a_1$, $r_1 = db_1^2$, $r_2 = a_2$, and $W(n,k) = \Omega(n/N) (n/N)^{-1/2} V_0(n r_1 d^2 r_2^2 b_2^4/k^2) u(k)$.
By Theorem \ref{thm:Poisson}, we have $T=T_{\pm}+ T_E$.
The contribution to $E^*_N$ from $T_E$ is at most
\begin{multline*}
N^{-1/2}\sum_{\substack{d, b_1, a_2, b_2 \geq 1 \\ d^3b_1^2a_2^2b_2^4N \ll K^{2+ \varepsilon}}}
      \frac{1}{d^{3/2}} \frac{1}{( b_1^2a_2^2b_2^4)^{1/2}} 
\sum_{c_1 \ll K^{1+ 2\varepsilon}} \frac{1}{c_1} 
\sum_{c_2 \ll K^{\varepsilon}} \frac{1}{c_2} K^{-1+\varepsilon}c_2c_1^{1/2+\varepsilon}N 
\ll N^{1/2} K^{-1/2 + \varepsilon},
\\
\end{multline*}
which is satisfactory since $N \ll K^{2+\varepsilon}$.
By Theorem \ref{thm:Poisson}, the contribution from $T_{\pm}$ to $E^*_{N}$ is at most $O(K^{-50})$ plus 
\begin{multline*}
\ll N^{-1/2}\sum_{\substack{d^3b_1^2a_2^2b_2^4N \ll K^{2+ \varepsilon}}}
    \frac{1}{( d^3 b_1^2a_2^2b_2^4)^{1/2}} 
\sum_{c_2 \ll K^{\varepsilon}} \frac{1}{c_2} 
\sum_{\substack{ (c_0,c')=1 \\ c_0 c' \ll K^{1+\varepsilon} \\ c'|c_2^{\infty} }} 
\frac{1}{c_0 c'} N^{1/2} c_2^{1/2+ \varepsilon} (c_0 c' )^{1+\varepsilon}\delta_{c_0=\square},
\end{multline*} 
which is then seen to be $O(K^{1/2 + \varepsilon})$ by trivial estimations.
\end{proof}

\section{Proof of Theorem \ref{thm:averageweight}}
\label{section:extraweights}
Recall in Theorem \ref{thm:N*} we showed an asymptotic for the mean value of $N^*(F_f)$ with a power saving, where $N^*(F_f)$ is the same as $N(F_f)$ but with slightly different weights.  Precisely, we need to include $L(1, \text{sym}^2 g)/L(3/2, f)$ to get $N(F_f)$.  

We argue that $L(1, \text{sym}^2 g)$ can almost always be approximated by a short sum.  Precisely, write
\begin{equation}
\label{eq:L1short}
L(1, \text{sym}^2 g) = \sum_{q, r} \frac{\lambda_g(r^2)}{q^2 r} \exp(-q^2 r/V) + \mathcal{E}(g, V),
\end{equation}
Let $\delta, \varepsilon_1 > 0$ be chosen (small), and set $V = K^{\delta}$.  Then there exists $\varepsilon_2 > 0$ such that the number of pairs $(g, k)$ such that $g \in B_{k+1}$, and $K \leq k \leq 2K$ such that $|\mathcal{E}(g, V)| \geq K^{-\varepsilon_2}$ is $O(K^{\varepsilon_1})$.  Furthermore, by trivial estimations we have $|\mathcal{E}(g, V)| \ll (\log{K})^{100}$, say.  This result is a consequence of a zero-density estimate for this family which is a very minor modification of results of \cite{Luo} who considered the harder case of Maass forms on the full modular group.

Now modifying \eqref{eq:N2*}, we have
\begin{equation}
 \label{eq:N}
\mathcal{N} := \sum_{k \text{ odd}} w(k) \sum_{f \in B_{2k}} N(F_f) = K c'' \mathcal{M},
\end{equation}
where
\begin{equation}
\mathcal{M} := \sum_{k \text{ odd}} u(k) \sum_{f \in B_{2k}} \sum_{g \in B_{k+1}} \omega_f^{-1} \omega_g^{-1} \frac{L(1, \text{sym}^2 g)}{L(3/2, f)} L(\tfrac12, \mathrm{sym}^2 g \otimes f).
 \end{equation}
Next we apply the approximation \eqref{eq:L1short}.  The error term to $\mathcal{M}$ in this approximation is
\begin{equation}
\label{eq:L1approx}
\ll \sum_{k \text{ odd}} u(k) \sum_{f \in B_{2k}} \sum_{g \in B_{k+1}} \omega_f^{-1} \omega_g^{-1} |\mathcal{E}(g, V)|  L(\tfrac12, \mathrm{sym}^2 g \otimes f),
\end{equation}
by positivity of the central values and the trivial bound $L(3/2, f)^{-1} \ll 1$.  The contribution to \eqref{eq:L1approx} from ``good" $g, k$, i.e. those such that $|\mathcal{E}(g, V)| \leq K^{-\varepsilon_2}$ is, by Theorem \ref{thm:N*}, 
\begin{equation}
\ll K^{-\varepsilon_2}   
\sum_{k \text{ odd}} u(k) \sum_{f \in B_{2k}} \sum_{g \in B_{k+1}} \omega_f^{-1} \omega_g^{-1}  L(\tfrac12, \mathrm{sym}^2 g \otimes f) = K^{-\varepsilon_2}  \mathcal{M}^*
\ll K^{1-\varepsilon_2}.
\end{equation}
The contribution to \eqref{eq:L1approx} from ``bad" $g, k$ is, using the convexity bound $L(1/2, \text{sym}^2 g \otimes f) \ll K$, 
\begin{equation}
\ll K^{\varepsilon_1} (\log{K})^{100} K^{-2+\varepsilon} K^2  \ll K^{2 \varepsilon_1}.
\end{equation}
keeping in mind the Petersson weights satisfy $\omega_{f}^{-1}, \omega_g^{-1} \ll K^{-1+\varepsilon}$ (see \eqref{eq:omegadef}).  Thus for purposes of obtaining the asymptotic formula for $\mathcal{N}$, it suffices to replace $L(1, \text{sym}^2 g)$ by the sum on the right hand side of \eqref{eq:L1short}.

Next we observe
\begin{equation}
\frac{1}{L(s,f)} = \prod_p(1 - \frac{\lambda_f(p)}{p^s} + \frac{1}{p^{2s}}) = \sum_{a, b} \frac{\mu^2(ab) \mu(a) \lambda_f(a)}{(ab^2)^{s}} =: \sum_{t=1}^{\infty} \frac{\mu_f(t)}{t^s}.
\end{equation}
A trivial bound gives
\begin{equation}
\label{eq:1/L}
\frac{1}{L(3/2, f)} = \sum_{t=1}^{\infty} \frac{\mu_f(t)}{t^{3/2}} \exp(-t/V) + O(V^{-\half + \varepsilon}).
\end{equation}

By combining \eqref{eq:L1short} and \eqref{eq:1/L}, we then find a satisfactory approximation to 
$\mathcal{N}$ which leads to a minor modification of the sums studied in Section \ref{section:averagenorm}.  As before, we apply an approximate functional equation and apply the Petersson trace formula over $f$ and $g$.  The error terms are practically of the same shape as before (since $q, r, t$ are small) and so we claim that the asymptotic for $\mathcal{N}$ comes from the diagonal only, which we denote $\mathcal{N}_0$; we also let $\mathcal{M}_0$ be the corresponding diagonal term from $\mathcal{M}$.  We shall presently show
\begin{equation}
\label{eq:M0asymp}
\mathcal{M}_0 \sim 
2 \frac{\zeta(2)^3}{\zeta(4)}. 
\end{equation}
\begin{proof}[Proof of \eqref{eq:M0asymp}]

We have
\begin{equation}
\mathcal{M}_0 = 2 \sum_{d, a_1, b_1, a_2, b_2, q, r, a, b} \frac{\mu(d)\mu^2(ab) \mu(a) }{(d^3 a_1 b_1^2 a_2^2 b_2^4 q^4 r^2 a^3 b^6)^{1/2}} \delta_{d a_1 b_1^2 = a} \sum_{c | (a_1^2, a_2^2)} \delta_{r^2 = \frac{a_1^2 a_2^2}{c^2}} (\dots),
\end{equation}
where $\delta_{m=n}$ is $1$ if $m=n$ and is $0$ otherwise, and the $(\dots)$ indicates only the weight functions.  In the course of the computations it will be apparent that the sum above converges absolutely.  The Perron formula method will then show that $\mathcal{M}_0$ is asymptotic to the above sum with the weight functions removed (we use the fact that the Mellin transforms of the weight functions all have residue $1$ at the origin).  Now we observe that $b_2$ and $q$ are free and each leads to a $\zeta(2)$, and that $b_1 = 1$ since $a$ is squarefree.  Furthermore, we solve $r = a_1 a_2/c$ and $a = d a_1$ so that
\begin{equation}
\mathcal{M}_0 \sim 2 \zeta(2)^2 \sum_{d, a_1,  a_2,  b} \sum_{c | (a_1^2, a_2^2)} c \frac{\mu(d)\mu^2(d a_1 b) \mu(d a_1) }{(d^6 a_1^6 a_2^4 b^6)^{1/2}}.
\end{equation}
Taking the Euler product of the above, we have
\begin{equation}
\mathcal{M}_0 \sim 2 \zeta(2)^2 \prod_{p} \Big(\sum_{\substack{0 \leq d, a_1,  a_2,  b \\ 0 \leq d + a_1 +b \leq 1}}  \sum_{0 \leq c \leq \min(2 a_1, 2a_2)} p^c \frac{(-1)^{a_1} }{p^{3d + 3a_1 + 2a_2 + 3b}} \Big).
\end{equation}
Note that $0 \leq c \leq 2$.  The contribution to the inner sums above from $c=0$ is
\begin{equation}
\label{eq:c0}
\sum_{\substack{0 \leq d, a_1,  a_2,  b \\ 0 \leq d + a_1 +b \leq 1}} \frac{(-1)^{a_1} }{p^{3d + 3a_1 + 2a_2 + 3b}} = (1 - p^{-2})^{-1} (1 + p^{-3} - p^{-3} + p^{-3})
\end{equation}
If $c =1$ or $c=2$ then $a_1 =1$, $d=b=0$, and the condition on $a_2$ is $a_2 \geq 1$.  Thus these terms contribute
\begin{equation}
\label{eq:c12}
- p^{-3} \sum_{c=1}^{2} p^c \sum_{a_2 \geq 1} \frac{1}{p^{2a_2}} = -p^{-5} (1-p^{-2})^{-1} (p + p^2).
\end{equation}
Note that $\eqref{eq:c0}$ plus $\eqref{eq:c12}$ is $(1-p^{-2})^{-1} (1-p^{-4})$. Thus \eqref{eq:M0asymp} holds, as desired.
\end{proof}

Now applying \eqref{eq:M0asymp} to $\mathcal{N}$, we have 
\begin{equation}
\mathcal{N} \sim 24 c'' \frac{\zeta(2)^3}{\zeta(4)} \sum_{k \text{ odd}} w(k) \dim(S_{2k}).
\end{equation}
Finally we finish the proof of Theorem \ref{thm:averageweight} by computing
\begin{equation}
24 c'' \frac{\zeta(2)^3}{\zeta(4)} = 2^3 3 \frac{2 3}{\pi^3} \frac{\pi^3}{270} \frac{3^2}{\pi^2} \frac{\pi^6}{2^3 3^3} \frac{90}{\pi^4} = 2.
\end{equation}

\section{Proof of Theorem \ref{thm:Salphabeta}}
\label{section:Ksum}
As a first step, we remark that the exponential decay of $S(\alpha, \beta)$ for $\alpha$ or $\beta$ $\leq K/100$ follows from the rapid decay of the Bessel functions, that is \eqref{eq:Besseltrivial}.

Our next step is to find a compact integral representation for $S(\alpha, \beta)$ that immediately shows $S(\alpha, \beta) \ll 1$, which is already a sizable savings compared to say using the  bound $|J_{k}(x)| \ll k^{-1/3}$.
\begin{mylemma}
Recall the definition of $S(\alpha, \beta)$ given by \eqref{eq:Sdef}. 
 We have
\begin{equation}
\label{eq:SIntegralForm}
 S(\alpha, \beta) = \half \sum_{\pm} \pm  \int_{-1/2}^{1/2} e(-2 \beta \sin(2 \pi u)) e(-u) I_{K,\pm \alpha}(u) du,
\end{equation}
where
\begin{equation}
 I_{K,\pm \alpha}(u) = \intR e(\pm 2 \alpha \cos(2 \pi (2u-t))) \widehat{w}(-t) dt.
\end{equation}
\end{mylemma}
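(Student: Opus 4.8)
The plan is to collapse $S(\alpha,\beta)$ to a single oscillatory integral by using the classical integral representation of the $J$-Bessel function to remove the index $2k-1$ from $J_{2k-1}(4\pi\beta)$, and then to identify the remaining sum over $k$ with a Fourier series that the Jacobi--Anger expansion inverts explicitly.

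First I would write, for integer $m$, $J_m(x)=\frac{1}{2\pi}\int_{-\pi}^{\pi}e^{i(m\tau-x\sin\tau)}\,d\tau$, and substitute $\tau=2\pi u$, $m=2k-1$, $x=4\pi\beta$ to obtain
\begin{equation*}
J_{2k-1}(4\pi\beta)=\int_{-1/2}^{1/2}e\big((2k-1)u\big)\,e(-2\beta\sin(2\pi u))\,du .
\end{equation*}
Inserting this into the definition \eqref{eq:Sdef}, interchanging the sum over $k$ with the $u$-integral (trivially legitimate since $w$ has compact support, so the sum is finite), and writing $e((2k-1)u)=e(2ku)e(-u)$, I arrive at
\begin{equation*}
S(\alpha,\beta)=\int_{-1/2}^{1/2}e(-2\beta\sin(2\pi u))\,e(-u)\,\Sigma(u)\,du,
\qquad
\Sigma(u):=\sum_{k\ \mathrm{odd}}i^kJ_k(4\pi\alpha)\,w(k)\,e(2ku).
\end{equation*}
Comparing with \eqref{eq:SIntegralForm}, the task reduces to the pointwise identity $\Sigma(u)=\half\big(I_{K,\alpha}(u)-I_{K,-\alpha}(u)\big)$.

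To prove that identity I would argue backwards from $I_{K,\pm\alpha}$. The Jacobi--Anger expansion $e^{iz\cos\theta}=\sum_{n\in\mz}i^nJ_n(z)e^{in\theta}$, which converges uniformly in $\theta$ for fixed $z$, together with $J_{-n}(z)=(-1)^nJ_n(z)$, gives $\half\big(e^{iz\cos\theta}-e^{-iz\cos\theta}\big)=\sum_{n\ \mathrm{odd}}i^nJ_n(z)e^{in\theta}$. Taking $z=4\pi\alpha$ and $\theta=2\pi(2u-t)$, the integrand of $\half(I_{K,\alpha}(u)-I_{K,-\alpha}(u))$ becomes $\widehat{w}(-t)\sum_{n\ \mathrm{odd}}i^nJ_n(4\pi\alpha)\,e(2nu)\,e(-nt)$. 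Since $\sum_n|J_n(4\pi\alpha)|$ converges (by the standard bound $|J_n(x)|\le(|x|/2)^{|n|}/|n|!$, i.e.\ super-exponential decay in $|n|$ for fixed argument), uniformly in $t$ and $u$, and $\widehat{w}\in L^1$, I may interchange the $n$-sum with the $t$-integral; the $t$-integral of each term is $\intR e(-nt)\widehat{w}(-t)\,dt=w(n)$ by Fourier inversion (normalizing $\widehat{w}$ so that $w(x)=\intR\widehat{w}(\xi)e(x\xi)\,d\xi$). As $w$ vanishes for $n\le 0$, summing over all odd integers $n$ is the same as summing over positive odd $n$, which is precisely $\Sigma(u)$; this yields the claimed identity and hence \eqref{eq:SIntegralForm}.

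I do not expect a genuine obstacle here: the argument is essentially bookkeeping in Fourier analysis. The two points needing attention are keeping the normalization of $\widehat{w}$ consistent with the Fourier inversion formula, and justifying the single interchange of summation and integration, for which uniform-in-$t$ absolute convergence of the Bessel series together with $\widehat{w}\in L^1$ is enough; everything else is routine.
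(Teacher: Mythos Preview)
Your argument is correct. It differs from the paper's in organization: the paper opens \emph{both} Bessel functions with the integral representation $J_l(x)=\int_{-1/2}^{1/2}e(lt)e^{-ix\sin(2\pi t)}\,dt$, obtaining a double integral against the exponential sum $T_K(t,u)=\sum_{k\ \mathrm{odd}}i^k e(k(t+2u))w(k)$, then applies Poisson summation in $k$, splits the dual sum over $\nu$ into even and odd parts, and performs a change of variables in each to reach \eqref{eq:SIntegralForm}. You instead open only $J_{2k-1}(4\pi\beta)$, leave $J_k(4\pi\alpha)$ intact, and recognize the resulting $k$-sum $\Sigma(u)$ as the odd part of a Jacobi--Anger expansion, recovering $w(n)$ by Fourier inversion in $t$. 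The two routes are essentially dual---the paper unfolds $J_k$ as a Fourier coefficient and Poisson-sums, while you unfold the exponential via Jacobi--Anger and invert---but your path is shorter and avoids the case split on the parity of $\nu$ and the attendant shifts. The paper's route, on the other hand, keeps the two Bessel factors on an equal footing, which may feel more systematic if one did not spot the Jacobi--Anger shortcut.
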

The nice feature of \eqref{eq:SIntegralForm} is that it effectively involves only one integral, since we shall find an asymptotic formula for $I_{K, \pm \alpha}(u)$ in Lemma \ref{lemma:IK} below.

\begin{proof}
We begin by applying the integral representation
\begin{equation}
 J_{l}(x) = \int_{-\half}^{\half} e(lt) e^{-ix \sin(2 \pi t)} dt,
\end{equation}
to both Bessel functions and reversing the orders of summation and integration, giving
\begin{equation}
 S(\alpha, \beta) = \int_{-1/2}^{1/2} \int_{-1/2}^{1/2} e(-2\alpha \sin(2 \pi t) - 2 \beta \sin(2 \pi u)) e(-u) T_K(t,u) dt du,
\end{equation}
where
\begin{equation}
 T_K(t,u) = \sum_{k \text{ odd}} i^k e(k(t + 2u)) w(k).
\end{equation}
Next we apply the Poisson summation formula to the sum over $k$.  After some calculation, we find that for any $y \in \mr$,
\begin{equation}
 \sum_{k \text{ odd}} i^k w(k) e(ky) = i e(y) \sum_{\nu \in \mz} \widehat{w_{1}}(\nu - 2y -\half),
\end{equation}
where $w_{1}(x) = w(1+2x)$.
Thus, after interchanging the orders of summation and integration, we have
\begin{equation}
 S = i \sum_{\nu \in \mz} \int_{-1/2}^{1/2} \int_{-1/2}^{1/2} e(-2\alpha \sin(2 \pi t)) e(-2\beta \sin(2 \pi u)) e(t+u)  \widehat{w_{1}}(\nu - 2t-4u - \half) dt du.
\end{equation}
Let $S = S_+ + S_-$ where $S_+$ corresponds to even values of $\nu$, while $S_-$ corresponds to odd values of $\nu$.  In $S_+$ we change variables $t \rightarrow t + \frac{\nu}{2}$ while for $\nu$ odd we do $t \rightarrow t + \frac{\nu-1}{2}$, giving
\begin{equation}
 S_+ = i \intR \int_{-1/2}^{1/2} e(-2\alpha \sin(2 \pi t)) e(-2\beta \sin(2 \pi u)) e(t+u) \widehat{w_{1}}(- 2t-4u - \half) du dt
\end{equation}
and
\begin{equation}
 S_- = i \intR \int_{-1/2}^{1/2} e(-2\alpha \sin(2 \pi t)) e(-2\beta \sin(2 \pi u)) e(t+u) \widehat{w_{1}}(- 2t-4u + \half) du dt.
\end{equation}
Next we reverse the orders of integration and change variables $t \rightarrow t - 2u \mp 1/4$ (depending on if it is $S_+$ or $S_-$), giving (after some brief calculation)
\begin{equation}
 S_{\pm } = \pm \int_{-1/2}^{1/2} e(-2 \beta \sin(2 \pi u)) e(-u) \intR e(\pm 2 \alpha \cos(2 \pi (2u-t))) \widehat{w_{1}}(-2t) e(t) dt du.
\end{equation}
To complete the proof, notice that
 $e(t) \widehat{w_{1}}(-2t) = \half  \widehat{w}(-t)$.
\end{proof}

\begin{mylemma}
\label{lemma:IK}
 Suppose that $\alpha \ll K^{2+\varepsilon}$.  If $\alpha \gg K^{2-\varepsilon}$, we have that $I_{K, \pm \alpha}(u) \ll K^{-1+\varepsilon}$ except possibly if $|\sin(4 \pi u)| \ll K^{-1+\varepsilon}$.  If $\alpha \ll K^{2-\varepsilon}$, then there exists $J$ depending on $\varepsilon$ only and absolute constants $a_j$ such that
\begin{equation}
 I_{K, \pm \alpha}^{T}(u) = e(\pm 2 \alpha \cos(4 \pi u)) \sum_{j \leq J} a_j (\alpha \cos(4 \pi u))^j w^{(2j)}(\mp 4 \pi \alpha  \sin(4 \pi u) ) + O(K^{-1 + \varepsilon}).
\end{equation}
\end{mylemma}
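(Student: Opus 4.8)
The plan is to exploit the sharp localization of the amplitude $\widehat{w}(-t)$. Writing $\widehat{w}(-t)=\int w(x)e(xt)\,dx$ and integrating by parts $A$ times, the hypotheses \eqref{eq:wprop} give $\widehat{w}(-t)\ll_A K(1+K|t|)^{-A}$, so up to an error $O(K^{-100})$ one may restrict the $t$-integral defining $I_{K,\pm\alpha}(u)$ to $|t|\le K^{-1+\varepsilon}$; on this window $\widehat{w}(-t)$ varies at scale $K^{-1}$. The same integration by parts yields the moment bounds $\int_{\mr}|t|^m|\widehat{w}(-t)|\,dt\ll_m K^{-m}$, which will be the workhorse in the main case. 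It is also convenient to record that the phase $\phi(t)=\pm 2\alpha\cos(2\pi(2u-t))$ has $\phi'(t)=\pm 4\pi\alpha\sin(2\pi(2u-t))$, whose zeros are the $t$ with $2u-t\in\half\mz$; the nearest such zero to $t=0$ lies in the window $|t|\le K^{-1+\varepsilon}$ exactly when $|\sin(4\pi u)|\ll K^{-1+\varepsilon}$.

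For $\alpha\gg K^{2-\varepsilon}$ I would argue by nonstationary phase. If $|\sin(4\pi u)|\gg K^{-1+\varepsilon}$, then on the effective support $\sin(2\pi(2u-t))$ stays within $O(K^{-1+\varepsilon})$ of $\sin(4\pi u)$, so $\phi$ has no critical point there and $|\phi'(t)|\gg\alpha|\sin(4\pi u)|\gg\alpha K^{-1+\varepsilon}$; hence the product of this lower bound with the support length $\asymp K^{-1}$ exceeds a fixed positive power of $K$ (using $\alpha\gg K^{2-\varepsilon}$), and integrating by parts $N$ times — the bound $\phi''(t)\ll\alpha$ not spoiling the gain — gives $I_{K,\pm\alpha}(u)\ll_N K^{-N}\ll K^{-1+\varepsilon}$. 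The excluded range $|\sin(4\pi u)|\ll K^{-1+\varepsilon}$ is precisely where a critical point of the phase enters the window and $I_{K,\pm\alpha}(u)$ can have size $\asymp 1$; that stationary-phase contribution is isolated and evaluated separately in the proof of Theorem \ref{thm:Salphabeta}.

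For $\alpha\ll K^{2-\varepsilon}$ I would Taylor-expand the phase about $t=0$: $\pm 2\alpha\cos(2\pi(2u-t))=P_0+P_1t+P_2t^2+\rho(t)$, with $P_0=\pm 2\alpha\cos(4\pi u)$, $P_1=\pm 4\pi\alpha\sin(4\pi u)$, $P_2=\mp 4\pi^2\alpha\cos(4\pi u)$, and $\rho(t)\ll\alpha|t|^3$. On $|t|\le K^{-1+\varepsilon}$ one has $\rho(t)\ll\alpha K^{-3+3\varepsilon}\ll K^{-1+\varepsilon}$, so the exponential of the phase equals $e(P_0+P_1t+P_2t^2)$ up to a factor $1+O(K^{-1+\varepsilon})$ and $\rho$ may be discarded; re-extending the integral to all of $\mr$ at negligible cost leaves
\[
 I^T_{K,\pm\alpha}(u)=e(P_0)\int_{\mr}\widehat{w}(-t)\,e(P_1t)\,e(P_2t^2)\,dt.
\]
Now expand $e(P_2t^2)$ in its Taylor series. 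Using the uniform remainder bound $|e(x)-\sum_{j\le J}(2\pi i x)^j/j!|\le(2\pi|x|)^{J+1}/(J+1)!$ for real $x$ with $x=P_2t^2$, together with the moment bound above, the tail $j>J$ contributes $\ll_J|P_2|^{J+1}K^{-2J-2}\ll(\alpha/K^2)^{J+1}\ll K^{-\varepsilon(J+1)}$, which is $\ll K^{-1}$ once $J=\lceil 1/\varepsilon\rceil$ (depending only on $\varepsilon$). For the surviving terms, differentiating Fourier inversion gives $\int_{\mr}t^{2j}e(P_1t)\widehat{w}(-t)\,dt=(-4\pi^2)^{-j}w^{(2j)}(-P_1)$; collecting the combinatorial constants and inserting $-P_1=\mp 4\pi\alpha\sin(4\pi u)$ and $P_2=\mp 4\pi^2\alpha\cos(4\pi u)$ produces exactly the asserted expansion with absolute constants $a_j$. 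Only even-order derivatives of $w$ appear because, the cubic-and-higher part of the phase having been discarded, the sole source of powers of $t$ beyond the linear one is $e(P_2t^2)$, which contributes only $t^{2j}$.

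The crux — and the reason the conclusion is stated with a truncated sum rather than a closed form — is that the quadratic coefficient $P_2$ makes $P_2t^2$ of size $\asymp K^{\varepsilon}$ near the edge $|t|\asymp K^{-1+\varepsilon}$ of the support, so $e(P_2t^2)$ cannot be replaced by a finite Taylor polynomial as a pointwise approximation. The fix is to expand before integrating: each monomial $t^{2j}$ is then converted by Fourier inversion into $w^{(2j)}$, whose $K^{-2j}$ decay — combined with $\alpha\ll K^{2-\varepsilon}$ — restores convergence of the truncated series. A secondary difficulty is the marginal bookkeeping in the first case near $\alpha\asymp K^{2-\varepsilon}$ and $|\sin(4\pi u)|\asymp K^{-1+\varepsilon}$, where the nonstationary-phase gain per integration by parts is only a small power of $K$ and must be extracted with care.
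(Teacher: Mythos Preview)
Your proof is correct and follows essentially the same approach as the paper: truncate to $|t|\le K^{-1+\varepsilon}$ via the decay of $\widehat{w}$, Taylor-expand the phase about $t=0$, dispose of the large-$\alpha$ regime by integration by parts, and in the small-$\alpha$ regime expand $e(P_2 t^2)$ in Taylor series and use Fourier inversion to produce the derivatives $w^{(2j)}$. The only cosmetic differences are that the paper absorbs $e(P_2 t^2)$ into the amplitude before integrating by parts in the first case, and in the second case Taylor-expands on the truncated window (using the pointwise bound $|\alpha t^2|\ll K^{-\varepsilon}$) before re-extending to $\mr$, rather than extending first and invoking moment bounds as you do.
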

Remark.  In our desired application we only require $\alpha \ll K^{2+\varepsilon}$ but with extra work the analysis of $S(\alpha, \beta)$ can be extended to larger values of $\alpha$.
\begin{proof}
Recall that $w$ is assumed to satisfy \eqref{eq:wprop}, and so by integration by parts we have for each $j=0,1,2, \dots$, and any $A > 0$,
\begin{equation}
\label{eq:whatprop}
\frac{d^j}{dt^j} \widehat{w}(t) \ll_{j,A} K^{1+j} (1 + |t| K)^{-A}.
\end{equation}
 By \eqref{eq:whatprop}, the contribution to $I_{K, \pm \alpha}(u)$ from $|t| \geq K^{-1+\varepsilon}$ is very small.  That is, if we define $I_{K,\alpha}^{T}(u)$ to be the truncated integral with $|t| \leq K^{-1+\varepsilon}$, then $I_{K,\alpha}(u) = I_{K,\alpha}^{T}(u) + O(K^{-100})$.

Write $\cos(x-y) = \cos(x) \cos(y) + \sin(x) \sin(y)$, and take a Taylor expansion so that
\begin{equation}
 I_{K, \pm \alpha}^{T}(u) = \int_{-K^{-1+\varepsilon}}^{K^{-1+\varepsilon}} e \{ \pm 2 \alpha \cos(4 \pi u) [ 1- 2 \pi^2 t^2 + O(t^4)] \pm 2 \alpha \sin(4 \pi u) [2 \pi t + O(t^3)] \} \widehat{w}(-t) dt.
\end{equation}
With shorthand $c = \cos(4 \pi u)$, $s = \sin(4 \pi u)$, and by another Taylor expansion using $\alpha K^{-3} \ll K^{-1+\varepsilon}$, we have
\begin{equation}
 I_{K, \pm \alpha}^{T}(u) = e(\pm 2 \alpha c) \int_{|t| \leq K^{-1+\varepsilon}} e(\pm 4 \pi \alpha ts \mp 4 \pi^2 \alpha t^2  c) \widehat{w}(-t)dt + O(K^{-1 + \varepsilon}).
\end{equation}
If $|\alpha s K^{-1}| \gg K^{2\varepsilon}$ then one can repeatedly integrate by parts to show that this integral is $O(K^{-100})$; it is easiest to combine $e(\mp 4 \pi^2 \alpha t^2 c)$ with $\widehat{w}(-t)$ in this procedure. If $\alpha \gg K^{2-\varepsilon}$ and $|\alpha s K^{-1}| \ll K^{2\varepsilon}$ then $|\sin(4 \pi u)| \ll K^{-1+\varepsilon}$, proving the first statement of the lemma.

Now suppose that $\alpha \ll K^{2-3\varepsilon}$, so $|\alpha t^2| \ll K^{-\varepsilon}$.  In this case, we may expand the quadratic term into Taylor series, giving
\begin{equation}
 I_{K, \pm \alpha}^{T}(u) = e(\pm 2 \alpha c) \sum_{j \leq J} a_j (\alpha c)^j \int_{-K^{-1+\varepsilon}}^{K^{-1+\varepsilon}} (2 \pi i t)^{2j} e( \pm 4 \pi \alpha s t) \widehat{w}(-t) dt + O(K^{-\varepsilon J}) + O(K^{-1 + \varepsilon}).
\end{equation}
Here $a_j$ are certain absolute constants, and in particular $a_0 = 1$.
We pick $J = \lfloor 1/\varepsilon \rfloor + 1$.  Next we extend the range of integration back to $\mr$ without making a new error term.  Using
\begin{equation}
 \intR (2 \pi i t)^{2j} \widehat{w}(-t) e(-\lambda t) dt =  w^{(2j)}(\lambda),
\end{equation}
we have
\begin{equation}
 I_{K, \pm \alpha}^{T}(u) = e(\pm 2 \alpha \cos(4 \pi u)) \sum_{j \leq J} a_j (\alpha c)^j w^{(2j)}(\mp 4 \pi \alpha  s ) + O(K^{-1 + \varepsilon}). \qedhere
\end{equation}
\end{proof}

Now we quote some results given in Huxley's book [Hu].
\begin{mylemma}[[Hu{]} Lemma 5.1.2]
\label{lemma:HuxleyFirst} 
Let $f(x)$ be real and differentiable on the interval $A < x < B$ with $f'(x)$ monotone and $f'(x) \geq \kappa > 0$ on $(A,B)$.  Let $g(x)$ be real, and let $V$ be the total variation of $g$ on the closed interval $[A,B]$ plus the maximum modulus of $g(x)$ on $[A,B]$.  Then
\begin{equation}
 |\int_{A}^{B} g(x) e(f(x)) dx | \leq \frac{V}{\pi \kappa}.
\end{equation}
\end{mylemma}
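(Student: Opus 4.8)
The plan is to prove this by the classical first-derivative (van der Corput) test, with the weight $g$ absorbed via Stieltjes integration by parts. Set $F(x) = \int_A^x e(f(t))\,dt$ for $A \le x \le B$; the heart of the matter is the uniform bound $|F(x)| \le \frac{1}{\pi\kappa}$ for all $x$.

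First I would establish this bound on $F$. Since $f'$ is monotone and $f'(x) \ge \kappa > 0$ on $(A,B)$, the function $f$ is strictly increasing, so the substitution $u = f(t)$ is legitimate and gives $F(x) = \int_{f(A)}^{f(x)} e(u)\psi(u)\,du$ with $\psi(u) = 1/f'(f^{-1}(u))$. Because $f^{-1}$ is increasing and $f'$ is monotone, $\psi$ is monotone in $u$, and $0 < \psi(u) \le 1/\kappa$. Peeling off the value of $\psi$ at the appropriate endpoint and applying Bonnet's form of the second mean value theorem to the remaining nonnegative monotone factor, together with the elementary estimate $\left|\int_a^b e(u)\,du\right| \le \frac{1}{\pi}$ (which follows from $|e^{2\pi i b} - e^{2\pi i a}| \le 2$), yields $|F(x)| \le \frac{1}{\pi}\sup_u |\psi(u)| \le \frac{1}{\pi\kappa}$.

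Second I would reinstate $g$. Since $g$ has bounded variation and $F$ is continuous with $F' = e(f)$, we have $\int_A^B g(x) e(f(x))\,dx = \int_A^B g(x)\,dF(x)$, and Stieltjes integration by parts shows this equals $g(B)F(B) - \int_A^B F(x)\,dg(x)$, using $F(A) = 0$. Bounding the boundary term by $|g(B)|/(\pi\kappa)$ and the remaining integral by $(\pi\kappa)^{-1}$ times the total variation of $g$ on $[A,B]$, the sum is at most $V/(\pi\kappa)$ by the definition of $V$ as the total variation plus the maximum modulus of $g$.

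The hard part will be only bookkeeping rather than genuine difficulty: one must invoke the second mean value theorem in its sharp (Bonnet) form — writing $\psi$ as a constant plus a nonnegative monotone function — in order to obtain the constant $\frac{1}{\pi}$ rather than a wasteful $\frac{2}{\pi}$, and one must justify the Stieltjes integration by parts for a weight that is merely of bounded variation rather than differentiable. Both points are standard, and the remainder of the argument is routine.
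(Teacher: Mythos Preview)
The paper does not prove this lemma; it is quoted verbatim from Huxley's book as a standard tool. Your argument is the classical one---substitute $u=f(t)$ to reduce to a monotone weight against $e(u)$, bound the unweighted primitive by $1/\pi$, and then reinstate $g$ by Stieltjes integration by parts---and it is correct.

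One small point of care: Bonnet's second mean value theorem applies to real integrands, and applying it separately to $\cos(2\pi u)$ and $\sin(2\pi u)$ costs an extra factor (different intermediate points $\xi$). To get the sharp constant $1/(\pi\kappa)$ for $|F(x)|$ you should instead integrate by parts against the explicit antiderivative $G(u)=e(u)/(2\pi i)$: with $\psi$ monotone and $0<\psi\le 1/\kappa$,
\[
\Big|\int \psi\,dG\Big| \le |G|\big(\psi(\text{left})+\psi(\text{right})+|\psi(\text{right})-\psi(\text{left})|\big)
= \tfrac{1}{2\pi}\cdot 2\max\psi \le \tfrac{1}{\pi\kappa}.
\]
This is presumably what you meant by ``peeling off the endpoint value,'' and it matches the argument in Huxley (and Titchmarsh before him). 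The rest of your write-up is fine as stated.
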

\begin{mylemma}[[Hu{]} Lemma 5.5.6]
\label{lemma:HuxleyStationary}
 Let $f(x)$ be a real function, four times continuously differentiable for $A \leq x \leq B$, and let $g(x)$ be a real function, three times continuously differentiable for $A \leq x \leq B$.  Suppose that there are positive parameters $M, N, T, U$ with 
\begin{equation}
M \geq B-A, \qquad N\geq M T^{-1/2}
\end{equation}
and positive constants $C_r$ such that for $A \leq x \leq B$,
\begin{equation}
\Big|f^{(r)}(x)\Big| \leq C_r T M^{-r}, \qquad |g^{(s)}(x)| \leq C_s U N^{-s}
\end{equation}
for $r=2,3, 4$, and $s=0,1,2, 3$, and
\begin{equation}
 f''(x) \geq \frac{T}{C_2 M^2}.
\end{equation}
Suppose also that $f'(x)$ changes sign from negative to positive at a point $x=x_0$ with $A < x_0 < B$.  If $T$ is sufficiently large in terms of the constants $C_r$, then we have
\begin{multline}
 \int_{A}^{B} g(x) e(f(x)) dx = \frac{g(x_0) e(f(x_0) + \frac18)}{\sqrt{f''(x_0)}} + O(\frac{|g(B)|}{|f'(B)|} + \frac{|g(A)|}{|f'(A)|}) 
\\
+ O(\frac{M^4 U}{T^2}(1 + \frac{M}{N})^2 ( \frac{1}{(x_0-A)^3} + \frac{1}{(B-x_0)^3})) + O(\frac{MU}{T^{3/2}} (1 + \frac{M}{N})^2).
\end{multline}
\end{mylemma}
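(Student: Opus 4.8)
The plan is to prove this by the method of stationary phase, in the form that keeps track of all error terms, using as the only external input the first-derivative estimate of Lemma \ref{lemma:HuxleyFirst}. Write $\kappa_0 = f''(x_0)$, so that $\kappa_0 \asymp T M^{-2}$ by the hypotheses. The first step is to localize: fix a half-width $\delta$ with $M T^{-1/2} \ll \delta < \min(x_0 - A,\, B - x_0)$, to be taken in the end as close as possible to $\min(x_0-A, B-x_0)$, and split $[A,B]$ into the central interval $[x_0-\delta, x_0+\delta]$ and the two outer intervals. On each outer interval $f'$ is monotone (because $f'' > 0$) and bounded away from $0$ by $|f'(x_0\pm\delta)| = |\int_{x_0}^{x_0\pm\delta} f''| \gg T\delta M^{-2}$, while the total variation of $g$ plus $\max|g|$ is $\ll U(1+M/N)$. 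Hence, after an integration by parts extracting the endpoint contributions $g(B)e(f(B))/(2\pi i f'(B))$ and $g(A)e(f(A))/(2\pi i f'(A))$, Lemma \ref{lemma:HuxleyFirst} bounds the remaining outer integrals by $\ll U(1+M/N)\,M^2/(T\delta)$, together with boundary values at the artificial cuts $x_0\pm\delta$ which will be matched in the central analysis.

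The second and main step treats the central interval. Since $\phi(x) := f(x) - f(x_0)$ is nonnegative with a nondegenerate minimum at $x_0$, Hadamard's lemma (here is where $f \in C^4$ is used) writes $\phi(x) = (x-x_0)^2 \psi(x)$ with $\psi \in C^2$ and $\psi(x_0) = \tfrac12 \kappa_0 > 0$, so
\begin{equation*}
v = v(x) := \mathrm{sgn}(x-x_0)\,(x-x_0)\sqrt{2\psi(x)}
\end{equation*}
is a $C^2$ diffeomorphism of the central interval onto $[-\delta_1, \delta_1]$ with $\tfrac12 v^2 = \phi(x)$ and $\delta_1 \asymp \delta\sqrt{\kappa_0} \asymp \delta\sqrt T/M$, which is large since $\delta \gg M T^{-1/2}$ and $T$ is large. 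Then the central integral equals
\begin{equation*}
e(f(x_0)) \int_{-\delta_1}^{\delta_1} G(v)\, e\big(\tfrac12 v^2\big)\, dv, \qquad G(v) := g(x(v))\,\frac{dx}{dv}, \quad G(0) = \frac{g(x_0)}{\sqrt{\kappa_0}}.
\end{equation*}
I would then compare with the Fresnel integral $\int_{\mr} e(\tfrac12 v^2)\, dv = e(\tfrac18)$. Splitting $G(v) = G(0) + (G(v) - G(0))$, the constant part produces the main term $G(0) e(\tfrac18)$, which after restoring $e(f(x_0))$ is exactly the claimed $g(x_0) e(f(x_0) + \tfrac18)/\sqrt{\kappa_0}$, minus the tail $G(0)\int_{|v|>\delta_1} e(\tfrac12 v^2)\, dv$; by repeated integration by parts (the phase derivative being $v$) this tail splits into a boundary term of size $\ll |G(0)|/\delta_1$ at $v = \pm\delta_1$, which cancels the leftover cut terms from the outer intervals, plus a genuine error $\ll |G(0)|/\delta_1^3$. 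For the non-constant part I would write $G(v) - G(0) = v\widetilde G(v)$ and integrate by parts against $e(\tfrac12 v^2)\, dv = (2\pi i)^{-1}\,d\big(e(\tfrac12 v^2)\big)$, reducing it to a boundary term and $\int \widetilde G'(v)\, e(\tfrac12 v^2)\, dv$. The required bounds on $G$, $\widetilde G$ and their first two derivatives follow from the chain rule using $|g^{(s)}| \ll U N^{-s}$, $|f^{(r)}| \ll T M^{-r}$, $\kappa_0 \asymp T M^{-2}$, and $dx/dv \asymp M T^{-1/2}$ on the central interval; one then checks that, on letting $\delta \to \min(x_0-A, B-x_0)$ and adding the two one-sided contributions, the residual errors are precisely $O\big(M^4 U\, T^{-2}(1+M/N)^2((x_0-A)^{-3} + (B-x_0)^{-3})\big)$ and $O\big(M U\, T^{-3/2}(1+M/N)^2\big)$, while the endpoint boundary terms associated with $A$ and $B$ are $\ll |g(A)|/|f'(A)| + |g(B)|/|f'(B)|$. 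Collecting the main term and the errors from the three regions gives the stated formula.

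The main obstacle is not any single estimate but the bookkeeping of the error terms. One must choose $\delta$ so as to balance the outer estimate against the central tail term, verify that every boundary contribution at the artificial cuts $x_0\pm\delta$ telescopes away so that only the genuine endpoint terms $|g(A)|/|f'(A)|$ and $|g(B)|/|f'(B)|$ survive, and carefully track how the limited differentiability ($f\in C^4$, $g\in C^3$) propagates through the derivatives of the Jacobian of $v = v(x)$, so that each error lands in exactly the shape claimed --- in particular the factor $(1+M/N)^2$, which records the fact that the hypothesis $N \ge M T^{-1/2}$ is precisely what prevents the stationary-phase scale $M T^{-1/2}$ from exceeding the scale $N$ on which $g$ varies.
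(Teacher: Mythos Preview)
The paper does not prove this lemma; it is quoted verbatim from Huxley's book as \cite[Lemma~5.5.6]{Hu} and used as a black box in the proof of Theorem~\ref{thm:Salphabeta}. So there is no ``paper's own proof'' to compare against.

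Your sketch is the standard stationary phase argument and is essentially what Huxley does: localize near $x_0$, rectify the phase to a pure quadratic via $v^2/2 = f(x)-f(x_0)$, compare with the Fresnel integral, and control the tails with the first-derivative lemma. The outline is correct and identifies the right bottleneck, namely the bookkeeping of boundary terms at the artificial cuts and the propagation of the $C^4$/$C^3$ hypotheses through the Jacobian $dx/dv$. One small point: your claim that the boundary terms at $x_0\pm\delta$ ``telescope away'' is a bit optimistic as stated---in Huxley's treatment these terms do not literally cancel but are instead absorbed into the $O(M^4 U T^{-2}(1+M/N)^2\delta^{-3})$ error after choosing $\delta$ appropriately, which is why the final statement carries the $(x_0-A)^{-3}+(B-x_0)^{-3}$ factor rather than a cleaner expression. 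Otherwise your plan would, with care, reproduce the cited result.
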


\begin{proof}[Proof of Theorem \ref{thm:Salphabeta}]
 Applying Lemma \ref{lemma:IK} to $S(\alpha, \beta)$ given in the form of \eqref{eq:SIntegralForm}, we have
\begin{equation}
\label{eq:Sfg}
 S(\alpha, \beta) =  \sum_{\pm} \int_{-1/2}^{1/2} e(f(u)) g(u) du + O(K^{-1+\varepsilon}),
\end{equation}
where
\begin{equation}
 f(u) = \pm 2 \alpha \cos(4 \pi u) - 2 \beta \sin(2 \pi u), 
\end{equation}
and
\begin{equation}
\label{eq:gformula}
 g(u) = \pm  \half e(-u) \sum_{j \leq J} a_j (\alpha \cos(4 \pi u))^j  w^{(2j)}(\mp 4 \pi \alpha \sin(4 \pi u)).
\end{equation}
Technically, we need to assume $\alpha \ll K^{2-\varepsilon}$ to appeal to Lemma \ref{lemma:IK}, and we henceforth assume this since if $\alpha \gg K^{2-\varepsilon}$ then the main term in Theorem \ref{thm:Salphabeta} is absorbed by the error term. 

We find it convenient to fix the $\pm$ sign to be $-$; the $+$ case can be reduced to this form by conjugating the integral and changing variables $u \rightarrow -u$.  Observe that conjugating \eqref{eq:Sasymp} effectively swaps the two main terms.

We compute 
\begin{equation}
\label{eq:f'}
  f'(u) = 8 \pi  \alpha \sin(4 \pi u)- 4 \pi \beta \cos(2 \pi u)). 
\end{equation}
The support on $g$ means that $4 \pi \alpha \sin(4 \pi u) \geq K$ whence $f'(u) \geq 2K - 4 \pi \beta \cos(2 \pi u)$.  Since $\cos(2 \pi u) < 0$ outside of $[-1/4, 1/4]$, we have $f'(u) \geq 2K$ on the intervals $[-1/2, -1/4]$, and $[1/4, 1/2]$ and Lemma \ref{lemma:HuxleyFirst} shows that the contribution from this stretch is $O(K^{-1})$.  We can further restrict to the interval $[0, 1/4]$ since $g(u)$ vanishes on $[-1/4, 0]$.  Next we make a small simplification on $g$.  If $\alpha \ll K^{1+\varepsilon}$ then the terms with $j \geq 1$ in \eqref{eq:gformula} are $O(\alpha K^{-2}) = O(K^{-1+\varepsilon})$.  If $\alpha \gg K^{1+\varepsilon}$ then the support on $w$ means that $g(u) = 0$ unless $u \asymp K/\alpha$ and hence $\cos(4 \pi u) = 1 + O(\frac{K^2}{\alpha^2})$.  Gathering cases, we find that for $0 \leq u \leq \frac14$,
\begin{equation}
 g(u) = e(-u)  W(4 \pi \sin(4 \pi u)) + O(K^{-1+\varepsilon}),
\end{equation}
where $W$ is the following function which clearly satisfies \eqref{eq:wprop}:
\begin{equation}
 W(x) = - \half \sum_{j \leq J} a_j \alpha^j w^{(2j)}(x).
\end{equation}

Notice that $\sin(2 \pi u)$ is nonnegative and one-to-one on $[0, 1/4]$, and so is $\cos(2 \pi u)$. Now we change variables $v = \sin(2 \pi u)$.  Then $dv = 2 \pi \cos(2 \pi u) du = 2 \pi \sqrt{1-v^2} du$, $\sin(4 \pi u) = 2 \sin(2 \pi u) \cos(2 \pi u) = 2 v \sqrt{1-v^2}$, $\cos(4 \pi u) = 1 - 2 v^2$, and $e(-u) = \sqrt{1-v^2} - i v$.  

Then the contribution to \eqref{eq:Sfg} from the $-$ sign case is
\begin{equation}
\int_0^{1} e(-2 \alpha (1-2 v^2)  - 2 \beta v)   W(8 \pi \alpha v \sqrt{1-v^2}) \frac{-\sqrt{1-v^2} + iv}{4 \pi \sqrt{1-v^2}} dv + O(K^{-1+\varepsilon}).
\end{equation}
Let $h(x) = K x^{-1} W(x)$, which satisfies \eqref{eq:wprop}, so that the weight function above becomes
\begin{equation}
c_1 W(8 \pi \alpha v\sqrt{1-v^2}) + c_2 \frac{\alpha v}{K} v h(8 \pi \alpha v \sqrt{1-v^2}) =: r(v),
\end{equation}
where $c_1$ and $c_2$ are certain absolute constants.
Observe that $v \leq 1/2$ is in the support of $r$ only if $v \asymp K/\alpha$, and $v \geq 1/2$ is in the support of $r$ only if $1-v \asymp \frac{K^2}{\alpha^2}$.  Therefore if $\alpha \geq K^{1+\varepsilon}$ then $r$ is supported only on a short interval near $0$ of length $K/\alpha$, and on a short interval near $1$ of length $K^2/\alpha^2$.  As $\alpha$ approaches $K$ these intervals may merge into one longer interval.
Some careful thought shows that the derivatives of $r$ satisfy
\begin{equation}
r^{(j)}(v) \ll 
\begin{cases}
(\alpha/K)^{j}, \qquad &\text{if } v \leq 1/2, \\
(\alpha/K)^{j+1}, \qquad &\text{if }  v \geq 1/2,
\end{cases}
\end{equation}
and also that $\int_0^{1} |r'(v)| dv \ll \frac{\alpha}{K}$ (do not forget the support of $r$).
Now this integral takes the form
\begin{equation}
e(- 2 \alpha) \int_0^{1} e(f_1(v)) r(v) dv, \qquad f_1(v) = 4 \alpha v^2 - 2 \beta v.
\end{equation}
Recalling $\gamma = \frac{\beta}{4 \alpha}$, we have
\begin{equation}
f_1'(v) = 8\alpha ( v - \gamma), \qquad f_1''(v) = 8 \alpha.
\end{equation}
First we note that if $\gamma \geq 2$ then $f_1'(v) \leq -8 \alpha$ and so Lemma \ref{lemma:HuxleyFirst} gives that the integral is $O(K^{-1})$, consistent with Theorem \ref{thm:Salphabeta}.

Now suppose $\gamma < 2$.
Since $r(v)$ is identially zero near $1$, we can freely extend (for convenience) the integral to cover $[-1,3]$ with the convention that $r(v)$ is zero outside of $[1,2]$.  Now we can apply Lemma \ref{lemma:HuxleyStationary} to this integral.  Actually it is important to consider the cases $\gamma \leq 1/4$ and $\gamma > 1/4$ separately.  First suppose $\gamma > 1/4$ (which implies $\alpha \ll K^{1+\varepsilon}$ by the definition of $\gamma$ and the bound $\beta \ll K^{1+\varepsilon}$).  In this case, we have in
Huxley's notation, $T = \alpha$, $M =3$, $U=\alpha/K$, $N = K/\alpha$, and hence the error term in stationary phase is
\begin{equation}
\ll \frac{\alpha}{K \alpha^2} (1 + \frac{\alpha}{K})^2 + \frac{\alpha}{K \alpha^{3/2}} (1 + \frac{\alpha}{K})^2 \ll K^{-3/2 + \varepsilon}.
\end{equation}
Now suppose $\gamma \leq 1/4$.  On the interval $v \geq 1/2$ we have the bound $f_1'(v) \gg \alpha$ and so as in the previous paragraph the contribution from this part is $O(K^{-1})$.  On $v \leq 1/2$, we may apply Lemma \ref{lemma:HuxleyStationary} as before but now the difference is that $U = 1$ instead of $\alpha/K$.  Then the error term becomes
\begin{equation}
\ll \frac{1}{\alpha^2} (1 + \frac{\alpha}{K})^2 + \frac{1}{\alpha^{3/2}} ( 1 + \frac{\alpha}{K})^2 \ll \frac{\alpha^{1/2} }{ K^{2}},
\end{equation}
which is $O(K^{-1+\varepsilon})$ using $\alpha \ll K^{2+\varepsilon}$.

All that remains is to check that the main term in Lemma \ref{lemma:HuxleyStationary} is consistent with Theorem \ref{thm:Salphabeta}.  We have $f_1(\gamma) = -\frac{\beta^2}{4 \alpha}$, $f_1''(\gamma) = 8\alpha$, and hence this main term is
\begin{equation}
e(-2 \alpha - \frac{\beta^2}{4 \alpha} + \frac18) (8\alpha)^{-1/2} [c_1 W(2 \pi \beta  \sqrt{1-\gamma^2}) + c_2 \frac{\beta}{4K} \gamma h(2 \pi \beta  \sqrt{1-\gamma^2})],
\end{equation}
which is of the form desired for \eqref{eq:Sasymp}, with
\begin{equation}
\label{eq:H-formula}
H_{-}(x,y) = 8^{-1/2} e(\tfrac18) (c_1 W(2 \pi x) + c_2 \frac{\beta}{4K} y h(2 \pi x)). \qedhere
\end{equation}
\end{proof}

\section{Proof of Theorem \ref{thm:Poisson}}
\label{section:PoissonProof}
For the error term $T_E$, we have using the Weil bound for Kloosterman sum of modulus $c_1$ and the trivial bound for the Kloosterman sum of modulus $c_2$, that
\begin{equation} 
T_E \ll K^{-1+\varepsilon} c_2 c_1^{1/2+\varepsilon} N,
\end{equation}
as desired.
The main terms $T_{\pm}$ take the form
\begin{equation}
 T_{\pm} = e(\pm \frac{r_1 c_1}{4 c_2^2 r_2})\sum_n S(n^2, r_2^2;c) S(r_1 n, 1;d) \frac{H_{\pm}(\frac{\sqrt{n r_1}}{c_2}\sqrt{1-\frac{r_1 c_1^2}{16r_2^2 c_2^2 n}}, \frac{\sqrt{r_1}c_1}{4 r_2 c_2 \sqrt{n}})}{\sqrt{nr_2/c}} e(\pm \frac{2nr_2}{c_1}).
\end{equation}
This is an abuse of notation because in the development of $H_{\pm}$ we neglected the dependence of the weight function on $n$.  However, recalling the development \eqref{eq:H-formula} we obtained $H_{\pm}$ very explicitly in terms of $W(n,k)$ (with $n$ suppressed in the notation).  A little thought shows that it is sufficient to bound a sum of the form
\begin{equation}
T_{\pm}' = e(\pm \frac{r_1 c_1}{4 c_2^2 r_2})\sum_n S(n^2, r_2^2;c_1) S(r_1 n, 1;c_2) \frac{H_{\pm}(n, \frac{\sqrt{n r_1}}{c_2}\sqrt{1-\frac{r_1 c_1^2}{16r_2^2 c_2^2 n}}, \frac{\sqrt{r_1}c_1}{4 r_2 c_2 \sqrt{n}})}{\sqrt{nr_2/c_1}} e(\pm \frac{2nr_2}{c_1}),
\end{equation}
where $H_{\pm}(t, x, y)$ has support in $t \asymp N$, $x \asymp K$, and $y \leq 1$, and satisfies $H_{\pm}^{(i,j,k)}(t,x,y) \ll N^{-i} K^{-j}$ for any $i,j,k \in \{ 0, 1, 2, \dots \}$.  At this point we forsake any cancellation except in $n$ and therefore suppress the dependence of $H_{\pm}$ on other variables besides $n$.  That is, we write
\begin{equation} \label{eq:Tpmdef}
 T_{\pm}' = e(\pm \frac{r_1 c_1}{4 c_2^2 r_2}) \sqrt{\frac{c_1}{Nr_2}} \sum_n U(n) S(n^2, r_2^2;c_1) S(r_1 n, 1;c_2) e(\pm \frac{2nr_2}{c_1}),
\end{equation}
where $U(n)$ is a certain smooth function satisfying
\begin{equation}
\label{eq:Uderiv}
 U^{(j)}(x) \ll_{j, A} (\frac{K^{\varepsilon}}{N})^j (1 + \frac{x}{N})^{-A}.
\end{equation}
We get this using 
\begin{equation}
 \frac{r_1 N}{K^2 c_2^2} \ll K^{\varepsilon}, \qquad \frac{\sqrt{r_1} c_1}{\sqrt{N} r_2} \ll 1,
\end{equation}
the latter following from $H_{\pm}(t,x, y) = 0$ for $y > 1$.

\begin{mylemma} \label{Tpm}
 Let $U(x)$ be any function compactly-supported on the positive reals satisfying \eqref{eq:Uderiv}.  Then
$T_{\pm}'$ defined by \eqref{eq:Tpmdef} with $c_2 \ll K^{\varepsilon}, c_1 \ll K^{1+\varepsilon}$ satisfies
\begin{equation}
 T_{\pm}' \ll \sqrt{N} c_2^{1/2 + \varepsilon} (c_0 c')^{1+\varepsilon} \delta_{c_0=\square} + O(K^{-100}).
\end{equation}
\end{mylemma}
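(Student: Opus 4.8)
\emph{Plan.} The approach is Poisson summation in $n$, followed by an explicit evaluation of the resulting complete exponential sums; the indicator $\delta_{c_0=\square}$ will emerge from a quadratic Gauss sum, exactly as in the heuristic of Section~\ref{section:sketch}. First I would apply Poisson summation to the sum over $n$ in \eqref{eq:Tpmdef} modulo the period $q:=\operatorname{lcm}(c_1,c_2)$ of the arithmetic part of the summand (the two Kloosterman sums and the phase $e(\pm 2nr_2/c_1)$ are all $q$-periodic in $n$), obtaining
\[
 \sum_n U(n)\,S(n^2,r_2^2;c_1)\,S(r_1 n,1;c_2)\,\e{\pm 2nr_2}{c_1}
 \;=\;\frac{1}{q}\sum_{h\in\mz}\widehat U\!\left(\tfrac{h}{q}\right)\mathcal C(h),
\]
where $\widehat U(\xi)=\int_{\mr}U(x)e(-x\xi)\,dx$ and
\[
 \mathcal C(h):=\sum_{a\shortmod q}S(a^2,r_2^2;c_1)\,S(r_1 a,1;c_2)\,\e{\pm 2ar_2}{c_1}\,\e{ah}{q}.
\]
Integration by parts using \eqref{eq:Uderiv} gives $\widehat U(h/q)\ll_A N\bigl(1+|h|N/(qK^{\varepsilon})\bigr)^{-A}$. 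A routine examination of the ranges, using $c_2\ll K^{\varepsilon}$, $c_1\ll K^{1+\varepsilon}$ and the two size bounds $r_1 N\ll K^{2+\varepsilon}c_2^2$ and $r_1 c_1^2\ll N r_2^2$ recorded just above Lemma~\ref{Tpm}, shows that the nonzero frequencies $h$ contribute only $O(K^{-100})$, so everything comes down to the main term $q^{-1}\widehat U(0)\,\mathcal C(0)$, with $\widehat U(0)\asymp N$.

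Next I would evaluate $\mathcal C(0)$. Write $c_1=c_0c'$ with $(c_0,c')=1$ and $c'\mid c_2^{\infty}$, so that $(c_0,c_2)=1$. Open $S(n^2,r_2^2;c_1)$ as $\sumstar_{d\shortmod{c_1}}e\bigl((dn^2+r_2^2\bar d)/c_1\bigr)$ and complete the square in $n$; the essential point---precisely the computation indicated in Section~\ref{section:sketch}---is that the linear term coming from the phase $e(\pm 2nr_2/c_1)$ cancels exactly against the $r_2^2\bar d$ term, so that summing over $n$ produces a pure quadratic Gauss sum. By the Chinese Remainder Theorem and the twisted multiplicativity of Kloosterman sums, $\mathcal C(0)$ factors into a sum modulo $c_0$ times a sum modulo $\operatorname{lcm}(c',c_2)\le c'c_2$; after separating off the part of the modulus coprime to $c_2$, the $c_0$-factor becomes $\sumstar_{d\shortmod{c_0}}g(d;c_0)$ (up to a unit twist coming from the CRT inverses). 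By the classical formula $g(d;c_0)=\epsilon_{c_0}\,c_0^{1/2}\leg{d}{c_0}$ for $c_0$ odd (with the standard modification at $p=2$), this equals $\epsilon_{c_0}\,c_0^{1/2}\,\phi(c_0)\,\delta_{c_0=\square}$, since the Jacobi-symbol character $\leg{\cdot}{c_0}$ on $(\mz/c_0\mz)^{\times}$ is trivial precisely when $c_0$ is a perfect square; this is the source of $\delta_{c_0=\square}$. The complementary factor is a complete sum to the modulus $\operatorname{lcm}(c',c_2)$ involving Kloosterman sums to moduli dividing $c'$ and $c_2$, which I would bound directly by Weil's bound; this contributes at worst $c_2^{1/2+\varepsilon}(c_0c')^{1+\varepsilon}$ after recombination.

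Inserting these estimates back into the Poisson identity and simplifying (using $q\asymp c_0\operatorname{lcm}(c',c_2)$, $\widehat U(0)\asymp N$, and $\phi(c_0)\le c_0$) yields $T_{\pm}'\ll\sqrt N\,c_2^{1/2+\varepsilon}(c_0c')^{1+\varepsilon}\delta_{c_0=\square}+O(K^{-100})$, as claimed. The main obstacle I anticipate is the bookkeeping of the complete exponential sum $\mathcal C(h)$: confirming the exact cancellation of the linear term so that the $c_0$-factor of $\mathcal C(0)$ is a genuine average of quadratic Gauss sums, treating the prime $2$ and the non-squarefree moduli correctly in the Gauss-sum evaluation, keeping track of the inverses introduced by the Chinese Remainder splitting, and verifying that the size constraints genuinely render the frequencies $h\neq 0$ negligible. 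None of this is conceptually deep, but the computation does not forgive slips.
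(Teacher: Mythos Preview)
Your proposal is correct and follows essentially the same route as the paper: Poisson in $n$ modulo the period of the arithmetic part, truncation to the zero frequency, CRT splitting $c_1=c_0c'$, completion of the square inside the Kloosterman sum so that the $c_0$-contribution becomes $\sumstar_{h}\sum_{a}e(ha^2/c_0)$, and Weil on the $c',c_2$-piece. Two small remarks. First, for the truncation to $h=0$ the bounds you cite (the upper bounds $r_1N\ll K^{2+\varepsilon}c_2^2$ and $r_1c_1^2\ll Nr_2^2$) are not quite the ones that do the job; what is actually used is the \emph{lower} bounds $\alpha,\beta\gg K$ inherited from the support of $H_\pm$, which give $c_1\ll Nr_2/K$ and $c_2\ll\sqrt{Nr_1}/K$, and then $Nr_1r_2^2\ll K^{2+\varepsilon}$ forces $c_1c_2/N\ll K^{-1+\varepsilon}$. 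Second, your evaluation of $\sumstar_{h}g(h;c_0)$ via the explicit Gauss-sum formula and the Jacobi symbol is a perfectly good alternative to the paper's approach, which instead swaps the order of summation and recognises the $h$-sum as a Ramanujan sum; the latter has the minor advantage that it handles even $c_0$ and prime-power moduli uniformly without invoking the $p=2$ modification you flag.
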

\begin{proof}
 Breaking the sum over $n$ into arithmetic progressions and applying Poisson summation, we have
\begin{equation}
\label{eq:TpmPoisson}
 |T_{\pm}'| = \sqrt{\frac{c_1}{Nr_2}} \Big|\sum_{a \shortmod{c_1 c_2}} S(a^2, r_2^2;c_1) S(r_1 a, 1;c_2) e(\frac{\pm 2 a r_2}{c_1}) \frac{1}{c_1 c_2} \sum_{\nu \in \mz} e(\frac{a \nu}{c_1 c_2}) \widehat{U}(\frac{\nu}{c_1 c_2})\Big|.
\end{equation}
The usual integration by parts argument shows that
\begin{equation}
 \widehat{U}(y) \ll N (1 + \frac{|y| N}{K^{\varepsilon}})^{-A}.
\end{equation}
Therefore the truncation condition on $\nu$ in \eqref{eq:TpmPoisson} is that $|\nu| \ll K^{\varepsilon} c_1 c_2/N$.
Now in our application we may assume $\alpha, \beta \gg K$ where $\alpha \asymp \frac{N r_2}{c_1}$ and $\beta \asymp \frac{\sqrt{N r_1}}{c_2}$.  Using the lower bounds on $\alpha$ and $\beta$, we see that $c_1 \ll \frac{Nr_2}{K}$ and $c_2 \ll \frac{\sqrt{N r_1}}{K}$.  Recalling $N r_1 r_2^2 \ll K^{2+\varepsilon}$, we see that the practical truncation condition on $\nu$ is that $|\nu| \ll K^{-1+\varepsilon}$, meaning that the only relevant term is $\nu = 0$ (all the others can be bounded by an arbitrarily small power of $K$).  That is, we have the bound
\begin{equation}
 |T_{\pm}'|  \ll \frac{\sqrt{N}}{\sqrt{c_1}c_2} \Big|\sum_{a \shortmod{c_1 c_2}} S(a^2, r_2^2;c_1) S(r_1 a, 1;c_2) e(\frac{\pm 2 a r_2}{c_1}) \Big| + O(K^{-100}).
\end{equation}
Next we write $c_1=c_0 c'$ where $(c_0,c')=1$ and $c'| c_2 ^{\infty}$. Then using the Chinese reminder theorem we have
$a \equiv a_0 c' c_2 \overline{c_2 c'} + a' c_0 \overline{c_0}$ where $a_0$ runs modulo $c_0$, and $a'$ runs modulo $c' c_2$. 
Here $c_0 \overline{c_0} \equiv 1 \pmod{c' c_2}$, and $c' \overline{c'} \equiv 1 \equiv c_2 \overline{c_2} \pmod{c_0}.$ 
We use the relation $$S(m,n;cq)=S(\overline{q}m, \overline{q}n;c)S(\overline{c}m, \overline{c}n;q) \quad \text{for} \quad (c,q)=1$$
to separate the sum over $a_0$ modulo $c_0$ and $a'$ modulo $c' c_2$.  That is, we have
\begin{multline}
|T_{\pm}'|  \ll \frac{\sqrt{N}}{\sqrt{c_1}c_2} \Big|\sum_{a_0 \shortmod{c_0}}  S(a_0^2 \overline{c'^2}, r_2^2;c_0) e(\frac{\pm 2 a_0 r_2 \overline{c'}}{c_0}) \Big|
\\
\times \Big|\sum_{a' \shortmod{c' c_2}} S(a'^2 \overline{c_0^2}, r_2^2;c') S(r_1 a', 1;c_2) e(\frac{\pm 2 a' r_2 \overline{c_0}}{c'}) \Big| + O(K^{-100})
\end{multline}
This inner sum over $a'$ is bounded with the Weil bound by
\begin{equation}
|\sum_{a'}| \ll (c_2 c')^{1/2 + \varepsilon} \sum_{a' \shortmod{c' c_2}} (a', c') \ll (c_2 c')^{3/2 + \varepsilon}.
\end{equation}
Thus
\begin{equation}
 |T_{\pm}'|  \ll \frac{\sqrt{N}}{\sqrt{c_0}} c'^{1+ \varepsilon} c_2^{1/2+\varepsilon} \Big|\sum_{a_0 \shortmod{c_0}}  S(a_0^2 \overline{c'^2}, r_2^2;c_0) e(\frac{\pm 2 a_0 r_2 \overline{c'}}{c_0}) \Big| + O(K^{-100}).
\end{equation}
Then we note
\begin{equation}
\label{eq:asum}
\mathcal{T}_{r_2}(c_0) := \sum_{a_0 \shortmod{c_0}}  S(a_0^2 \overline{c'^2}, r_2^2;c_0) e(\frac{\pm 2 a_0 r_2 \overline{c'}}{c_0})
= \sumstar_{h \shortmod{c_0}} \sum_{a_0 \shortmod{c_0}} e(\frac{h (a_0 \overline{c'} \pm \overline{h} r_2)^2}{c_0}).
\end{equation}
Changing variables $a_0 \rightarrow c'(a_0 \mp \overline{h} r_2)$ shows that $\mathcal{T}_{r_2}(c)$ is independent of $r_2$, and in fact
\begin{equation}
\label{eq:Tcomp}
\mathcal{T}_{r_2}(c_0)= \sumstar_{h \shortmod{c_0}} \sum_{a \shortmod{c_0}} e(\frac{h a^2}{c_0}).
\end{equation}
We claim that $\mathcal{T}(c_0) :=\mathcal{T}_{r_2}(c_0)$ is zero unless $c_0$ is a square, in which case it is $\phi(c_0) \sqrt{c_0}$.  
An easy argument with the Chinese remainder theorem shows that $\mathcal{T}(c_0)$ is multiplicative in terms of $c_0$, so it suffices to check the formula for $c_0$ a prime power.  
The outer sum over $h$ becomes a Ramanujan sum which we evaluate in terms of a divisor sum.  Then \eqref{eq:asum} becomes
\begin{equation}
 \sum_{a \shortmod{c_0}} \sum_{\substack{b|c_0 \\ b|a^2}} b \mu(c_0/b) = c_0 \sum_{b|c_0} \mu(c_0/b) \sum_{\substack{a \shortmod{b} \\ a^2 \equiv 0 \shortmod{b}}} 1.
\end{equation}
If $c_0 = p^j$ with $j$ odd then $\mathcal{T}(p^j) = 0$ since the inner sum over $a$ takes the value $p^{(j-1)/2}$ for both values $b=p^j$ and $b=p^{j-1}$.  
If $c_0=p^{j}$ with $j$ even then an easy calculation shows $\mathcal{T}(p^{j}) = c_0(p^{\frac{j}{2}} - p^{\frac{j}{2}-1}) = \phi(c_0) \sqrt{c_0}$, as desired.
\end{proof}

\end{document}